\newtheorem{theo}{Th\'eor\`eme}[section]
\newtheorem{deftn}{Definition}[section]
\newtheorem{prop}{Proposition}[section]
\newtheorem{cor}{Corollaire}[section]
\newtheorem{lem}{Lemme}[section]
\title{Construction d'un complexe diff\'erentiel pour des modules de Speh
$\theta$-invariants}
\author{Nicol\'as Arancibia Robert}
\date{\vspace{-5ex}}
\begin{document}

\maketitle
\begin{abstract}
Soit $\pi$ un module de Speh de $\mathbf{GL}(2n,\mathbb{R})$ basé sur une série discrète de $\textbf{GL}(2,\mathbb{R})$.
Cet article a pour objectif de construire un complexe différential  pour $\pi$ par de sommes de modules standards auto-duals,
\begin{align}
0\rightarrow \pi\rightarrow X_{0}\rightarrow \cdots\rightarrow X_{i}\xrightarrow{\phi_{i}} X_{i+1}\rightarrow\cdots\rightarrow 0.\end{align}
Les modules standard apparaissant dans \textbf{(1)} sont les modules standard auto-duals intervenant dans la résolution de Johnson de $\pi$,
ils sont paramétrés par l'ensemble des involutions $\mathfrak{I}_{n}$ du groupe symétrique $\mathfrak{S}_{n}$. 
Via le paramétrage précédent, il est possible de montrer que l'ordre de Bruhat inversé sur $\mathfrak{I}_{n}$, co\"incide
avec l'ordre de Vogan défini sur l'ensemble des représentations irréductibles de $\mathbf{GL}(\mathbb{R})$. 
De ce résultat nous ramenons la construction de \textbf{(1)} 
à l'étude des propriétés combinatoires de l'ordre de Bruhat sur $\mathfrak{I}_{n}$. 

Dans le dernier chapitre nous montrons que le complexe différentiel \textbf{(1)}, pour $n\leq 4$,
est $\theta$-exact, c'est-à-dire tel que la trace tordue de $\ker\phi_{i+1}/\text{im}\phi_{i}$ est nulle.
Ceci nous permet d'écrire la trace tordue de $\pi$ comme combinaison linéaire de traces tordues 
de modules standard auto-duals, ce qui implique pour $\pi$ un des principaux résultats de l'article, 
\textit{Paquets d'Arthur des Groupes classiques et unitaires} \textbf{\cite{AMR}}.
\end{abstract}
\selectlanguage{english}
\begin{abstract}
Let $\pi$ be a Speh module of $\mathbf{GL}(2n,\mathbb{R})$ based on a discrete series of $\textbf{GL}(2,\mathbb{R})$.
The aim of this paper is to build a chain complex of $\pi$ by direct sum of auto-duals standard modules,
\begin{align}
0\rightarrow \pi\rightarrow X_{0}\rightarrow \cdots\rightarrow X_{i}\xrightarrow{\phi_{i}} X_{i+1}\rightarrow\cdots\rightarrow 0.\end{align}
The standard modules of \textbf{(2)} are the auto-duals standard modules which occurs in the Johnson's resolution of $\pi$,
they are parameterized by the set of involutions $\mathfrak{I}_{n}$ of the symmetric group $\mathfrak{S}_{n}$. 
Under this parametrization one can show that the inversion of the Bruhat order in $\mathfrak{I}_{n}$ coincide
with the Vogan order defined over the set of irreducible representations of $\mathbf{GL}(\mathbb{R})$. 
This allows us to reduce the construction of \textbf{(2)} to the study of combinatorial properties 
of the Bruhat order on $\mathfrak{I}_{n}$. 

In the last chapter we show that the chain complex \textbf{(2)}, for $n\leq 4$,
is $\theta$-exact i.e. the twisted trace of $\ker\phi_{i+1}/\text{im}\phi_{i}$ is trivial.
This allows us to write the twisted trace of $\pi$ as a linear combination of twisted 
traces of standard modules, wich implies for $\pi$ one of the the main results of the paper, 
\textit{Paquets d'Arthur des Groupes classiques et unitaires} \textbf{\cite{AMR}}.
\end{abstract}
\selectlanguage{french}
\tableofcontents
\section{Introduction}

Soit $\mathbf{G}$ un groupe alg\'ebrique r\'eductif d\'efini sur 
$\mathbb{R}$, notons par $\mathfrak{g}$  la complexification de l'alg\`ebre de Lie
de $\mathbf{G}$ et fixons une involution de Cartan $\tau$ de $\mathbf{G}(\mathbb{R})$
de sorte que le sous-groupe $K=\mathbf{G}(\mathbb{R})^{\tau}$ soit un compact maximal de $\mathbf{G}(\mathbb{R})$.

Les repr\'esentation irr\'eductibles et unitaires de $\mathbf{G}(\mathbb{R})$ ayant de la $(\mathfrak{g},K)$-cohomologie non
triviale ont \'et\'e classifi\'e par Vogan et Zuckerman. Elles sont obtenues de la mani\`ere suivante.
Soit $\mathfrak{q}$ une sous-alg\`ebre parabolique $\tau$-stable de $\mathfrak{g}$ de d\'ecomposition 
de Levi $\mathfrak{q}=\mathfrak{l}\oplus\mathfrak{u}$ avec $\mathfrak{l}$ d\'efinie sur $\mathbb{R}$ 
et stable par $\tau$. Posons $\mathbf{L}=N_{\mathbf{G}}(\mathfrak{l})$. Soit $\lambda$ un caract\`ere
unitaire de $\mathbf{L}(\mathbb{R})$ et $H$ un sous-groupe de Cartan de $\mathbf{L}(\mathbb{R})$.
Notons $d\lambda$ la diff\'erentielle de $\lambda|_{H}$ et supposons que la condition de positivit\'e
suivante soit v\'erifi\'ee,
\begin{align}\label{eq:carctcohomo}
\text{Re}(\left<\alpha,d\lambda\right>)\geq 0,\quad \alpha\in\Delta(\mathfrak{h},\mathfrak{u}).
\end{align}
Alors pour tout caract\`ere unitaire $\lambda$ de $\mathbf{L}(\mathbb{R})$, v\'eerifiant \textbf{(\ref{eq:carctcohomo})}, 
posons,
\begin{align}\label{eq:formulerepcoho}
A_{\mathfrak{q}}(\lambda)=\mathscr{R}_{\mathfrak{q}}^{S}(\lambda).
\end{align}
o\`u $\mathscr{R}_{\mathfrak{q}}^{i}$ d\'esigne le $i$-eme foncteur d'induction cohomologique et $S=\dim(\mathfrak{u}\cap\mathfrak{k})$
La repr\'esentation $A_{\mathfrak{q}}(\lambda)$ est unitaire et avec de la $(\mathfrak{g},K)$-cohomologie non
triviale. De plus toute repr\'esentation irr\'eductible et unitaire ayant de la 
$(\mathfrak{g},K)$-cohomologie non triviale en est obtenue par
la formule \textbf{(\ref{eq:formulerepcoho})} apr\`es induction cohomologique d'un caract\`ere 
$\lambda$ de $\mathbf{L}(\mathbb{R})$ v\'eerifiant \textbf{(\ref{eq:carctcohomo})}. 

Fixons une repr\'esentation irr\'eductible et unitaire $\pi$ de $\mathbf{G}(\mathbb{R})$,
ayant de la $(\mathfrak{g},K)$-cohomologie non triviale. Soit $\lambda$ 
un caract\`ere unitaire de $\mathbf{L}(\mathbb{R})$ tel que 
$\pi=A_{\mathfrak{q}}(\lambda)$. L'ensemble $\Pi_{\lambda}$
des repr\'esentation irr\'eductibles de $\mathbf{L}(\mathbb{R})$ 
ayant un caract\`ere infinit\'esimal \'egal \`a celle de $\lambda$ est parametre
par l'ensemble de param\`etres de Beilison Bernsetein $\mathscr{P}_{\lambda}$.
Dans cette parametrization \`a chaque param\`etre $\gamma\in \mathscr{P}_{\lambda}$, Beilinson et Bernstein 
associent un module standard $X^{L}(\gamma)$. Ce module standard poss\`ede la propri\'et\'e de ne contenir
qu'un seul sous-module irr\'eductible que l'on note $\overline{X^{L}(\gamma)}$. La bijection 
\'etablissant la classification est,
\begin{align*}
\gamma\in\mathscr{P}_{\lambda}\mapsto \overline{X^{L}(\gamma)}\in\Pi_{F}(\mathbf{L}(\mathbb{R})). 
\end{align*}
L'ensemble $\mathscr{P}_{\lambda}$ est muni 
d'un ordre partiel $\leq$ appel\'e $\mathcal{G}$-ordre de Bruhat ainsi que d'une fonction longueur, $l$.

Dans \textbf{\cite{Johnson}} J. Johnson donne une r\'esolution de $\lambda$ 
en termes des modules standard. Cette r\'esolution est un complexe exact
de profondeur $q(L)=\frac{1}{2}(\text{dim}(\mathbf{L}(\mathbb{R}))-\text{dim}(\mathbf{L}(\mathbb{R})\cap K))-c_{0}$ ($c_{0}$ la moitié de la dimension de la partie
déployeé d'un sous-groupe Cartan fondamental de $\mathbf{L}(\mathbb{R})$),
\begin{align}\label{eq:suiteJohnsoncaract}
0\rightarrow \lambda\rightarrow \cdots\rightarrow X_{i-1}^{L}\rightarrow X_{i}^{L}\rightarrow\cdots\rightarrow X_{q(L)}^{L}\rightarrow 0,
\end{align}
o\`u $X_{0}^{L}=X^{L}(\gamma_{0}),~\gamma_{0}\in\mathscr{P}_{\lambda}$ est le module standard dont $\lambda$ est l'unique sous module 
irr\'eductible, et pour $i\geq1$, $X_{i}^{L}$ est la somme directe des modules standard $X^{L}(\gamma),~\gamma\in\mathscr{P}_{\lambda}$,
tels que $\gamma\leq\gamma_{0}$ et $l(\gamma)=l(\gamma_{0})-i$. 

En utilisant le foncteur d'induction cohomologique $\mathscr{R}_{\mathfrak{q}}^{S}$ sur \textbf{(\ref{eq:suiteJohnsoncaract})} J. Johnson dans \textbf{\cite{Johnson}} construit pour $\pi=A_{\mathfrak{q}}(\lambda)$ 
une r\'esolution,
\begin{align}\label{eq:resolutionspehintro}
0\rightarrow \pi\rightarrow \cdots\rightarrow X_{i-1}\rightarrow X_{i}\rightarrow\cdots\rightarrow X_{q(L)}\rightarrow 0.
\end{align}
o\`u pour tout $i\in[0,q(L)]$,
$$X_{i}=\oplus_{\gamma\in\mathscr{P}_{\lambda},~l(\gamma)=q(G)-i}\mathscr{R}_{\mathfrak{q}}^{S}(X^{L}(\gamma))$$ 
et, 
\begin{align*}
\phi_{j}&:X_{j-1}\longrightarrow X_{j},\\
\phi_{j}&=\oplus_{\gamma'\in\mathscr{P}_{\lambda},~l(\gamma')=q(G)-j+1}\oplus_{\gamma\in\mathscr{P}_{\lambda},~l(\gamma)=q(G)-j}\phi_{\gamma,\gamma'},\quad
\phi_{\gamma,\gamma'}^{J}:X(\gamma')\longrightarrow X(\gamma)
\end{align*}
Le morphisme $\phi_{\gamma,\gamma'}^{L}$ on l'appele le morphisme de Johnson entre $X(\gamma')$ et $X(\gamma)$.\\
 
Consid\'erons à présent le groupe lin\'eaire  $\mathbf{GL}(N),~N=2n$. Soit $J_{N}\in\mathbf{GL}(N,\mathbb{R})$ la matrice antidiagonale,
\begin{equation}\label{eq:mat}
J_{N}=\left(\begin{array}{cccc}
     ~ &~ &~ &1\\
      ~ &~   &-1  &~\\
  ~&{\mathinner{\mkern2mu\raise1pt\hbox{.}\mkern2mu
\newline \raise4pt\hbox{.}\mkern2mu\raise7pt\hbox{.}\mkern1mu}}&~ &~\\
(-1)^{N+1}&~&~&~
    \end{array}\right).
\end{equation}
Sur $\mathbf{GL}(N)$ d\'efinissons l'automorphisme involutif, $g\mapsto J_{N}^{-1}(^{t}g^{-1})J_{N}$.

Soit $\delta$ une s\'erie discr\`ete de $\mathbf{GL}(2,\mathbb{R})$ de caract\`ere infinit\'esimal
$(p/2,-p/2)$. Consid\'erons le module standard, 
\begin{align}\label{eq:modulestandardSpeh}
\delta\nu^{-n+1}\times\cdots\times\delta\nu^{n-1},\quad \nu=\left|\det\right|.
\end{align}
L'unique sous module irr\'eductible de \textbf{(\ref{eq:modulestandardSpeh})} 
on l'appel le module de Speh de $\mathbf{GL}(2n,\mathbb{R})$ bas\'e sur $\delta$, on le note $\textbf{Speh}(\delta,n)$.

Le module de Speh, $\textbf{Speh}(\delta,n)$, est une repr\'esentation unitaire admettant une $(\mathfrak{g},K)$-cohomologie 
non triviale. Elle dispose en cons\'equence d'une r\'esolution de Johnson.
Dans cette r\'esolution l'ensemble des modules standard intervenant est param\`etre par le groupe symetrique $\mathfrak{S}_{N/2}$. 
Celle-ci est un groupe de Coxeter,  qui est donc muni d'une fonction longueur $l_{\mathfrak{S}}$ 
et d'un ordre $\leq_{B}$. Dans cette param\'etrisation \`a tout $s\in\mathfrak{S}_{N/2}$ on associe,
\begin{align*}
X(s)=\times_{i=1}^{n}\delta\left(\frac{-p-(n-1)}{2}+(i-1),\frac{p-(n-1)}{2}+(s(i)-1)\right), 
\end{align*}
et pour tout $i\in\{1,\cdots,\frac{n(n-1)}{2}\}$ le module $X_{i}$ de \textbf{(\ref{eq:resolutionspehintro})} est d\'efinit comme
la somme directe des modules standard $X(s)$ avec $l_{\mathfrak{S}}(s)=i$.

On dit d'une repr\'esentation $\pi$ de $\mathbf{GL}(N,\mathbb{R})$ qu'elle est $\theta_{N}$-invariante si $\pi$ est \'equivalente \`a 
$\pi\circ\theta_{N}$. La repr\'esentation $\textbf{Speh}(\delta,n)$ est $\theta_{N}$-invariante, de plus,
pour tout $s\in\mathfrak{S}_{N}$ le module standard $X(s)$  
est $\theta_{N}$-invariant si et seulement si $s$ appartient \`a $\mathfrak{I}_{n}$, l'ensemble des involutions de
$\mathfrak{S}_{n}$. Munissons $\mathfrak{I}_{n}$ de l'ordre induit par $\leq_{B}$.
Sur $\mathfrak{I}_{n}$ on peut aussi d\'efinir une fonction longueur, que l'on note, $l_{\mathfrak{I}}$. 
L'objectif du pr\'esent article est de prouver,
\begin{theo}\label{theo:intro}
Soit $s_{\text{max}}$ l'élément maximal de $\mathfrak{S}_{n}$. 
Pour tout $i\in\{1,\cdots,l_{\mathfrak{I}}(s_{\text{max}})\}$ notons,
\begin{align*}
X_{i,\theta}=\oplus_{s\in\mathfrak{I}_{n},l_{\mathfrak{I}}(s)=i}X(s).
\end{align*}
Alors il existe pour tout $i$, une fl\`eche, 
\begin{align*}
X_{i,\theta}\rightarrow X_{i+1,\theta}
\end{align*}
de fa\c con \`a ce que la suite,
\begin{align}\label{eq:complexetheointro}
0\rightarrow \pi\rightarrow \cdots\rightarrow X_{i-1,\theta}\rightarrow X_{i,\theta}\rightarrow\cdots\rightarrow X_{l_{\mathfrak{I}}(s_{\text{max}}),\theta}\rightarrow 0,
\end{align}
d\'efinisse un complexe diff\'erentiel.
\end{theo}
Le complexe \textbf{(\ref{eq:complexetheointro})} n'est certainement pas exact, mais on aimerait bien pouvoir montrer 
qu'il est $\theta_{N}$-exacte, c'est-\`a-dire tel que la trace tordue des noyaux modulo les images 
des fl\`eches soit nulle.
Pour le moment on a reussi a prouver la $\theta_{N}$-exactitude de \textbf{(\ref{eq:complexetheointro})} 
pour $\textbf{Speh}(\delta,n)$ avec $n$ plus petit ou \'egal a 4; $n=4$ n'est pas un blocage, 
les m\'ethodes employ\'es pourraient certainement donner aussi $n=5$ mais il faudrait une id\'ee suplementaire
pour traiter le cas g\'en\'eral.

Sur chaque module standard consid\'er\'e, on a une action de $\theta_{N}$ normalis\'ee 
par le choix d'un mod\`ele de Whittaker comme explique par J. Arthur. Ainsi chaque
$X_{i}$ a une action de $\theta_{N}$ not\'ee $A_{i}(\theta_{N})$.  

Une cons\'equence de la $\theta$-exactitude de \textbf{(\ref{eq:complexetheointro})} 
est la obtention dans le groupe de Grothendieck des repr\'esentation tordues de $\textbf{GL}(N,\mathbb{R})$ 
de l'identit\'e,
\begin{align*}
\textbf{Speh}(\delta,n)=\sum_{i=0}^{l_{\mathfrak{I}}(s_{\text{max}})} (-1)^{i}\text{Tr}(X_{i}A_{i}(\theta_{N})).
\end{align*}
Cette \'egalit\'e entre \'el\'ements du groupe de Grothendieck 
est obtenue diff\'eremment dans \textbf{\cite{AMR}}, voir th\'eor\`emes \textbf{(9.5)} et \textbf{(9.7)} pour une preuve.
La m\'ethode utilis\'ee dans \textbf{\cite{AMR}} se base sur le calcul de l'action de l'automorphisme exterieur $\theta_{N}$ de $\mathbf{GL}(N)$
sur le complexe de Johnson  qui r\'esout $\textbf{Speh}(\delta,n)$.

Parlons un peut sur la preuve du th\'eor\`eme \textbf{(\ref{theo:intro})}. 
Pour toute paire $s,s'\in\mathfrak{I}_{n}$ v\'erifiant $s'\leq_{B}s$ et $l_{\mathfrak{I}}(s')=l_{\mathfrak{I}}(s)-1$,
soit $s'=s_{0}<s_{1}<\cdots<s_{m-1}<s_{m}=s'$, avec $l_{\mathfrak{S}}(s_{i})=l_{\mathfrak{S}}(s_{i+1})-1$,
un cha\^ine d'éléments de $\mathfrak{S}_{n}$ aillent de $s'$ vers $s$. Alors pour $s,s'\in\mathfrak{I}_{n}$ avec 
$s'\leq_{B}s$ et $l_{\mathfrak{I}}(s')=l_{\mathfrak{I}}(s)-1$ nous allons définir,  
$$\phi_{s,s'}^{J}=\phi_{s,s_{m-1}}^{J}\circ\cdots\circ\phi_{s_{1},s'}^{J}.$$
On fait noter que par de propriétés de l'ordre de Bruhat sur $\mathfrak{S}_{n}$, la définition de $\phi_{s,s'}^{J},~s,s'\in\mathfrak{I}_{n}$
ne dépend pas du chemin choisit. 

Si maintenant nous considérons un couple $s,s'\in\mathfrak{I}_{n}$ v\'erifiant $s'\nleq_{B}s$ et $l_{\mathfrak{I}}(s')=l_{\mathfrak{I}}(s)-1$, alors 
nous allons définir $\phi_{s,s'}^{J}$ comme le morphisme nul.
 
Prenons pour tout $s\in\mathfrak{I}_{n}$ une fonctionelle de Whittaker $\Omega_{s}$ 
du module standard $X(s)$.  
Pour toute paire $s,s'\in\mathfrak{I}_{n}$ v\'erifiant $s'\leq_{B}s$ et $l_{\mathfrak{I}}(s')=l_{\mathfrak{I}}(s)-1$ la fonction
 $\phi_{s,s'}^{J}$ est surjective, la composition $\Omega_{s}\circ\phi_{s,s'}^{J}$ est en conséquence une fonctionelle de Whittaker non nulle pour 
$X(s')$, et de l'unicité multiplication par un scalaire près de la fonctionelle de Whittaker, il existe un scalaire $C\in\mathbb{C}^{\ast}$ de manière à ce que, 
$\Omega_{s'}=\Omega_{s}\circ C\phi_{s,s'}$.  Nous allons donc pour tout couple $s,s'\in\mathfrak{I}_{n}$ tel que 
$s'\leq_{B}s$ et $l_{\mathfrak{I}}(s')=l_{\mathfrak{I}}(s)-1$, définir,
\begin{align*}
\phi_{s,s'}^{wh}=C\phi_{s,s'}^{J}.
\end{align*}
Si au contraire $s,s'\in\mathfrak{I}_{N}$ est tel que $s'\nleq_{B}s$ et $l_{\mathfrak{I}}(s')=l_{\mathfrak{I}}(s)-1$, alors nous allons 
définir $\phi_{s,s'}^{J}$ comme le morphisme nul. 

Par de propriétés combinatoires de l'ordre de Bruhat sur $\mathfrak{S}_{n}$ et $\mathfrak{I}_{n}$ on peut montrer: 
\begin{prop}\label{prop:d2ltintroduction}
Soient $s,s'\in\mathfrak{I}_{n}$ v\'erifiant $s\leq_{B}s'$ et $l_{\mathfrak{I}}(s)=l_{\mathfrak{I}}(s')-2$. Alors
ils existent exactement deux \'el\'ements $s_{1}$ et $s_{2}$ dans $\mathfrak{I}_{n}$ de longueur $l_{\mathfrak{I}}(s)+1$
tels que,
\begin{align*}
s<_{B}s_{1},~s_{2}<_{B}s'. 
\end{align*}
De plus,
$$\phi_{s,s_{1}}^{{wh}}\circ \phi_{s_{1},s'}^{{wh}}=\phi_{s,s_{2}}^{{wh}}\circ \phi_{s_{2},s'}^{{wh}}.$$
\end{prop}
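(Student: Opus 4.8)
The plan is to separate the statement into two independent assertions: first, the purely order-theoretic claim that the Bruhat interval $[s,s']$ in $\mathfrak{I}_n$ has exactly two elements of intermediate $\mathfrak{I}$-length when $l_{\mathfrak{I}}(s) = l_{\mathfrak{I}}(s') - 2$; and second, the commutativity of the resulting square of Whittaker-normalized Johnson morphisms. For the first part I would invoke the known structure of the Bruhat order restricted to involutions of the symmetric group — specifically the fact (due to Incitti, building on Richardson–Springer) that $\mathfrak{I}_n$ with the order induced from $\leq_B$ is a graded poset whose rank function is a normalization of $l_{\mathfrak{S}}$ restricted to involutions, and that it is in fact a lattice-like poset in which every length-two interval is "thin," i.e. has exactly two elements strictly between its endpoints. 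I would state this as the diamond (or $\mathrm{Z}$) property for the involution Bruhat order and cite it; the reduction to this fact is the point of having transported everything to the combinatorics of $\mathfrak{I}_n$ in the earlier sections.

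Second, given the two intermediate involutions $s_1, s_2$, I would establish the commutativity $\phi^{wh}_{s,s_1}\circ\phi^{wh}_{s_1,s'} = \phi^{wh}_{s,s_2}\circ\phi^{wh}_{s_2,s'}$ in two steps. Step one: commutativity up to a nonzero scalar. Both composites are morphisms $X(s') \to X(s)$; by the analysis of Johnson morphisms in the earlier part of the paper (and the fact that $\mathrm{Hom}$ between these standard modules, in the relevant length configuration, is at most one-dimensional — a consequence of the multiplicity-one structure of the Johnson resolution and of the fact that each $X(s)$ has a unique irreducible submodule), there is $c \in \mathbb{C}^{\ast}$ with $\phi^{wh}_{s,s_1}\circ\phi^{wh}_{s_1,s'} = c\,\phi^{wh}_{s,s_2}\circ\phi^{wh}_{s_2,s'}$. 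Step two: $c = 1$, which is exactly where the Whittaker normalization is used. Fix the Whittaker functional $\Omega_{s'}$ on $X(s')$ and $\Omega_s$ on $X(s)$. Because each $\phi^{wh}_{t,u}$ was defined precisely so that $\Omega_u = \Omega_t \circ \phi^{wh}_{t,u}$ on adjacent pairs, composing along the path $s' \to s_1 \to s$ gives $\Omega_{s'} = \Omega_s \circ (\phi^{wh}_{s,s_1}\circ\phi^{wh}_{s_1,s'})$, and along $s' \to s_2 \to s$ gives $\Omega_{s'} = \Omega_s \circ (\phi^{wh}_{s,s_2}\circ\phi^{wh}_{s_2,s'})$; subtracting and using that $\Omega_s$ is nonzero on the image (the composites are surjective, being composites of the surjective $\phi^{wh}$'s along covering relations in $\mathfrak{I}_n$ that refine to covering relations in $\mathfrak{S}_n$) forces $(1-c)\,\Omega_s \circ \phi^{wh}_{s,s_2}\circ\phi^{wh}_{s_2,s'} = 0$, hence $c = 1$.

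One technical point requires care and is, I expect, the main obstacle: the morphisms $\phi^{wh}_{s,s'}$ for $s,s' \in \mathfrak{I}_n$ with $l_{\mathfrak{I}}(s') = l_{\mathfrak{I}}(s) - 1$ are defined by composing Johnson morphisms along a chain $s' = s_0 < s_1 < \cdots < s_m = s$ inside $\mathfrak{S}_n$ (not inside $\mathfrak{I}_n$), and one must know both that this composite is path-independent — which the excerpt asserts follows from combinatorial properties of the Bruhat order on $\mathfrak{S}_n$, i.e. from the well-definedness of the Johnson differential in the ambient $\mathbf{GL}(N)$ resolution — and that the scalar $C$ relating $\Omega_{s'}$ to $\Omega_s \circ \phi^{J}_{s,s'}$ is genuinely nonzero and canonically determined, so that the two length-one pieces of each side of the square are the same maps appearing in the adjacent-pair definition. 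I would therefore first record carefully that $\phi^{wh}$ satisfies the cocycle-type relation $\phi^{wh}_{s,s''}\circ\phi^{wh}_{s'',s'} $ equals the directly-defined $\phi^{wh}_{s,s'}$ whenever the three involutions sit in a chain of consecutive $\mathfrak{I}$-lengths — this is what licenses telescoping $\Omega$ along the two sides of the diamond — and only then run the scalar-matching argument above. The surjectivity claim also deserves a line: it is needed to conclude $c=1$ and follows because a composite of surjections is surjective, each $\phi^{wh}$ along an $\mathfrak{S}_n$-covering being surjective by construction of the Johnson resolution.
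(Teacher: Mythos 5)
Your decomposition into a combinatorial claim plus a morphism-commutativity claim matches the paper's structure, and your treatment of the combinatorial part — invoking Incitti's EL-shellability of the involution Bruhat poset to get the diamond property — is exactly what the paper does (Lemma \ref{lem:d2lt}, citing \cite{Incitti} and \cite{AMR}). For the commutativity, however, your two-step argument diverges from the paper and has a genuine gap.

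Your Step one rests on $\dim\mathrm{Hom}(X(s'),X(s))\le 1$ to get proportionality of the two composites. The paper only establishes this multiplicity-one property (Proposition 2.1, Lemma \ref{lem:defmorsection2.5}) when the $\mathfrak{S}$-length difference is at most $1$. Here $l_{\mathfrak{I}}(s)-l_{\mathfrak{I}}(s')=2$, and by Lemma \ref{lem:long} each $\mathfrak{I}$-covering step can have $\mathfrak{S}$-length jump up to $3$, so $l_{\mathfrak{S}}(s)-l_{\mathfrak{S}}(s')$ can be as large as $6$. In that range the relevant Kazhdan--Lusztig--Vogan polynomial can evaluate to more than $1$ at $q=1$, so multiplicity-one — hence the one-dimensionality of the Hom space — is not guaranteed, and Step one is unjustified as stated.

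The "technical point" you flag at the end is in fact the whole proof, and making it precise would render Steps one and two superfluous. By Definition \ref{eq:defetape2}, $\phi^{wh}_{s,s_i}$ and $\phi^{wh}_{s_i,s'}$ are each themselves compositions of adjacent $\phi^{wh}$'s along saturated chains in $\mathfrak{S}_n$; therefore $\phi^{wh}_{s,s_1}\circ\phi^{wh}_{s_1,s'}$ and $\phi^{wh}_{s,s_2}\circ\phi^{wh}_{s_2,s'}$ are both compositions along saturated $\mathfrak{S}_n$-chains from $s'$ to $s$ of the same length. The paper's Lemma \ref{lem:icc} — proved from Edelman's EL-shellability of $\mathfrak{S}_n$ together with the fact that the Johnson complex is a differential complex, applied inductively over adjacent chains in the shelling order — asserts precisely that all such compositions are equal. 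The proposition then follows in one line, with no appeal to Hom-dimensions or to a Whittaker-functional scalar-matching argument. Your Whittaker telescoping is correct as far as it goes (it does give $\Omega_s\circ\phi^{wh}_{s,s_1}\circ\phi^{wh}_{s_1,s'}=\Omega_{s'}=\Omega_s\circ\phi^{wh}_{s,s_2}\circ\phi^{wh}_{s_2,s'}$), but from equality after postcomposition with a single linear functional you cannot recover equality of the maps without already knowing they are proportional — which is exactly the unproved Step one.
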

Par cons\'equent si pour toute paire 
$l,l'\in \mathfrak{I}_{n}$ v\'erifiant $l'<_{B}l$ et $l_{\mathfrak{I}}(l)-l_{\mathfrak{I}}(l')=1$
nous prennons une constante $\lambda_{l,l'}\in\mathbb{C}$ et notons,
\begin{align*}
\phi_{l,l'}^{\lambda}=\lambda_{l,l'}\phi^{{wh}}_{l,l'}, 
\end{align*}
alors pour tout couple $s,s'\in \mathfrak{I}_{N}$ avec $s'<_{B}s$ 
et $l_{\mathfrak{I}}(s)-l_{\mathfrak{I}}(s')=2$ on aura,
\begin{align}\label{eq:ccdintro}
\phi_{s,s_{1}}^{\lambda}\circ \phi_{s_{1},s'}^{\lambda}+\phi_{s,s_{2}}^{\lambda}\circ \phi_{s_{1},s}^{\lambda}
&=\lambda_{s,s_{1}}\phi_{s,s_{1}}^{{wh}}\circ \lambda_{s_{1},s'}\phi_{s_{1},s'}^{{wh}}+
\lambda_{s,s_{2}}\phi_{s,s_{2}}^{{wh}}\circ \lambda_{s_{2},s'}\phi_{s_{1},s'}^{{wh}}\nonumber\\
&=(\lambda_{s,s_{1}}\cdot\lambda_{s_{1},s'}+\lambda_{s,s_{2}}\cdot\lambda_{s_{2},s'})\left[\phi_{s,s_{1}}^{{wh}}\circ \phi_{s_{1},s'}^{{wh}}=\phi_{s,s_{2}}^{{wh}}\circ \phi_{s_{2},s'}^{{wh}}\right],
\end{align}
avec $s_{1}$ et $s_{2}$ la paire d'\'el\'ements donn\'ee par la proposition \textbf{(\ref{prop:d2ltintroduction})}.

La construction du complexe diff\'erentiel \textbf{(\ref{eq:complexetheointro})} 
se ram\`ene donc \`a trouver pour toute paire  
$l,l'\in \mathfrak{I}_{N}$ avec $l_{\mathfrak{I}}(s)-l_{\mathfrak{I}}(s')=1$, des scalaires $\lambda_{l,l'}\in\mathbb{C}$, avec $\lambda_{l,l'}\neq 0$ si
$l'<_{B}l$, de mani\`ere \`a ce que dans l'equation \textbf{(\ref{eq:ccdintro})}, la somme  
$\lambda_{s,s_{1}}\cdot\lambda_{s_{1},s'}+\lambda_{s,s_{2}}\cdot\lambda_{s_{2},s'}$
soit \'egal \`a z\'ero pour toute couple $s,s'\in \mathfrak{I}_{n},~l_{\mathfrak{I}}(s)-l_{\mathfrak{I}}(s')=2$.

Le point delicat donc de la contruction de \textbf{(\ref{eq:complexetheointro})} est de bien choisir ces scalaires. Pour faire ceci
on procède de la fa\c con suivant;

Soit, dans tout ce qui suit de cette introduction, $\mathbf{G}$ un groupe classique quasi-deploy\'e 
d\'efinit sur les r\'eels, c'est \`a dire,  $\textbf{Sp}(2n)$ ou $\textbf{SO}(p,q)$ avec $p-q=0,1$ ou 2. 
Faisons remarquer que selon la terminologie d'Arthur les groupes classiques que nous consid\'erons
sont ceux qui apparaissent dans les donn\'ees endoscopiques elliptiques simples des
groups tordus $\mathbf{GL}(N)\rtimes\theta_{N}$.
 
Associ\'e au module  $\textbf{Speh}(\delta,n)$ de $\mathbf{GL}(2n)$ on 
a un param\`etre de Arthur, 
$$\psi:W_{\mathbb{R}}\times\textbf{SL}(2,\mathbb{C})\longrightarrow \mathbf{GL}(2n,\mathbb{C}),$$ 
de caract\`ere infinit\'esimal r\'egulier et entier. 
Le module $\textbf{Speh}(\delta,n)$ \'etant $\theta_{2n}$-invariant, il existe un groupe classique $\mathbf{G}$ d\'efini par une des options suivants,
\begin{align*}
\mathbf{G}=\left\{\begin{array}{cl}
                   \textbf{SO}(n+1,n)   &\text{si }\frac{p+n-1}{2}\in2\mathbb{Z}+1,\\
                   \textbf{SO}(n,n)     &\text{si }n\in2\mathbb{N}\text{ et }\frac{p+n-1}{2}\in2\mathbb{Z},\\
                   \textbf{SO}(n+1,n-1) &\text{si }n\in2\mathbb{N}+1\text{ et }\frac{p+n-1}{2}\in2\mathbb{Z},
                  \end{array}\right.
\end{align*}
et un param\`etre d'Arthur $\psi_{\mathbf{G}}$ de $\mathbf{G}$ de fa\c con \`a ce que,
\begin{align*}
\psi:W_{\mathbb{R}}\xrightarrow{\psi_{\mathbf{G}}}~^{L}\mathbf{G}\xrightarrow{\iota}\mathbf{GL}(2n,\mathbb{C}), 
\end{align*}
o\`u $\iota$ d\'enote l'inclusion standard. 

Adams et Johnson \textbf{\cite{Adams-Johnson}} on attach\'e \`a $\psi_{\mathbf{G}}$ un paquet 
de repr\'esentations irr\'eductibles de $\mathbf{G}(\mathbb{R})$, not\'e $\Pi_{\psi_{\mathbf{G}}}^{\text{AJ}}$. 
Toute repr\'esentation dans $\Pi_{\psi_{\mathbf{G}}}^{\text{AJ}}$ est unitaire et avec de la 
$(\mathfrak{g},K)$-cohomologie non triviale. 

D'apr\`es la classification fait par Vogan-Zuckerman pour toute repr\'esentation 
$\pi\in\Pi_{\psi_{\mathbf{G}}}^{\text{AJ}}$ il existe un caract\`ere $\lambda_{\pi}$ d'un groupe
de Levi $\textbf{L}(\mathbb{R})$ de $\textbf{G}(\mathbb{R})$ tel que $\pi=\mathscr{R}_{\mathfrak{q}}^{S}$. 
Les groupe de Levi qui vont intervenir dans notre context sont  les groupes 
unitaires $\textbf{U}(p,q),~p+q=n$.  
 
Fixons $\pi\in\Pi_{\psi_{\mathbf{G}}}^{\text{AJ}}$ de fa\c con \`a ce que le groupe de Levi 
associ\'e, $\mathbf{L}=\textbf{U}(p,q)$, soit quasi-deployé. Pour $\mathbf{L}$
l'ensemble $\Pi_{\lambda_{\pi}}(\textbf{L}(\mathbb{R}))$ des repr\'esentations irr\'eductibles
dont le caract\`ere infinit\'esimal est \'egal \`a celui de $\lambda_{\pi}$ est param\`etre par l'ensemble
$\mathfrak{I}_{n}^{p,q,\pm}$ dont les \'el\'ements sont des couples, $(\eta,f_{\eta})$
o\`u $\eta\in\mathfrak{I}_{n}$ et $f_{\eta}$ est une application de l'ensemble
des points fixes de $\eta$ dans $\{\pm 1\}$ v\'eerifiant en plus 
la condition suplem\'entaire etablie par l'equation \textbf{(\ref{eq:fsupplementaire})}.
Dans cette description la partition de $\Pi_{\lambda}(\mathbf{L})$
en $L$-paquets est particulièrement simple, deux param\`etres $\eta$ et $\eta'\in\mathfrak{I}_{n}^{p,q,\pm}$ 
correspondent \`a des repr\'esentations dans le m\^eme $L$-paquet si et seulement si les involutions sous-jacentes $\eta$ et $\eta'$ dans $\mathfrak{I}_{n}$ sont \'egales.
 Ce param\'etrage de l'ensemble $\Pi_{\lambda_{\pi}}(\textbf{L}(\mathbb{R}))$ a par cons\'equence 
que pour $\lambda_{\pi}$ la r\'esolution de Johnson s'\'ecrit,
\begin{align}\label{eq:suiteJohnsonunitaire}
0\rightarrow\lambda_{\pi}\rightarrow\cdots\rightarrow X_{i}^{L}\rightarrow X_{i+1}^{L}
\cdots\rightarrow X_{l_{\mathfrak{I}}(s_{\text{max}})}^{L}\rightarrow 0. 
\end{align}
o\`u pour tout $i\in[1,l_{\mathfrak{I}}(s_{\text{max}})]$,
\begin{align*}
X_{i}^{L}&:=\oplus_{\eta\in\mathfrak{I}_{n},l_{\mathfrak{I}}(\eta)=i}\Theta_{\eta}\\ 
\Theta_{\eta}&:=\oplus_{\{\overline{\eta}=(\zeta,f_{\zeta})\in\mathfrak{I}_{n}^{p,q,\pm},\zeta=\eta\}}X(\overline{\zeta}).
\end{align*}
Soit $\varphi_{\eta,\mathbf{L}}$ le $L$-param\`etre de $\mathbf{L}$ 
associ\'e \`a l'\'el\'ement $\eta\in\mathfrak{I}_{n}$, c'est le $L$-param\`etre attach\'e au
pseudo-paquet des repr\'esentations intervenant dans la somme, 
$\Theta_{\eta}=\oplus_{\{\overline{\eta}=(\zeta,f_{\zeta})\in\mathfrak{I}_{n}^{p,q,\pm},\zeta=\eta\}}X(\overline{\zeta})$.
Notons par $\iota_{\mathbf{G},\mathbf{L}}$ l'inclusion$~^{L}\mathbf{L}\hookrightarrow~^{G}\mathbf{G}$
et consid\'erons le $L$-param\`etre de $\mathbf{GL}(N)$ donn\'e par la composition,
\begin{align*}
\varphi_{\eta}:W_{\mathbb{R}}\xrightarrow{\varphi_{\eta,\mathbf{L}}}~^{L}
\mathbf{L}\xrightarrow{\iota_{\mathbf{G},\mathbf{L}}}^{L}\mathbf{G}\xrightarrow{\iota}\mathbf{GL}(N).
\end{align*}
Soit $X(\eta)$ le module standard de $\mathbf{GL}(N,\mathbb{R})$ attach\'e \`a $\varphi_{\eta}$,
alors l'application,
\begin{align*}
\Theta_{\eta}\mapsto X(\eta) 
\end{align*}
d\'efinit une bijection entre l'ensemble des modules standard stables, $\Theta_{\eta}$, 
intervenant dans la r\'esolution de Johnson de $\lambda_{\pi}$ est l'ensemble
des modules standard $\theta_{N}$-invariants de $\mathbf{GL}(N,\mathbf{R})$ 
intervenant dans la suite \textbf{(\ref{eq:suite8})}. 

En cons\'equence, pour trouver les constants n\'ecessaires \`a la contruction du complexe  \textbf{(\ref{eq:complexetheointro})} on s'appuie sur la suite \textbf{(\ref{eq:suiteJohnsonunitaire})} et plus particulièrement sur de propriétés du morphisme de Johnson $\varphi_{\overline{\eta},\overline{\eta}'}^{J}$ 
entre $X(\overline{\eta})$ et $X(\overline{\eta}'),~l_{\mathfrak{I}}(\eta')=l_{\mathfrak{I}}(\eta)-1$ . 
On dispose des deux r\'esultats suivants:
\begin{lem}\label{lem:introduc}
Soient $\overline{\eta}=(\eta,f_{\eta}),\overline{\eta}'=(\eta',f_{\eta'})\in\mathfrak{I}_{n}^{p,q,\pm}$ 
tels que $\eta'<_{B}\eta$, $l_{\mathfrak{I}}(\eta')=l_{\mathfrak{I}}(\eta)-1$ 
et tels que les modules standards $X(\overline{\eta})$ et $X(\overline{\eta}')$ 
admettent de fonctionnelles de Whittaker non nulles. 
Alors $\varphi_{\overline{\eta},\overline{\eta}'}^{J}\neq 0$ et $\text{Im}(\varphi_{\overline{\eta},\overline{\eta}'})$ possède un quotient générique.
\end{lem}
\begin{prop}\label{prop:introduct}
Soit $\mathbf{G}$ un groupe classique quasi-deployé; linéaire, unitaire orthogonal ou symplectique, et
$X$ un module standard générique de $\mathbf{G}(\mathbb{R})$ de caractère infinitesimal entier et régulier.
 Alors $X$ admet un unique sous-quotient irréductible générique, lequel appara\^it avec multiplicité un.
\end{prop}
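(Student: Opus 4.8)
The plan is to reduce the statement to the structure of the standard module as a product of generic (essentially square-integrable twisted) representations and to invoke the Langlands–Zelevinsky–Vogan description of the generic constituent together with the multiplicity-one property of Whittaker functionals. First I would recall that a standard module $X$ of a quasi-split $\mathbf{G}(\mathbb{R})$ is by definition obtained by (normalized) parabolic induction from a tempered-twisted-by-a-positive-character representation $\sigma\otimes\nu$ on a Levi $\mathbf{M}(\mathbb{R})$, and that "$X$ générique'' means precisely that $\sigma$ is generic, equivalently that $X$ itself carries a nonzero Whittaker functional. The existence of a generic subquotient is then immediate: by a theorem of Casselman–Shalika / Kostant in the real case (genericity is inherited by a unique constituent, the Langlands quotient being the one that remains generic when the inducing datum is tempered-generic and the exponents are in the closed positive cone), $X$ has at least one irreducible subquotient admitting a Whittaker functional.

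The heart of the argument is uniqueness and multiplicity one. Here I would argue as follows. Suppose $Y$ is an irreducible generic subquotient of $X$, appearing in a Jordan–Hölder filtration. Restricting a fixed nonzero Whittaker functional $\Omega$ of $X$ to the filtration, $\Omega$ factors through exactly one graded piece: if it vanished on a submodule $X'$ it descends to $X/X'$, and if it did not vanish on $X'$ it restricts nontrivially there; iterating, there is a unique graded piece $Y_0$ on which the induced functional is nonzero, hence $Y_0$ is generic. Any other generic constituent $Y$ would also carry a Whittaker functional $\Omega_Y$; but then by exactness of the Whittaker (Jacquet) functor $\mathrm{Wh}_\psi(-)$ — which in the archimedean, integral regular infinitesimal character setting is exact on the relevant category, by Kostant's theorem on the uniqueness and the work of Casselman–Hashizume–Wallach on the Whittaker model — one gets $\dim\mathrm{Wh}_\psi(X)=\sum_i m_i\dim\mathrm{Wh}_\psi(Y_i)$ over the constituents $Y_i$ with multiplicities $m_i$. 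Since $\dim\mathrm{Wh}_\psi(X)=1$ (multiplicity one of the Whittaker model for the standard module, which holds for quasi-split groups over $\mathbb{R}$ by Shalika's theorem, uniformly for linear, unitary, orthogonal and symplectic $\mathbf{G}$), the sum collapses: exactly one $Y_i$ is generic and it occurs with multiplicity one. This is the step I expect to be the main obstacle, because exactness of $\mathrm{Wh}_\psi$ is delicate at the archimedean place and one must be careful that the regularity and integrality of the infinitesimal character place us in the range where Kostant's/Casselman–Shalika-type vanishing of higher Whittaker homology applies; I would cite the precise version (e.g. via the Bruhat filtration of the induced representation and the vanishing of $\mathrm{Wh}_\psi$ on induced-from-non-generic pieces) and check it respects the case distinction among the classical groups in play.

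Finally I would assemble the pieces: existence gives at least one generic constituent, and the Whittaker-dimension count gives at most one, with multiplicity one. The only genuinely group-by-group check is the multiplicity-one statement $\dim\mathrm{Wh}_\psi(X)\le 1$ and the compatibility of the Whittaker normalization with the relevant Levi subgroups $\mathbf{L}=\mathbf{U}(p,q)$, $\mathbf{GL}(N)$ and the classical $\mathbf{G}$; for all of these the statement is classical (Shalika over $p$-adic fields, and the corresponding archimedean statements of Kostant and Wallach), so no new input beyond citing them is needed. I would remark that this proposition is exactly what licenses the construction of the Whittaker-normalized maps $\phi^{wh}_{s,s'}$ used earlier: the uniqueness of the generic subquotient forces the composite $\Omega_s\circ\phi^J_{s,s'}$ to be, up to a nonzero scalar, the fixed functional $\Omega_{s'}$, which is the normalization underlying Proposition \ref{prop:d2ltintroduction}.
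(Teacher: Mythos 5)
Your proposal is correct but follows a genuinely different route from the paper's. The paper realizes the standard module as a cohomological induction $X = \mathcal{R}_{\mathfrak{q}}^{S}\bigl(\mathrm{Ind}_{HN}^{L}(\delta)\bigr)$ from a principal series on a subgroup $L$, invokes Shahidi's theorem (3.6.7) of \textbf{\cite{Shahidi}} (which gives precisely the conclusion --- unique generic irreducible subquotient, with multiplicity one --- for that principal series on $L$), and then transfers it to $\mathbf{G}(\mathbb{R})$ by noting that in this infinitesimal-character range the functor $\mathcal{R}_{\mathfrak{q}}^{S}$ is exact, sends irreducibles to irreducibles, and preserves genericity. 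You instead work directly on $\mathbf{G}(\mathbb{R})$: you take the Langlands-classification realization of $X$ by parabolic induction, and you prove uniqueness and multiplicity one by a dimension count $\dim\mathrm{Wh}_{\psi}(X)=\sum_{i} m_{i}\dim\mathrm{Wh}_{\psi}(Y_{i})$, using exactness of the (smooth, top-degree) Whittaker functor together with $\dim\mathrm{Wh}_{\psi}(X)=1$. Your route is more self-contained in that it reproves the Shahidi-type statement from first principles rather than reducing to a smaller group, but it requires the delicate archimedean inputs you correctly flag as the main obstacle (exactness of $\mathrm{Wh}_{\psi}$ on the category of Harish--Chandra modules, and multiplicity one for the \emph{non-irreducible} standard module); the paper's route buys a clean black-box reduction --- the Whittaker analysis is packaged inside Shahidi's theorem --- at the cost of relying on properties of cohomological induction. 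One minor inefficiency in your write-up: the preliminary filtration argument (that a fixed $\Omega$ factors through exactly one graded piece) is superseded by the subsequent dimension count via exactness; only the latter actually excludes a second generic constituent carrying an independent Whittaker functional.
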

Considérons une famille $\{\overline{\eta},\overline{\eta}',\overline{\eta}_{1},\overline{\eta}_{2}\}$, vérifiant
$\eta'<\eta_{i}<\eta$, $l_{\mathfrak{I}}(\eta')=l_{\mathfrak{I}}(\eta_{i})-1=l_{\mathfrak{I}}(\eta)-2,~i=1,2$ et tel que pour tout $\overline{\mu}$ dans la famille
$X(\overline{\mu})$ admet une fonctionelle de Whittaker non nulle. 
Notons par $\Omega_{\overline{\eta}},\Omega_{\overline{\eta}'},\Omega_{\overline{\eta_{1}}}$ et $\Omega_{\overline{\eta}_{2}}$
les fonctionnelles de Whittaker respectifs.

D'après le lemme \textbf{(\ref{lem:introduc})}, pour toute paire $\zeta,\zeta'\in\mathfrak{I}_{N}$ v\'erifiant $\zeta'<_{B}\zeta$ et 
$l_{\mathfrak{I}}(\zeta')=l_{\mathfrak{I}}(\zeta)-1$, l'image $\text{Im}(\varphi_{\overline{\zeta},\overline{\zeta}'}^{J})$ contient un sous-quotient générique
si on peut dire de m\^eme de $X(\overline{\zeta})$ et $X(\overline{\zeta}')$. 
En conséquence, de la proposition \textbf{(\ref{prop:introduct})}, 
\begin{align*}
\Omega(\overline{\eta}_{i})\circ\varphi_{\overline{\eta}_{i},\overline{\eta}'}^{J}\quad\text{ et }\quad \Omega(\overline{\eta})
\circ\varphi_{\overline{\eta},\overline{\eta}_{i}}^{J},
\end{align*}
d\'efinisent des fonctionelles de Whittaker non nulles 
pour $X(\overline{\eta}_{i})~i=1,2,$ et $X(\overline{\eta})$, respectivement. Par l'unicit\'e de la fonctionelle de Whittaker 
il existent de constants $\beta_{\eta_{i},\eta'}$ et $\beta_{\eta,\eta_{i}},~i=1,2,$ 
de mani\`ere \`a ce que,
\begin{align*}
\Omega(\overline{\eta}')=\Omega(\overline{\eta}_{i})\circ\beta_{\eta_{i},\eta'}\varphi_{\overline{\eta}_{i},\overline{\eta}'}^{J}
\quad\text{ et }\quad\Omega(\overline{\eta}_{i})=\Omega(\overline{\eta})\circ\beta_{\eta,\eta_{i}}\varphi_{\overline{\eta},\overline{\eta}_{i}}^{J},
\end{align*}
 Notons,
\begin{align*}
\varphi_{\overline{\eta}_{i},\overline{\eta}'}^{{wh}}
=\beta_{\eta_{i},\eta'}\phi_{\overline{\eta}_{i},\overline{\eta}'}^{J}\quad\text{ et }
\quad\phi_{\overline{\eta},\overline{\eta}_{i}}^{{wh}}
=\beta_{\eta,\eta_{i}}\varphi_{\overline{\eta},\overline{\eta}_{i}}^{J}. 
\end{align*}
Comme,
\begin{align*}
\varphi_{\overline{\eta},\overline{\eta}_{1}}^{J}\circ \varphi_{\overline{\eta}_{1},\overline{\eta}'}^{J}+
\varphi_{\overline{\eta},\overline{\eta}_{2}}^{J}\circ \varphi_{\overline{\eta}_{2},\overline{\eta}'}^{J}=0, 
\end{align*}
il nous est possible de montrer que,
\begin{align*}
\varphi_{\overline{\eta},\overline{\eta}_{1}}^{{wh}}\circ \varphi_{\overline{\eta}_{1},\overline{\eta}'}^{{wh}}=
\varphi_{\overline{\eta},\overline{\eta}_{2}}^{{wh}}\circ \varphi_{\overline{\eta}_{2},\overline{\eta}'}^{{wh}}. 
\end{align*}
D'où on obtient, 
\begin{align*}
0&=\varphi_{\overline{\eta},\overline{\eta}_{1}}^{J}\circ \varphi_{\overline{\eta}_{1},\overline{\eta}'}^{J}+
\varphi_{\overline{\eta},\overline{\eta}_{2}}^{J}\circ \varphi_{\overline{\eta}_{2},\overline{\eta}'}^{J}\\
 &=\beta_{\eta,\eta_{1}}^{-1}\varphi_{\overline{\eta},\overline{\eta}_{1}}^{{wh}}\circ\beta_{\eta_{1},\eta'}^{-1}\varphi_{\overline{\eta}_{1},\overline{\eta}'}^{{wh}}
 +\beta_{\eta,\eta_{2}}^{-1}\varphi_{\overline{\eta},\overline{\eta}_{2}}^{{wh}}\circ\beta_{\eta_{2},\eta'}^{-1}\varphi_{\overline{\eta}_{2},\overline{\eta}'}^{{wh}}\\
 &=(\beta_{\eta,\eta_{1}}^{-1}\cdot\beta_{\eta_{1},\eta'}^{-1}+\beta_{\eta,\eta_{2}}^{-1}\cdot\beta_{\eta_{2},\eta'}^{-1})
 (\varphi_{\overline{\eta},\overline{\eta}_{1}}^{{wh}}\circ \varphi_{\overline{\eta}_{1},\overline{\eta}'}^{{wh}}+
\varphi_{\overline{\eta},\overline{\eta}_{2}}^{{wh}}\circ \varphi_{\overline{\eta}_{2},\overline{\eta}'}^{wh}),
\end{align*}
ce qui nous permet de conclure l'\'egalit\'e,
\begin{align}
\beta_{\eta,\eta_{1}}^{-1}\cdot\beta_{\eta_{1},\eta'}^{-1}+\beta_{\eta,\eta_{2}}^{-1}\cdot\beta_{\eta_{2},\eta'}^{-1}=0. 
\end{align}
Par cons\'equent si pour toute paire $s,s'\in\mathfrak{I}_{n}$, v\'erifiant $s'<_{B}s$ 
et $l_{\mathfrak{I}}(s')=l_{\mathfrak{I}}(s)-1$, nous définissons,
\begin{align*}
\phi_{s,s'}=\lambda_{s,s'}\phi_{s,s'}^{\text{wh}},\quad\text{où }\lambda_{s,s'}=\beta_{s,s'}^{-1}.
\end{align*} 
Alors la suite \textbf{(\ref{eq:complexetheointro})} construit à partir des morphismes 
\begin{align*}
\phi_{i}&:X_{i-1,\theta}\rightarrow X_{i,\theta},\\
\phi_{i}&=\oplus_{s\in\mathfrak{I}_{n},l_{\mathfrak{I}}(s)=i}\oplus_{s'\in\mathfrak{I}_{n},l_{\mathfrak{I}}(s')=i-1}\phi_{s,s'},
\end{align*}
est un complexe différentiel.

\subsection{Plan du papier}
Le plan du papier est le suivant. Dans la section trois nous introduisons l'ordre de Bruhat $\leq_{B}$
sur le groupe symetrique $\mathfrak{S}_{N}$.
Nous donnons une définition de longueur sur $\mathfrak{S}_{N}$ et sur le sous ensemble $\mathfrak{I}_{N}$ 
des involutions de celui-ci, en plus d'\'enoncer les principales propriétés satisfait par
cette fonction longueur. La section quatre concerne, la théorie de représentation 
de $\mathbf{GL}(N,\mathbb{R})$, elle est divisée en trois parties.
Dans un premier temp nous rappelons une paramétrisation de l'ensemble de représentations irréductibles dont 
le caractère infinitésimal est celui d'une représentation de dimension
fini de $\mathbf{GL}(N,\mathbb{R})$, par l'ensemble $\mathfrak{I}_{N}^{\bullet,\pm}$,  des involutions avec points fixes 
En suite nous introduisons l'ensemble des représentations $\theta_{N}$ invariantes. La  
section quatre s'achève
avec la définition du module de Speh basé sur une série discrète. 
La section cinq s'intéresse à la théorie
de represéntation du groupe unitaire $\mathbf{U}(p,q,\mathbb{R}),~p+q=N$, comme pour 
$\mathbf{GL}(N,\mathbb{R})$ on rappelle un paramétrage combinatoire 
des représentations irréductibles dont le caractère infinitésimal est celui d'une représentation de dimension
fini de $\mathbf{GL}(N,\mathbb{R})$. 
La preuve commence dans la section six. Celle-ci s'ouvre 
avec un rappel des résultats de la thèse de Johnson qui donnent
des résolutions pour les représentations qui nous intéressent. 
En suite nous contruissons \textbf{(\ref{eq:complexetheointro})}, ramenons la preuve de \textbf{(\ref{theo:intro})} à la solution d'un système d'équations et 
montrons le lemme \textbf{(\ref{lem:introduc})} et la proposition \textbf{(\ref{prop:introduct})}, ce qui a comme conséquence la veracité du théorème \textbf{(\ref{theo:intro})}.
Nous finissons avec la preuve de la $\theta$-exactitude de \textbf{(\ref{eq:complexetheointro})}
pour $\textbf{Speh}(\delta,n)$ avec $n$ plus petit ou égal à 4. 
La section \textbf{(5.3)} traite le cas de $\textbf{Speh}(\delta,6)$  
et la section \textbf{(5.4)} celui de $\textbf{Speh}(\delta,8)$ .

\section{Notations}
Dans cet article, nous nous int\'eressons \`a la th\'eorie 
de repr\'esentation des groupes alg\'ebriques suivants:
\begin{enumerate}[i.]
 \item Le groupe lin\'enaire $\mathbf{GL}(N),~N\in\mathbb{N}$.
 \item Le groupe unitaire $\mathbf{U}(p,q),~p,q\in\mathbb{N}$.
 \item Les groupes classiques $\textbf{Sp}(2n),~n\in\mathbb{N}$ 
 et $\mathbf{SO}(p,q),p,q\in\mathbb{N}$ avec $p-q=0,1$ ou 2. 
\end{enumerate}

Soit $\mathbf{G}$ un groupe alg\'ebrique d\'efini par une des trois possibilit\'es pr\'ec\'edents.
Dans tout ce qui suit, nous allons supposer fix\'e une involution de Cartan $\tau$ de $\mathbf{G}$, de sorte que le sous-groupe
$K=\mathbf{G}(\mathbb{R})^{\tau}$ des points fixes de $\tau$ dans $\mathbf{G}(\mathbb{R})$ soit un sous-groupe compact maximal de $\mathbf{G}(\mathbb{R})$.
Par exemple, si $\mathbf{G}=\mathbf{GL}(N)$ alors,
\begin{align*}
\tau:g\mapsto~^{t}g^{-1} 
\end{align*}
et $K=\mathbf{O}(N,\mathbb{R})$. Maintenant, si $\mathbf{G}=\mathbf{U}(p,q)$ alors
\begin{align*}
\tau:g\mapsto~^{t}\bar{g}^{-1} 
\end{align*}
et $K=\mathbf{U}(p,\mathbb{R})\times \mathbf{U}(q,\mathbb{R})$.

Soit $\mathcal{M}=\mathcal{M}(\mathbf{G}(\mathbb{R}),\tau)$ la categorie des modules de Harish-Chandra pour la paire
$(\mathfrak{g},K)$, $\mathfrak{g}$ la complexification de l'alg\`ebre de Lie du groupe de 
Lie $\mathbf{G}(\mathbb{R})$. Notons $\Pi(\mathbf{G}(\mathbb{R}))$ l'ensemble des classes d'\'equivalence de modules 
irr\'eductibles dans $\mathcal{M}$. 
Si $F$ est une repr\'esentation irr\'eductible de dimension
finie de $\mathbf{G}(\mathbb{R})$, notons $\Pi_{F}(\mathbf{G}(\mathbb{R}))$ l'ensemble des \'el\'ements de $\Pi(\mathbf{G}(\mathbb{R}))$ 
dont le caract\`ere infinit\'esimal est \'egal \`a celui de $F$.

Dans \textbf{\cite{BB}} Beilinson et Bernstein 
proposent une param\'etrization de $\Pi_{F}(\mathbf{G}(\mathbb{R}))$ par un ensemble de param\`etres, not\'e 
$\mathscr{P}_{F}$. Dans cette parametrization \`a chaque param\`etre $\gamma\in \mathscr{P}_{F}$, Beilinson et Bernstein 
associent un module standard $X(\gamma)$. Ce module standard poss\`ede la propri\'et\'e de ne contenir
qu'un seul sous-module irr\'eductible que l'on note $\overline{X(\gamma)}$. La bijection 
\'etablissant la classification est,
\begin{align*}
\gamma\in\mathscr{P}_{F}\mapsto \overline{X(\gamma)}\in\Pi_{F}(\mathbf{G}(\mathbb{R})). 
\end{align*}
Dans \textbf{\cite{VoganIII}}, David Vogan munit l'ensemble $\Pi_{F}(\mathbf{G}(\mathbb{R}))$
d'un ordre partiel $\leq$ appel\'e $\mathcal{G}$-ordre de Bruhat ainsi que d'une fonction longueur,
\begin{align*}
l:\mathscr{P}_{\chi}\rightarrow \mathbb{Z}. 
\end{align*}
L'ordre de Bruhat et la longueur \'etant d\'efinis sur l'ensembles
des param\`etres $\mathscr{P}_{F}$ on peut par transport de structure,
les d\'efinir sur les $\Pi_{F}(\mathbf{G}(\mathbb{R}))$ ou bien sur tout
autre ensemble de param\`etres en bijection avec $\Pi_{\mathbf{G}(\mathbb{R})}(F)$.

L'ordre de Bruhat et la longueur poss\`edent les propri\'et\'es fondamentales
suivantes,
\begin{prop}
\textbf{i.} Soient $\gamma,\gamma'$ dans $\mathscr{P}_{F}$ avec $\gamma\leq\gamma'$.
Alors $l(\gamma)\leq l(\gamma')$. Si $\gamma\leq\gamma'$ et $l(\gamma)\leq l(\gamma')$, 
alors $\gamma=\gamma'$. D'autre part, si $\gamma\leq\gamma'$ et il n'existe pas
d'\'el\'ement strictement compris entre $\gamma$ et $\gamma'$ dans le $\mathcal{G}$-ordre
de Bruhat, alors $\gamma=\gamma'$ ou bien $l(\gamma')=l(\gamma)+1$.\\
\textbf{ii.} Soient $\gamma,\gamma'\in\mathscr{P}_{F}$ tels que 
$\overline{X(\gamma)}\in\mathrm{J.H}(X(\gamma))$. Alors $\gamma\leq\gamma'$. Si 
de plus $l(\gamma')=l(\gamma)$ ou $l(\gamma')=l(\gamma)+1$ alors $\overline{X(\gamma)}$ appara\^it dans $\mathrm{J.H}(X(\gamma))$ 
avec multiplicit\'e 1.
\end{prop}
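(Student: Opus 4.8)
The plan is to deduce both assertions from Vogan's structural results in \cite{VoganIII} together with the Beilinson--Bernstein picture of \cite{BB}; nothing here goes beyond a careful bookkeeping of that material, so I only indicate the chain of reductions. Recall that a parameter $\gamma\in\mathscr{P}_F$ records, via localization, a $K$-orbit $\mathcal{O}_\gamma$ on the flag variety attached to the infinitesimal character of $F$ together with an irreducible $K$-equivariant local system on it; that $l(\gamma)=\dim_{\mathbb{C}}\mathcal{O}_\gamma$ up to a fixed shift independent of $\gamma$; and that the $\mathcal{G}$-order of Bruhat is the integral Bruhat order generated by the elementary wall-crossing relations of \cite{VoganIII}. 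The composition series of the standard module $X(\gamma')$ is controlled by the Kazhdan--Lusztig--Vogan polynomials $P_{\gamma,\gamma'}$: one has, in the Grothendieck group, $X(\gamma')=\sum_{\gamma}m_{\gamma,\gamma'}\,\overline{X(\gamma)}$ with $m_{\gamma,\gamma'}=P_{\gamma,\gamma'}(1)$, $m_{\gamma',\gamma'}=1$, $m_{\gamma,\gamma'}\neq 0\Rightarrow\gamma\leq\gamma'$, and, for $\gamma<\gamma'$, $\deg P_{\gamma,\gamma'}\leq\tfrac12\bigl(l(\gamma')-l(\gamma)-1\bigr)$ with $P_{\gamma,\gamma'}$ of constant term $1$.

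For part i I would first treat a covering pair $\gamma\lessdot\gamma'$ (no parameter strictly between). Vogan's analysis exhibits each cover of the $\mathcal{G}$-order as one of finitely many elementary types, and in each of them $l(\gamma')=l(\gamma)+1$; this is exactly the last assertion of i. For an arbitrary relation $\gamma\leq\gamma'$ one chooses a saturated chain $\gamma=\gamma_0\lessdot\gamma_1\lessdot\cdots\lessdot\gamma_r=\gamma'$, so that $l(\gamma')-l(\gamma)=r\geq 0$, which gives monotonicity of $l$; and if in addition the two lengths are equal then $r=0$, i.e. $\gamma=\gamma'$, which is the (correctly stated) second assertion.

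For part ii, the implication $\overline{X(\gamma)}\in\mathrm{J.H}(X(\gamma'))\Rightarrow\gamma\leq\gamma'$ is the statement $m_{\gamma,\gamma'}\neq 0\Rightarrow\gamma\leq\gamma'$ recalled above (equivalently: the composition factors of the standard module supported on $\mathcal{O}_{\gamma'}$ live on orbits contained in $\overline{\mathcal{O}_{\gamma'}}$). For the multiplicity: if $l(\gamma')=l(\gamma)$, then part i forces $\gamma=\gamma'$, so the multiplicity is $m_{\gamma',\gamma'}=1$; if $l(\gamma')=l(\gamma)+1$, then the degree bound reads $\deg P_{\gamma,\gamma'}\leq 0$, so $P_{\gamma,\gamma'}$ coincides with its constant term $1$ and hence $m_{\gamma,\gamma'}=P_{\gamma,\gamma'}(1)=1$.

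The one genuinely delicate input is the fact used in part i that every cover of the $\mathcal{G}$-order raises $l$ by exactly one — equivalently, that $(\mathscr{P}_F,\leq)$ is graded by $l$. This is where one must invoke Vogan's case-by-case description of the elementary relations in \cite{VoganIII}, the Cayley-transform covers being precisely those that require explicit verification; all the rest of the argument is formal and uses only the degree estimate for the Kazhdan--Lusztig--Vogan polynomials.
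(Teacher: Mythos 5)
The paper does not prove this proposition at all: it is stated, immediately after the citation to Vogan \cite{VoganIII}, as a recapitulation of Vogan's structural results on the $\mathcal{G}$-order, the length function, and composition multiplicities of standard modules. There is therefore no internal argument to compare yours against; what you should be judged on is whether your reconstruction correctly extracts the needed facts from the cited sources, and it does. For part \textbf{i} you correctly isolate the one nontrivial input, namely that every covering relation in the $\mathcal{G}$-order raises the Vogan length by exactly one (the gradedness of $(\mathscr{P}_F,\leq)$, which does require Vogan's case-by-case study of the elementary relations, in particular the Cayley-transform covers), and the remaining assertions of \textbf{i} reduce to it by choosing a saturated chain. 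You also correctly reinterpret the second assertion of \textbf{i} and the displayed hypothesis of \textbf{ii} as the intended ones (the paper's statement has two typographical slips: it should read $l(\gamma)\geq l(\gamma')$, or $=$, in the second clause of \textbf{i}, and $X(\gamma')$ rather than $X(\gamma)$ in \textbf{ii}). For part \textbf{ii} your argument via the Kazhdan--Lusztig--Vogan multiplicity formula $m_{\gamma,\gamma'}=P_{\gamma,\gamma'}(1)$, the support condition $m_{\gamma,\gamma'}\neq0\Rightarrow\gamma\leq\gamma'$, and the degree bound $\deg P_{\gamma,\gamma'}\leq\frac12\bigl(l(\gamma')-l(\gamma)-1\bigr)$ with constant term $1$, is exactly the standard deduction, and the two cases $l(\gamma')=l(\gamma)$ and $l(\gamma')=l(\gamma)+1$ are handled correctly (the first forcing $\gamma=\gamma'$ by part \textbf{i}, the second forcing $P_{\gamma,\gamma'}\equiv1$). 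In short: your proof is correct, and since the paper offers none, it is also the natural proof one would supply; the only caveat worth recording is that the identification $l(\gamma)=\dim_{\mathbb{C}}\mathcal{O}_\gamma$ plus a shift is an oversimplification of Vogan's length function, but you do not actually use it, since you rely on Vogan's gradedness result directly.
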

Parlons un peut sur l'ensemble $\mathscr{P}_{F}$. Notons 
pour cela par $\mathscr{B}$ la vari\'et\'e des drapeaux de $\mathfrak{g}$ et 
$K_{\mathbb{C}}$ la complexification du sous-groupe compact maximal $K$.
Alors $\mathscr{P}_{F}$ est constitu\'e par des couples $\gamma=(Q,\chi)$
o\`u $Q$ est une $K_{\mathbb{C}}$-orbite dans $\mathscr{B}$ et $\chi$ est un fibr\'e
en droite holomorphe plat $K_{\mathbb{C}}$-homog\`ene sur $Q$. Les
modules ${X(\gamma)}$ et $\overline{X(\gamma)}$ sont obtenus \`a partir
de $\gamma$ par la th\'eorie de la localisation de Beilinson-Bernstein, \textbf{\cite{BB}}.\\

Si maintenant nous consid\'erons le cas du groupe lin\'eaire $\mathbf{G}=\mathbf{GL}(N)$, alors $\mathbf{K}=\mathbf{O}(N)$
et les $\mathbf{O}(N,\mathbb{C})$-orbites sur la vari\'et\'e des drapeaux $\mathscr{B}$
de $\mathbf{GL}(N,\mathbb{C})$ sont param\`etr\'ees par les involutions dans le groupe
sym\'etrique $\mathfrak{S}_{N}$. Notons $\mathfrak{I}_{N}$ l'ensemble des involutions
dans $\mathfrak{S}_{N}$ on a donc une bijection entre $\mathfrak{I}_{N}$ 
et $\mathbf{O}(N,\mathbb{C})/\mathscr{B}$. Danc la section \textbf{(4.1)}
nous introduisons l'ensemble $\mathfrak{I}_{N}^{\bullet,\pm}$
des involutions avec points fixes sign\'es form\'es des couples, $(\eta,f_{\eta})$
o\`u $\eta\in\mathfrak{I}_{N}$ et $f_{\eta}$ est une application de l'ensemble
des points fixes de $\eta$ dans $\{\pm 1\}$. Ce donn\'ee suppl\'ementaire
permet de construir un \'el\'ement de $\mathscr{P}_{\chi}$ et donc d'obtenir 
une bijection entre $\mathfrak{I}_{N}^{\bullet,\pm}$ et $\Pi_{F}(\mathbf{G}(\mathbb{R}))$.\\

Passons au cas du groupe unitaire $\mathbf{G}=\mathbf{U}(p,q)$. Posons $p+q=N$. On a 
$K\cong \mathbf{U}(p)\times \mathbf{U}(q)$ et $K_{\mathbb{C}}=\mathbf{GL}(p,\mathbb{C})\times \mathbf{GL}(p,\mathbb{C})$. 
 Dans la section \textbf{(5.1)} nous donnons un param\'etrage 
combinatoire de $K_{\mathbb{C}}/\mathscr{B}$ et donc de $\Pi_{F}(\mathbf{G}(\mathbb{R}))$,  car pour
ce groupe $K_{\mathbb{C}}/\mathscr{B}\cong\Pi_{F}(\mathbf{G}(\mathbb{R}))$. Cet param\'etrage est fait \`a
partir d'un ensemble not\'e $\mathfrak{I}_{N}^{p,q,\pm}$ qui, tel 
tel comme pour $\mathbf{GL}(N)$, est constitu\'e par des couples, $(\eta,f_{\eta})$
o\`u $\eta\in\mathfrak{I}_{N}$ et $f_{\eta}$ est une une application de l'ensemble
des points fixes de $\eta$ dans $\{\pm 1\}$, v\'eerifiant en plus 
la condition suplem\'entaire etablie par l'equation \textbf{(\ref{eq:fsupplementaire})}.\\
~\\
Maintenant qu'on a pu constater l'importance de l'ensemble des involutions $\mathfrak{I}_{N}$ 
dans la param\'etrisation de $\Pi_{F}(\mathbf{G}(\mathbb{R}))$ passons \`a donner une description plus
en d\'etail de cet ensemble. 

\section{L'ensemble des involutions du groupe sym\'etrique}
\subsection{Ensemble partiellement ordonné}
Soit $P$ un ensemble fini partiellement ordonné. Nous allons dire que 
$P$ est borné s'il existe un élément $\hat{1}\in P$ et un élémént $\hat{0}\in P$
tel que pour tout $x\in P$ on a $\hat{0}\leq x\leq \hat{1}$. Pour tout $x, y\in P$
avec $x\leq y$ notons,
\begin{align*}
[x,y]=\{z\in P:x\leq z\leq y\}.
\end{align*}  
L'ensemble $[x,y]$ on l'appelle un intervalle de $P$. Si $x, y\in P$
avec $x<y$, une chaîne de $x$ à $y$ de longueur $k$
est un $k$-uplet $(x_{0},x_{1},\cdots,x_{k})$ tel que $x=x_{0}<x_{1}<\cdots<x_{k}=y$. 
Pour tout couple $x,y\in P$ on dit que $y$ couvre $x$ si $x<y$ et il n'existe pas $z\in P$ 
tel que $x<z<y$. Une chaîne $x_{0}<x_{1}<\cdots<x_{k}$ de $P$ se dit alors saturée si 
pour tout  $0\leq i\leq k-1$, $x_{i+1}$ couvre $x_{i}$.
Finalement on dit que $P$ est gradué de rang $n$ si $P$ est borné avec 
tous les chaînes maximales de même longueur $n$.
\begin{deftn}
Soit $P$ un ensemble fini partiellement ordonné, pure et gradué. Alors on dit
que $P$ est shellable s'il existe une relation d'ordre totale sur l'ensemble 
des chaînes maximales, $C_{1},C_{2},\cdots,C_{k}$, de $P$ tel que, pour tout $i<j$
il existe $t<j$ tel que,
$$C_{i}\cap C_{j}\subset C_{t}\cap C_{j},\text{ et } |C_{t}\cap C_{j}|=|C_{j}|-1.$$ 
\end{deftn}
\begin{deftn}
Pour $P$ un ensemble fini partiellement ordonné, notons, 
$$C(P)=\{(x,y)\in P^{2},~x\text{ est couvert par y}\}.$$    
Soit $Q$ un ensemble fini totalement ordonné. Un edge labelling de $P$
avec valeurs dans $Q$ est une application $\lambda:C(P)\rightarrow Q$. 
Si lambda est un edge labelling de $P$, pour toute chaîne saturée $x_{0}<x_{1}<\cdots<x_{k}$ notons,
$$\lambda(x_{0},x_{1},\cdots,x_{k})=(\lambda(x_{0},x_{1}),\lambda(x_{1},x_{2}),\cdots,\lambda(x_{k-1},x_{k}))$$  
Un edge labelling est dit un EL-labelling, si pour tout $x,y\in P$ avec $x<y$ on a le deux propriétés suivants, 
\begin{enumerate}[i.]
\item Il existe une unique chaîne saturée $x_{0}<x_{1}<\cdots<x_{k}$ tel que  
la suite $\lambda(x_{0},x_{1},\cdots,x_{k})$ est non décroissant, 
c'est-à-dire tel que $\lambda(x_{0},x_{1})\leq\lambda(x_{1},x_{2})\leq\cdots\leq\lambda(x_{k-1},x_{k})$.
\item Toute autre chaîne saturée $y_{0}<y_{1}<\cdots<y_{k}$ différent de 
$x_{0}<x_{1}<\cdots<x_{k}$, est tel que, 
$\lambda(x_{0},x_{1},\cdots,x_{k})<_{L}\lambda(y_{0},y_{1},\cdots,y_{k})$, où $<_{L}$
 désigne l'ordre lexicographique:  $(a_{0},a_{1},\cdots,a_{k})<_{L}(b_{0},b_{1},\cdots,b_{k})$ si et seulement si 
$a_{i}<b_{i}$, $i=\text{min}\{j\leq k:a_{j}\neq b_{j}\}$.
\end{enumerate}
Alors nous allons dire que $P$ est lexicographique shellable ou El-shellable s'il possède un El-labelling.
\end{deftn}
\begin{theo}\label{theo:EL-SHE}
Soit $P$ un ensemble fini partiellement ordonné, gradué. Si $P$ est El-shellable alors $P$ est shellable. 
\end{theo}
\begin{proof}[\textbf{Preuve}.]
Voir théorème \textbf{(2.3)} de \textbf{\cite{Bjorner}}.
\end{proof}
\subsection{L'ordre de Bruhat sur les involutions du groupe sym\'etrique}
Cette section est issus-presque tel quel de l'annexe \textbf{A} de \textbf{\cite{AMR}}.\\

Soit $\mathfrak{S}_{N}$ le groupe sym\'etrique, c'est-\`a-dire l'ensemble des 
bijections de l'ensemble $\{1,\cdots,N\}$ dans lui m\^eme. C'est un groupe de
Coxeter, qui est donc muni de sa fonction longueur $l_{\mathfrak{S}}$ 
et de son ordre de Bruhat $\leq_{B}$ associ\'e.

Si $s\in\mathfrak{S}_{N}$, on d\'efinit l'ensemble des inversions de $s$,
\begin{align*}
\text{Inv}(s)=\{(i,j)\in\{1,\cdots,N\}^{2}:i<j\text{ et }s(i)>s(j)\} 
\end{align*}
et l'on note inv$(s)$ le cardinal de cet ensemble. La longueur dans $\mathfrak{S}_{N}$ est alors
définie par,
\begin{align*}
l_{\mathfrak{S}}(s)=\text{inv}(s), \quad(s\in\mathfrak{S}_{N}). 
\end{align*}
L'ordre de Bruhat $\leq_{B}$ de $\mathfrak{S}_{N}$, est la relation d'ordre partielle dans $\mathfrak{S}_{N}$ 
obtenue après fermeture transitive de la relation $\rightarrow$ définie par, 
\begin{align*}
\sigma\rightarrow \tau~\text{ ssi il existe une transposition } t_{i,j} \text{ tel que } 
\tau=\sigma t_{i,j}\text{ et }\text{inv}(\sigma)\leq\text{inv}(\tau).
\end{align*}
L'ordre de Bruhat est gradu\'e par la fonction longueur $l_{\mathfrak{S}}$, c'est-\`a-dire que,
\begin{enumerate}[i.]
 \item Si $s\leq_{B} s'$, alors $l_{\mathfrak{S}}(s)\leq l_{\mathfrak{S}}(s')$.
 \item Si $s\leq_{B} s'$, et $l_{\mathfrak{S}}(s)=l_{\mathfrak{S}}(s')$ alors $s=s'$.
 \item Si $s<_{B} s'$, et s'il n'existe pas $t\in\mathfrak{S}_{N}$ tel que 
 $s<_{B}<_{B}t<_{B} s'$, alors $l_{\mathfrak{S}}(s)= l_{\mathfrak{S}}(s')-1$.
\end{enumerate}
 Soit $s\in\mathfrak{S}_{N}$. On appelle mont\'ee de $s$ un couple $(i,j)\in\{1,\cdots,N\}^{2}$ tel
que $i<j$ et $s(i)<s(j)$. On dit qu'une mont\'ee $(i,j)$ de $s$ est libre s'il n'existe 
pas de $k$ tel que $i<k<j$ et $s(i)<s(k)<s(j)$. L'ordre de Bruhat est alors totalement d\'etermin\'e
par les propri\'et\'es \textbf{i.}, \textbf{ii.}, \textbf{iii.} et,
\begin{enumerate}
 \item[iv.] $s\leq_{B}s'$ avec $l_{\mathfrak{S}}(s)=l_{\mathfrak{S}}(s')-1$ si et seulements s'il 
 existe une mont\'ee libre $(i,j)$ de $s$ et $s'=st_{i,j}$, o\`u $t_{i,j}$ d\'esigne la transposition
 de $\mathfrak{S}_{N}$ \'echangeant $i$ et $j$.
 \end{enumerate}
En particulier $\mathfrak{S}_{N}$ poss\`ede un \'el\'ement minimal, de longueur nulle, \`a savoir
l'identit\'e, que nous notons Id, et un \'el\'ement maximal, $s_{\text{max}}$ de longueur $\frac{N(N-1)}{2}$,
d\'efinit par $t_{1,N}t_{2,N-1}\cdots t_{\frac{N}{2},\frac{N}{2}+1}$ si $N$ est pair et 
$t_{1,N}t_{2,N-1}\cdots t_{\frac{N-1}{2},\frac{N-1}{2}+1}$ si $N$ est impair, 
o\`u $t_{i,j}$ d\'esigne la transposition de $\mathfrak{S}_{N}$ \'echangeant $i$
et $j$.\\

Consid\'erons maintenant l'ensemble $\mathfrak{I}_{N}$ des involutions dans $\mathfrak{S}_{N}$, et munissons-le de l'ordre induit
par $\leq_{B}$. Il est lui-aussi munit d'une fonction longueur. Pour la d\'efinir, pour tout
$s\in \mathfrak{I}_{N}$ notons,
\begin{align*}
\text{Exc}(s)=\{i\in\{1,\cdots,N\}:s(i)>i\} 
\end{align*}
et exc$(s)$ le cardinal de cet ensemble. Posons alors,
\begin{align*}
l_{\mathfrak{I}}(s)=\frac{\text{inv}(s)+\text{exc}(s)}{2}. 
\end{align*}
L'ordre de Bruhat sur $\mathfrak{I}_{N}$ est alors gradu\'e par $l_{\mathfrak{I}}$, c'est-\`a-dire
que les propr\'et\'es \textbf{i.}, \textbf{ii.}, \textbf{iii.} et une propri\'et\'e 
analogue \`a \textbf{iv}, que nous n'avons pas besoin de expliciter,
sont v\'erifi\'es pour $\mathfrak{I}_{N}$ en rempla\c cant $l_{\mathfrak{S}}$
par $l_{\mathfrak{I}}$. Remarquons aussi que Id et $s_{\text{max}}$ sont dans $\mathfrak{I}_{N}$ et sont donc
respectivement les \'el\'ements minimaux et maximaux de $\mathfrak{I}_{N}$.\\

La longuer $l_{\mathfrak{S}}$ et $l_{\mathfrak{I}}$ satisfont le r\'esultat suivant;
\begin{lem}\label{lem:long}
Soient $s,s'\in\mathfrak{I}_{N}$ tels que $s\leq_{B}s'$ et $l_{\mathfrak{I}}(s)=l_{\mathfrak{I}}(s')-1$. Alors
\begin{align*}
l_{\mathfrak{S}}(s)-l_{\mathfrak{S}}(s')=,1,2\text{ ou }3. 
\end{align*}
\end{lem}
\begin{proof}[\textbf{Preuve}.]
Voir point \textbf{i} du lemme \textbf{(A.1)} de l'annexe \textbf{A} de \textbf{\cite{AMR}}.
\end{proof}
Notons à présent $Z=\{(i,j)\in \{1,\cdots,N\}^{2}:i<j\}$ et soit, $\lambda_{\mathfrak{I}}:C(\mathfrak{S}_{N})\rightarrow Z$
le labelling défini pour $(\sigma_{1},\sigma_{2})\in C(\mathfrak{S}_{N})$ avec $\sigma_{2}=\sigma_{1}t_{i,j}$ par,
$$\lambda_{\mathfrak{S}}(\sigma_{1},\sigma_{2})=(i,j).$$
P.H. Edelman démontre dans \textbf{\cite{Edelman}} (théorème à la fin de la page  356): 
\begin{theo}\label{theo:ELSHEsim}
L'ensemble $\mathfrak{S}_{N}$ est El-shellable (voir définition \textbf{(3.2)}) avec $\lambda_{\mathfrak{S}}$ comme El-labelling.
\end{theo}
De manière analogue, sur $\mathfrak{I}_{N}$ on peut en suivant le travaux de F.Incitti de définir, 
\begin{align*}
\lambda_{\mathfrak{I}}&:C(\mathfrak{I}_{N})\rightarrow Z\\
\lambda_{\mathfrak{I}}(\tau_{1},\tau_{2})&=(i,j),~\text{ où }\tau_{2}=ct_{i,j}\tau_{1}.
\end{align*}
avec l'élément $\tau_{2}=ct_{i,j}\tau_{1}$ donné comme dans la définition \textbf{(3.2)} de \textbf{\cite{Incitti}}.
Cette défintion permet ensuite à F.Incitti de demontrer (théorème \textbf{(6.2)} de \textbf{\cite{Incitti}}):  
\begin{theo}\label{theo:ELSHEinv}
L'ensemble $\mathfrak{I}_{N}$ est El-shellable (voir définition \textbf{(3.2)}) avec $\lambda_{\mathfrak{I}}$ comme El-labelling.
\end{theo} 
Le résultat qui suit est une consequ\'ence du théorème \textbf{\ref{theo:ELSHEsim}} et du théorème \textbf{\ref{theo:ELSHEinv}}:
\begin{lem}\label{lem:d2lt}
Soient $s,s'\in\mathfrak{S}_{N}$ (respectivement $s,s'\in\mathfrak{I}_{N}$) v\'erifiant $s\leq_{B}s'$ et $l_{\mathfrak{I}}(s)=l_{\mathfrak{I}}(s')-2$ (respectivement
$l_{\mathfrak{S}}(s)=l_{\mathfrak{S}}(s')-2$). 
Alors ils existent exactement deux \'el\'ements $s_{1}$ et $s_{2}$ dans $\mathfrak{S}_{N}$ (respectivement dans $\mathfrak{I}_{N}$) de longueur 
$l_{\mathfrak{S}}(s)+1$ (respectivement $l_{\mathfrak{I}}(s)+1$) tels que,
\begin{align*}
s<_{B}s_{1},~s_{2}<_{B}s'. 
\end{align*}
\end{lem}
\begin{proof}[\textbf{Preuve}.]
Voir lemme \textbf{(A.2)} et proposition \textbf{(A.3)} de l'annexe \textbf{A} de \textbf{\cite{AMR}}.
\end{proof}
Pour finir avec cette section, supposons $N$ pair. Alors dans $\mathfrak{I}_{N}$ consid\'erons le sous-ensemble 
$\mathfrak{I}_{N}^{s}$ des involutions sans points
fixes, envoyant $\{1,\cdots,N/2\}$ sur $\{N/2+1,\cdots,N\}$. Munissons le de l'ordre induit. 
Pour $\mathfrak{I}_{N}^{s}$ on a le r\'esultat suivant:
\begin{prop}\label{prop:intervalle}
L'ensemble $\mathfrak{I}_{N}^{s}$ est un intervalle dans $\mathfrak{I}_{N}$
avec $t_{1,N/2+1}t_{2,N/2+2}\cdots t_{N/2-1,N}$ comme \'el\'ement minimal et $s_{\text{max}}$ comme \'el\'ement maximal.
De plus le morphisme, 
\begin{align*}
\sigma:\mathfrak{I}_{N}&\rightarrow\mathfrak{S}_{N}\\
\sigma(s)(i)&=s(i)-\frac{N}{2}, \quad i\in\{1,\cdots,N\}, 
\end{align*}
d\'efinit un isomorphisme entre $\mathfrak{I}_{N}^{s}$ et le groupe sym\'etrique $\mathfrak{S}_{N/2}$.
\end{prop}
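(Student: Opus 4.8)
The plan is to reduce the whole statement to the classical rank-matrix (tableau) criterion for the Bruhat order of a symmetric group: for $u,v\in\mathfrak{S}_M$ one has $u\le_{B}v$ if and only if $r_{u}(i,j)\ge r_{v}(i,j)$ for all $i,j$, where $r_{u}(i,j)=\#\{k\le i:u(k)\le j\}$; in particular the identity has the pointwise largest rank function and $s_{\text{max}}$ the smallest. First I would record the length formula on $\mathfrak{I}_{N}^{s}$. If $s\in\mathfrak{I}_{N}^{s}$ then $\mathrm{Exc}(s)=\{1,\dots,N/2\}$, so $\mathrm{exc}(s)=N/2$; and splitting the pairs $(i,j)$ with $i<j$ into those with both indices $\le N/2$, both $>N/2$ (each family contributing $\mathrm{inv}(\sigma(s))$, using $\mathrm{inv}(w)=\mathrm{inv}(w^{-1})$), and one in each half (all $(N/2)^{2}$ of which are inversions, since $s$ sends $\{1,\dots,N/2\}$ above $N/2$), one gets $\mathrm{inv}(s)=2\,\mathrm{inv}(\sigma(s))+N^{2}/4$, hence $l_{\mathfrak{I}}(s)=l_{\mathfrak{S}}(\sigma(s))+N(N+2)/8$. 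Thus $l_{\mathfrak{I}}$ attains its minimum on $\mathfrak{I}_{N}^{s}$ exactly at the element $\eta_{0}$ with $\sigma(\eta_{0})=\mathrm{Id}$, i.e. $\eta_{0}=t_{1,N/2+1}t_{2,N/2+2}\cdots t_{N/2,N}$, and its maximum at the lift of the longest element of $\mathfrak{S}_{N/2}$, which is exactly $s_{\text{max}}$.

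Next I would check that $\sigma$ restricts to a bijection $\mathfrak{I}_{N}^{s}\to\mathfrak{S}_{N/2}$: an element of $\mathfrak{I}_{N}^{s}$ is determined by its restriction to $\{1,\dots,N/2\}$, the values on the second half being forced by the involution property; conversely any $w\in\mathfrak{S}_{N/2}$ lifts to the map $s$ defined by $s(i)=w(i)+N/2$ for $i\le N/2$ and $s(j)=w^{-1}(j-N/2)$ for $j>N/2$, and a one-line check shows $s$ is a fixed-point-free involution in $\mathfrak{I}_{N}^{s}$ with $\sigma(s)=w$.

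The heart of the argument is then the block computation of the rank function. Writing $w=\sigma(s)$, one finds $r_{s}(i,j)=0$ when $i,j\le N/2$; $r_{s}(i,j)=i+j-N$ when $i,j>N/2$; $r_{s}(i,j)=r_{w^{-1}}(i-N/2,\,j)$ when $j\le N/2<i$; and, crucially, $r_{s}(i,j)=r_{w}(i,\,j-N/2)$ when $i\le N/2<j$. The first two blocks do not depend on $w$, so impose no condition; in the last two, the inequality $r_{s}(i,j)\ge r_{s'}(i,j)$ becomes, respectively, $r_{w^{-1}}(i-N/2,j)\ge r_{w'^{-1}}(i-N/2,j)$ and $r_{w}(i,j-N/2)\ge r_{w'}(i,j-N/2)$, and as $(i,j)$ runs over each block these are precisely the tableau conditions $w^{-1}\le_{B}w'^{-1}$ and $w\le_{B}w'$, which are equivalent since $w\mapsto w^{-1}$ is an automorphism of the Bruhat poset. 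Hence $s\le_{B}s'$ in $\mathfrak{S}_{N}$ (equivalently, in $\mathfrak{I}_{N}$ with the induced order) if and only if $\sigma(s)\le_{B}\sigma(s')$ in $\mathfrak{S}_{N/2}$, so $\sigma|_{\mathfrak{I}_{N}^{s}}$ is an isomorphism of posets onto $\mathfrak{S}_{N/2}$.

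Finally, for the ``interval'' claim: since $s_{\text{max}}$ is the global maximum of $\mathfrak{I}_{N}$, one has $[\eta_{0},s_{\text{max}}]=\{s\in\mathfrak{I}_{N}:s\ge_{B}\eta_{0}\}$, and it remains to see this set equals $\mathfrak{I}_{N}^{s}$. The inclusion $\mathfrak{I}_{N}^{s}\subseteq[\eta_{0},s_{\text{max}}]$ is immediate from the previous paragraph ($\sigma(s)\ge_{B}\mathrm{Id}$ gives $s\ge_{B}\eta_{0}$). Conversely, if $s\in\mathfrak{I}_{N}$ satisfies $s\ge_{B}\eta_{0}$, the rank criterion gives $r_{s}(i,j)\le r_{\eta_{0}}(i,j)$ for all $i,j$; since $\eta_{0}$ sends $\{1,\dots,N/2\}$ into $\{N/2+1,\dots,N\}$ we have $r_{\eta_{0}}(N/2,N/2)=0$, hence $r_{s}(N/2,N/2)=0$, so $s$ maps $\{1,\dots,N/2\}$ into — and by cardinality onto — $\{N/2+1,\dots,N\}$; being an involution, $s$ is then fixed-point-free and lies in $\mathfrak{I}_{N}^{s}$. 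The only mildly delicate point is the block computation of the rank function in the third paragraph, together with the bookkeeping between $w$ and $w^{-1}$; everything else is routine. If one insisted on staying within the tools explicitly recalled in the paper (covers via free ascents, \emph{EL}-shellability), the alternative would be to use the conjugation identity $\sigma^{-1}(w\,t_{a,b})=t_{a,b}\,\sigma^{-1}(w)\,t_{a,b}$ to lift and project saturated chains between $\mathfrak{S}_{N/2}$ and $\mathfrak{I}_{N}^{s}$, but this is more cumbersome and I would use it only as a fallback.
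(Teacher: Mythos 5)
The paper's proof is only a citation to \textbf{\cite{AMR}}, Proposition \textbf{A.4}, so there is no in-text argument to compare against; what you give is a correct, self-contained proof by a route that stays entirely within the Ehresmann rank-matrix criterion. The length identity $l_{\mathfrak{I}}(s)=l_{\mathfrak{S}}(\sigma(s))+N(N+2)/8$ on $\mathfrak{I}_{N}^{s}$ is right and pins down the extremal elements. The block decomposition of the rank function — $0$ on the block $i,j\le N/2$, the $\sigma(s)$-independent quantity $i+j-N$ on the block $i,j>N/2$, and $r_{\sigma(s)^{-1}}(i-N/2,\,j)$, $r_{\sigma(s)}(i,\,j-N/2)$ on the two off-diagonal blocks — reduces the tableau criterion for $s\le_{B}s'$ in $\mathfrak{S}_{N}$ exactly to the criterion for $\sigma(s)\le_{B}\sigma(s')$ in $\mathfrak{S}_{N/2}$, the two off-diagonal constraints agreeing because $w\mapsto w^{-1}$ is an automorphism of the Bruhat poset. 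Your reverse inclusion for the interval claim, reading $r_{s}(N/2,N/2)=0$ from $s\ge_{B}\eta_{0}$ and deducing that $s$ must map $\{1,\dots,N/2\}$ onto $\{N/2+1,\dots,N\}$ and hence be fixed-point free, is also correct. What this approach buys, compared with the cover-by-free-ascent machinery the paper recalls from \textbf{\cite{AMR}} and Incitti, is that one never needs to analyze the cover relation of $\mathfrak{I}_{N}$ at all: everything is a direct bookkeeping of rank functions in $\mathfrak{S}_{N}$. One minor remark: the minimal element as displayed in the statement, $t_{1,N/2+1}\cdots t_{N/2-1,N}$, appears to be a typographical slip — the product should run through $t_{N/2,N}$, which is the $\eta_{0}$ you correctly use.
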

\begin{proof}[\textbf{Preuve}.]
Voir proposition \textbf{(A.4)} de l'annexe \textbf{A} de \textbf{\cite{AMR}}.
\end{proof}
\section{L'ensembles des représentations irréductibles du groupe lineaire}
\subsection{Paramétrisation de Beilinson-Bernstein pour $\textbf{GL}(N,\mathbb{R})$}
Fixons une repr\'esentation de dimension finie $F$ de $\mathbf{GL}(N,\mathbb{R})$
et consid\'erons l'ensemble $\Pi_{F}(\mathbf{GL}(N,\mathbb{R}))$ des
repr\'esentations de $\mathbf{GL}(N,\mathbb{R})$ dont le caract\`ere infinit\'esimal
est \'egal \`a celui de $F$. Le  caract\`ere infinit\'esimal de $F$ est entier et r\'egulier, 
il est donc donn\'e par $(\lambda_{1},\cdots,\lambda_{N})\in\mathbb{C}^{N}$ tel que
pour tout $i$ et $j$ distincts, $\lambda_{i}-\lambda_{j}\in\mathbb{N}^{\times}$.
On peut de plus permuter les $\lambda_{i}$ de sorte que pour tout $i\in\{1,\cdots,N\}$,
$\lambda_{i+1}-\lambda_{i}\in\mathbb{N}$. 

Soit $\mathscr{P}_{F}$ l'ensemble des param\`etres de Beilinson-Bernstein introduit dans \textbf{\cite{BB}}. Rappelons que $\mathscr{P}_{F}$ 
est constitu\'e par des couples $\gamma=(Q,\chi)$ o\`u $Q$ est une $\mathbf{O}(N,\mathbb{C})$-orbite dans $\mathscr{B}$, la variet\'e
de drapeaux de $\mathfrak{gl}_{n}$, et $\chi$ est un fibr\'e
en droite holomorphe plat $\mathbf{O}(N,\mathbb{C})$-homog\`ene sur $Q$. L'ensemble
$\mathscr{P}_{F}$ est, d'apr\`es le travaux de Beilinson-Bernstein, en bijection 
avec $\Pi_{F}(\mathbf{GL}(N,\mathbb{R}))$. 

On voudrait donner un param\`etrage combinatoire de $\mathscr{P}_{F}$. On sait que 
les $\mathbf{O}(N,\mathbb{C})$-orbites sur la vari\'et\'e des drapeaux $\mathscr{B}$
de $\mathbf{GL}(N,\mathbb{C})$ sont param\`etr\'ees par les involutions dans le groupe
sym\'etrique $\mathfrak{S}_{N}$. Pour \'etendre cette 
bijection en une param\`etrization de $\mathscr{P}_{F}$, introduisons 
l'ensemble $\mathfrak{I}_{N}^{\bullet,\pm}$ des involutions
avec point fixes sign\'es form\'es des couples $(\eta,f_{\eta})$ 
o\`u $\eta\in\mathfrak{I}_{N}$ est une involution dans $\mathfrak{S}_{N}$
et $f_{\eta}$ est une application de l'ensemble des pointes fixes de $\eta$ dans $\{\pm 1\}$. 
Cette donn\'ee supl\'ementaire conc\'ed\'ee 
par l'attribution d'un signe \`a chaque point fixe d'un \'el\'ement $\eta\in\mathfrak{I}_{N}$ 
est justement l'information qu'il faut 
pour construire une bijection entre $\mathfrak{I}_{N}^{\bullet,\pm}$ 
et $\Pi_{F}(\mathbf{GL}(N,\mathbb{R}))$, voir \textbf{\cite{AMR}}, section \textbf{(4.2)}, 
pour la construction de cet isomorphisme. 

Par cons\'equent entre $\mathfrak{I}_{N}^{\bullet,\pm}$ et $\Pi_{F}(\mathbf{GL}(N,\mathbb{R}))$ on obtient,
\begin{align*}
\mathfrak{I}_{N}^{\bullet,\pm}\cong\mathscr{P}_{F}\cong \Pi_{F}(\mathbf{GL}(N,\mathbb{R})). 
\end{align*}
Donnons une d\'escripton plus d\'etaill\'e de cette bijection entre
$\mathfrak{I}_{N}^{\bullet,\pm}$ et l'ensemble de repr\'esentations $\Pi_{F}(\mathbf{GL}(N,\mathbb{R}))$.

Sur $\mathbf{GL}(N,\mathbb{R})$ d\'efinissons,
\begin{align*}
\nu_{N}:\mathbf{GL}(N,\mathbb{R})&\rightarrow\mathbb{R}^{\ast}\\
g&\mapsto |\det g|, 
\end{align*}
et pour toute paire $t_{1}, t_{2}\in\mathbb{C}$ avec $t_{1}-t_{2}\in\mathbb{Z}$, soit,
\begin{align*}
\delta(t_{1},t_{2}), 
\end{align*}
la s\'erie discr\`ete de $GL(2,\mathbb{R})$ de caract\`ere infinit\'esimal $(s_{1},s_{2})$.

Prennons un \'el\'ement $\overline{\eta}=(\eta,f_{\eta})\in \mathfrak{I}_{N}^{\bullet,\pm}$. 
La repr\'esentation de $\Pi_{F}(G)$ associ\'e \`a $\overline{\eta}$ est obtenue
de la mani\`ere suivant; on d\'ecompose l'involution $\eta$ en cycles $\tau$: ceux-ci
sont des point fixes ou bien des transpositions. Chaque point fixe $i=\eta(i)$ donne une
repr\'esentation,
\begin{align*}
\delta_{\eta,(i)}=\epsilon_{i}\nu^{\lambda_{i}} 
\end{align*}
de $GL(1,\mathbb{R})$, o\`u $\epsilon_{i}\in\{\mathbf{Triv},\mathbf{sgn}\}$ d\'epend du signe $f_{\eta}(i)$ 
attach\'e au point fixe $i$ d'une manie\`ere que nous n'avons pas besoin d'expliciter, mais qui depend de la repr\'esentation
de dimension finie $F$ fix\'ee au d\'epart. Chaque transposition $(ij)$ avec $i<j$ donne une s\'erie discr\`ete
de $GL(2,\mathbb{R})$, 
\begin{align*}
\delta_{\eta,(ij)}=\delta(\lambda_{i},\lambda_{j}). 
\end{align*}
La repr\'esentation de $\Pi_{F}(G)$ associ\'ee \`a $\overline{\eta}$ est l'unique sous-module
irr\'eductible, not\'e $\overline{X}(\overline{\eta})$, du module standard,
\begin{align}\label{eq:modulestandardassocie}
X(\overline{\eta})=\times_{\tau\text{ cycle de }\eta}^{\rightarrow}\delta_{s,\tau}. 
\end{align} 
La fl\`eche au dessus du produit est pour indiquer que le produit est \'ecrit dans un ordre standard (Voir definition \textbf{(4.3)} de \textbf{\cite{AMR}}).

\subsection{Repr\'esentations $\theta$-invariants de $\mathbf{GL}(N,\mathbb{R})$}
Soit $J_{N}\in\mathbf{GL}(N,\mathbb{R})$ la matrice antidiagonale,
\begin{equation}\label{eq:mat}
J_{N}=\left(\begin{array}{cccc}
     ~ &~ &~ &1\\
      ~ &~   &-1  &~\\
  ~&{\mathinner{\mkern2mu\raise1pt\hbox{.}\mkern2mu
\newline \raise4pt\hbox{.}\mkern2mu\raise7pt\hbox{.}\mkern1mu}}&~ &~\\
(-1)^{N+1}&~&~&~
    \end{array}\right).
\end{equation}
Sur le groupe lin\'eaire $\mathbf{GL}(N)$ d\'efinissons l'automorphisme involutif,
\begin{align}\label{eq:theta}
\theta_{N}:\mathbf{GL}(N)&\rightarrow \mathbf{GL}(N),\\
                         g&\mapsto J_{N}^{-1}(^{t}g^{-1})J_{N}.
\end{align}
L'automorphisme $\theta_{N}$ nous am\`ene \`a d\'efinir;
\begin{deftn}
On dit d'une repr\'esentation $\pi$ de $\mathbf{GL}(N,\mathbb{R})$ qu'elle est $\theta_{N}$-invariante si $\pi$ est \'equivalente \`a 
$\pi^{\theta_{N}}:=\pi\circ\theta_{N}$.
\end{deftn}
Cette section a comme objectif, la description l'ensemble de repr\'esentation irr\'eductibles
et $\theta_{N}$-invariants de $\mathbf{GL}(N,\mathbb{R})$.

Fixons une repr\'esentation de dimension finie $F$ de $\mathbf{GL}(N,\mathbb{R})$ que l'on suppose
$\theta_{N}$ invariant et consid\'erons l'ensemble de repr\'esentations $\pi\in\Pi_{F}(\mathbf{GL}(N,\mathbb{R}))$. 
 D'apr\`es la caract\`erisation fait par l'equation \textbf{(\ref{eq:modulestandardassocie})} 
il existe un \'el\'ement $\overline{\eta}=(\eta,f_{\eta})\in\mathfrak{I}_{N}^{\bullet,\pm}$ 
de mani\`ere \`a ce que $\pi$ soit l'unique sous-module irr\'eductible du module standard,
\begin{align*}
X(\overline{\eta})&=\times_{\tau\text{ cycle de }\eta}^{\rightarrow}\delta_{\eta,\tau}\\ 
&=\text{Ind}_{P}(\otimes_{\tau\text{ cycle de }\eta}^{\rightarrow}\delta_{\eta,\tau}),
\end{align*}
o\`u $P$ est un sous-groupe parabolique de levi,
\begin{align*}
M=\Pi_{\tau\text{ cycle de }\eta}^{\rightarrow}\mathbf{GL}(n_{\tau},\mathbb{R}),\quad n_{\tau}=\left\{\begin{array}{cc}
            1, &\text{si }\tau\text{ est un point fixe},\\
            2, &\text{si }\tau\text{ est une transposition}
           \end{array}\right. 
\end{align*}
contenant l'ensemble des matrices triangulaires sup\'erieures de $\mathbf{GL}(N,\mathbb{R})$.

Consid\'erons l'op\'erateur d\'efini pour tout $f\in X(\eta),~g\in \mathbf{GL}(N,\mathbb{R})$ par,
\begin{align*}
\vartheta(f)(g)=f(\theta_{N}(g)).
\end{align*}
Il est facile de voir que $\vartheta$ est un isomorphisme qui entrelace, 
\begin{align*}
X(\overline{\eta})^{\theta_{N}}\quad\text{et}\quad\text{Ind}_{\theta_{N}(P)}((\otimes_{\tau\text{ cycle de }\eta}^{\rightarrow}\delta_{\eta,\tau})^{\theta_{N}}).
\end{align*}
Comme en plus $\pi^{\theta_{N}}\cong\overline{\text{Ind}_{\theta_{N}(P)}(\pi_{M}^{\theta_{N}})}$ on obtient que $\pi$ est isomorphe \`a
$\pi^{\theta_{N}}$ si et seulement si,
\begin{align*}
\theta_{N}(P)=P\quad\text{et}\quad\otimes_{\tau\text{ cycle de }\eta}^{\rightarrow}\delta_{\eta,\tau}\cong
(\otimes_{\tau\text{ cycle de }\eta}^{\rightarrow}\delta_{\eta,\tau})^{\theta_{N}}. 
\end{align*}
Consid\'erons maintenant l'\'el\'ement $\overline{\eta}^\theta=(\eta^{\theta},f_{\eta^{\theta}})\in\mathfrak{I}_{N}^{\bullet,\pm}$ 
d\'efini pour tout $i\in\{1,\cdots,N\}$ par,
\begin{align*}
\eta^{\theta}(i)&=j~\text{ si et seulement si }~\eta(n+1-i)=n+1-j,\\
f_{\eta^{\theta}}(i)&=f_{\eta^{\theta}}(n+1-i),~\text{ si }i\text{ est un point fixe}.
\end{align*}
Alors on peut facilement v\'erifier que,
\begin{align*} 
(\otimes_{\tau\text{ cycle de }\eta}^{\rightarrow}\delta_{\eta,\tau})^{\theta_{N}}=
\otimes_{\tau'\text{ cycle de }\eta^{\theta}}^{\rightarrow}\delta_{\eta^{\theta},\tau'}
\end{align*}
et que $\theta_{N}(P)=P'$ avec $P'$ le un sous-groupe parabolique de levi,
\begin{align*}
M'=\Pi_{\tau'\text{ cycle de }\eta^{\theta}}^{\rightarrow}\mathbf{GL}(n_{\tau'},\mathbb{R}),\quad n_{\tau'}=\left\{\begin{array}{cc}
            1, &\text{si }\tau'\text{ est un point fixe},\\
            2, &\text{si }\tau'\text{ est une transposition}
           \end{array}\right. 
\end{align*}
contenant l'ensemble des matrices triangulaires sup\'erieures de $\mathbf{GL}(N,\mathbb{R})$.
En cons\'equence, 
\begin{align*}
X(\overline{\eta})^{\theta_{N}}\cong X(\overline{\eta}^{\theta}), 
\end{align*}
et d'apr\`es la bijection $\mathfrak{I}_{N}^{\bullet,\pm}\rightarrow\Pi_{F}(GL(N,\mathbb{R}))$, 
on obtient $\pi^{\theta}\cong \overline{X}(\overline{\eta}^{\theta})$. 
L'\'effet donc de $\theta_{N}$ sur $\mathfrak{I}_{N}^{\bullet,\pm}$ 
via la bijection $\mathfrak{I}_{N}^{\bullet,\pm}\cong \Pi_{F}(GL(N,\mathbb{R}))$
est donn\'e pour tout $\overline{\eta}=(\eta,f_{\eta})\in\mathfrak{I}_{N}^{\bullet,\pm}$ par, 
\begin{align}\label{eq:actionthetasurInvo}
\overline{\eta}\mapsto\theta_{N}(\overline{\eta})=\overline{\eta}^\theta=(\eta^{\theta},f_{\eta^{\theta}}).
\end{align}
D'\`ou $\pi=\overline{X}(\overline{\eta})\cong \overline{X}(\overline{\eta}^{\theta})=\pi^{\theta_{N}}$ 
si et seulement si $\overline{\eta}=\theta_{N}(\overline{\eta})=\overline{\eta}^{\theta}$.
\subsection{Le module de Speh}
Soit $p,n\in\mathbb{N}^{\times}$ tels que $p>n-1$. Notons par
$\delta=\delta(p/2,-p/2)$ la s\'erie discr\`ete de $\mathbf{GL}(2,\mathbb{R})$ 
de caract\`ere infinit\'esimal $(p/2,-p/2)$ et consid\'erons le module standard de $\mathbf{GL}(2n,\mathbb{R})$,
\begin{align}\label{eq:modulemaximal}
\delta\nu^{-\frac{n-1}{2}}\times\delta\nu^{-\frac{n-3}{2}}\times\cdots\times\delta\nu^{\frac{n-1}{2}}. 
\end{align}
L'unique sous module irr\'eductible de \textbf{(\ref{eq:modulemaximal})} on le note $\textbf{Speh}(\delta,n)$. 
Le caract\`ere infinit\'esimal de $\textbf{Speh}(\delta,n)$ est donn\'e par,
\begin{align}\label{eq:c.i.speh}
\left(\frac{-p-(n-1)}{2},\frac{-p-(n-3)}{2},\cdots,\frac{p+(n-1)}{2}\right) 
\end{align}
et la condition $p>n-1$ assure que ce caract\`ere infinit\'esimal est bien entier et r\'egulier.
C'est donc le caract\`ere infinit\'esimal d'une repr\'esentation de dimension finie $F$ de $\mathbf{GL}(2n,\mathbb{R})$.
Rappelons la bijection entre l'ensemble des involutions avec points fixes sign\'es 
$\mathfrak{I}_{N}^{\bullet,\pm}$ et l'ensemble 
de repr\'esentations $\Pi_{F}(\mathbf{GL}(N,\mathbb{R}))$.
 Dans cette bijections, $\textbf{Speh}(\delta,n)$ correspond \`a l'involution 
sans points fixes $s_{0}$ d\'efinie par,
\begin{align*}
s_{0}=n+i,\quad i=1,2,\cdots,n-1. 
\end{align*}
L'involution $s_{0}$ est l'\'el\'ement minimal dans $\mathfrak{I}_{N}^{s}$, 
l'ensemble des involutions sans point fixes envoyant $\{1,\cdots,N/2\}$ sur $\{N/2+1,\cdots,N\}$.
D'apr\`es la proposition \textbf{(\ref{prop:intervalle})}, $\mathfrak{I}_{N}^{s}$ est un intervalle dans $\mathfrak{I}_{N}$ 
avec $s_{\text{max}}$ comme \'el\'ement maximal. 
De plus $\mathfrak{I}_{N}^{s}\cong\mathfrak{S}_{n}$ et comme $\mathfrak{I}_{N}^{s}$ est 
naturellement un sous-ensemble de $\mathfrak{I}_{N}^{\bullet,\pm}$, 
la bijection $\mathfrak{I}_{N}^{\bullet,\pm}\rightarrow \Pi_{F}(\mathbf{GL}(N,\mathbb{R}))$ 
associe, par la formule \textbf{(\ref{eq:modulestandardassocie})}, \`a tout \'el\'ement 
$s\in\mathfrak{S}_{n}\cong \mathfrak{I}_{N}^{s}$ l'unique sous-module irr\'eductible $\overline{X}(s)$ 
de la repr\'esentation standard de $\mathbf{GL}(2n,\mathbb{R})$ don\'ee par,
\begin{align}\label{eq:modulestandardspeh}
X(s)=\times_{i=1}^{n}\delta\left(\frac{-p-(n-1)}{2}+(i-1),\frac{p-(n-1)}{2}+(s(i)-1)\right). 
\end{align}
Parlons un peut sur l'ensemble des sous-quotients irr\'eductibles de 
$X(s_{0})$.
Pour pouvoir faire ceci notons que l'ordre de Bruhat sur
$\mathfrak{I}_{N}$ est lie au 
ordre naturel sur $\mathbf{O}(N,\mathbb{C})\setminus\mathscr{B}$ (Voir \textbf{\cite{AMR}} equation \textbf{(2.1)} 
pour une description de cette ordre) 
par le r\'esulat suivant;
\begin{prop}
L'ordre naturel sur $\mathbf{O}(N,\mathbb{C})\setminus\mathscr{B}$ co\"incide 
 avec l'ordre de Bruhat invers\'e sur $\mathfrak{I}_{N}$.  
\end{prop}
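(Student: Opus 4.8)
Le plan est de traduire les deux ordres en termes des invariants de rang des $\mathbf{O}(N,\mathbb{C})$-orbites sur $\mathscr{B}$, puis de constater qu'ils se correspondent via le critère d'Ehresmann. Pour $\eta\in\mathfrak{I}_{N}$, choisissons une base $e_{1},\dots,e_{N}$ de $\mathbb{C}^{N}$ adaptée à la forme symétrique non dégénérée $q$ définissant $\mathbf{O}(N,\mathbb{C})$, telle que $q$ apparie $e_{k}$ à $e_{\eta(k)}$ lorsque $\eta(k)\ne k$ et que $e_{k}$ soit anisotrope, orthogonal aux autres, si $\eta(k)=k$; l'orbite $Q_{\eta}$ est alors celle du drapeau $F_{i}=\langle e_{1},\dots,e_{i}\rangle$. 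Pour un tel drapeau on a $F_{j}^{\perp}=\langle e_{k}:\eta(k)>j\rangle$, d'où
\begin{align*}
\rho_{\eta}(i,j):=\dim\bigl(F_{i}\cap F_{j}^{\perp}\bigr)=\#\{k\le i:\eta(k)>j\}=i-r_{\eta}(i,j),\qquad r_{\eta}(i,j):=\#\{k\le i:\eta(k)\le j\}.
\end{align*}
Je vérifierais d'abord que la famille $(i,j)\mapsto\rho_{\eta}(i,j)$ est un invariant complet des $\mathbf{O}(N,\mathbb{C})$-orbites sur $\mathscr{B}$ (deux drapeaux ayant les mêmes rangs relatifs vis-à-vis de $q$ étant conjugués sous $\mathbf{O}(N,\mathbb{C})$), puis, conformément à la description de l'ordre naturel rappelée dans \textbf{\cite{AMR}} (équation \textbf{(2.1)}), que
\begin{align*}
Q_{\eta}\subset\overline{Q_{\eta'}}\iff\rho_{\eta}(i,j)\ge\rho_{\eta'}(i,j)\text{ pour tout }(i,j),
\end{align*}
l'implication directe venant de la semi-continuité inférieure du rang le long de $\mathscr{B}$, et la réciproque du fait que l'adhérence d'une orbite est coupée par les conditions de rang correspondantes.

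Il resterait à invoquer le critère d'Ehresmann pour l'ordre de Bruhat de $\mathfrak{S}_{N}$, à savoir $w\le_{B}w'\iff r_{w}(i,j)\ge r_{w'}(i,j)$ pour tout $(i,j)$, critère qui se restreint tel quel au sous-ensemble $\mathfrak{I}_{N}$ muni de l'ordre induit. En combinant avec l'égalité $\rho_{\eta}=i-r_{\eta}$ ci-dessus on obtient, pour $\eta,\eta'\in\mathfrak{I}_{N}$,
\begin{align*}
Q_{\eta}\subset\overline{Q_{\eta'}}\iff r_{\eta}(i,j)\le r_{\eta'}(i,j)\ \forall(i,j)\iff\eta'\le_{B}\eta,
\end{align*}
c'est-à-dire que $\eta\mapsto Q_{\eta}$ est un isomorphisme d'ensembles ordonnés entre $\mathfrak{I}_{N}$ muni de l'ordre de Bruhat inversé et $\mathbf{O}(N,\mathbb{C})\setminus\mathscr{B}$ muni de l'ordre naturel. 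Ceci redonne au passage la compatibilité des graduations, $\dim Q_{\eta}=\binom{N}{2}-l_{\mathfrak{I}}(\eta)$, puisque tant $l_{\mathfrak{S}}$ que $l_{\mathfrak{I}}$ se lisent sur la matrice des rangs.

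Le point délicat est la réciproque de l'équivalence centrale: montrer que les seules équations de $\overline{Q_{\eta'}}$ sont les inégalités $\dim(F_{i}\cap F_{j}^{\perp})\ge\rho_{\eta'}(i,j)$. C'est ce qui est établi dans \textbf{\cite{AMR}} et que l'on reprend ici; cela peut aussi se voir comme une instance du fait classique que l'action de $\mathbf{GL}(N,\mathbb{C})$ sur les couples (drapeau, forme symétrique) a des adhérences d'orbites définies par des conditions de rang. Si l'on voulait éviter d'invoquer ce fait, une variante consisterait, les deux ordres étant déjà gradués de même fonction de rang, à comparer leurs relations de couverture: côté orbites, celles-ci sont décrites par l'action monoïdale des réflexions simples sur les $K_{\mathbb{C}}$-orbites (Richardson--Springer, Vogan), fournissant une liste explicite d'opérations sur les involutions; côté Bruhat, les couvertures de $\mathfrak{I}_{N}$ sont données par la classification d'Incitti \textbf{\cite{Incitti}} sous-jacente au théorème \textbf{\ref{theo:ELSHEinv}}. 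On vérifierait alors que ces deux listes coïncident après inversion, l'obstacle principal de cette variante étant le traitement des couvertures pour lesquelles $l_{\mathfrak{S}}$ saute de $3$ (lemme \textbf{\ref{lem:long}}), réalisées géométriquement par une chaîne de trois opérations simples se contractant en une unique relation d'adjacence d'orbites.
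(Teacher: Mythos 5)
The paper offers no argument of its own here: the proof is a one-line citation to \cite{Richardson-Springer}, section (10.2), which is where the identification of the closure order on $\mathbf{O}(N,\mathbb{C})\setminus\mathscr{B}$ with the reversed Bruhat order on $\mathfrak{I}_{N}$ is actually established. Your sketch takes the natural approach of translating both orders into rank inequalities and invoking Ehresmann's criterion, and the easy direction is handled cleanly: the computation $\dim(F_i\cap F_j^{\perp})=i-r_{\eta}(i,j)$ is correct, and upper semicontinuity of $\dim(F_i\cap F_j^{\perp})$ gives $Q_{\eta}\subset\overline{Q_{\eta'}}\Rightarrow r_{\eta}\le r_{\eta'}\Leftrightarrow\eta'\le_{B}\eta$. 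The grading comparison $\dim Q_{\eta}=\binom{N}{2}-l_{\mathfrak{I}}(\eta)$ also checks out.

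The gap is exactly where you flag it: the converse, that a flag satisfying the rank inequalities for $\eta'$ actually lies in $\overline{Q_{\eta'}}$, is the substance of the Richardson--Springer theorem, and your two fallbacks do not discharge it. Citing \cite{AMR} is circular, since \cite{AMR} establishes nothing here beyond the same citation to Richardson--Springer; and the appeal to ``le fait classique'' about $\mathbf{GL}(N,\mathbb{C})$ acting on pairs (drapeau, forme symétrique) is not a theorem you can point to -- the degeneracy-loci results that come to mind concern Schubert varieties in isotropic flag varieties, not $K$-orbit closures in $\mathbf{GL}(N)/B$, and in general the ``rank variety'' cut out by such inequalities can strictly contain the orbit closure for symmetric pairs (this is a known phenomenon in the clan case, for example). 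So the hard inclusion is asserted, not proved. Your alternative route, comparing covering relations via the Richardson--Springer monoid action on one side and Incitti's classification on the other, is a genuine proof strategy, but it amounts to re-proving the cited theorem; as you observe, the covers with $l_{\mathfrak{S}}$ jumping by $3$ (lemme \textbf{\ref{lem:long}}) would need a careful geometric matching. In short, the sketch usefully isolates which implication is free and which carries the weight, but it still rests on the same external input as the paper, and presenting the converse as following from AMR or from a ``classical fact'' is a genuine gap.
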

\begin{proof}[\textbf{Preuve}.]
Voir \textbf{\cite{Richardson-Springer}} section \textbf{(10.2)}.
\end{proof}
Cette derni\`ere proposition unie \`a la proposition \textbf{(\ref{prop:intervalle})} a 
comme cons\'equence le r\'esultat suivant;
\begin{prop}
La repr\'esentation $\pi$ de $\mathbf{GL}(2n,\mathbb{R})$ est un sous-quotient irr\'eductible de $I(\delta,n)$ si et seulement si 
il existe une permutation $s\in\mathfrak{S}_{n}$ tel que $\pi\cong\overline{X}(s)$.
\end{prop}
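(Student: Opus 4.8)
L'idée est de combiner les deux propositions qui précèdent immédiatement l'énoncé : la proposition affirmant que l'ordre naturel sur $\mathbf{O}(N,\mathbb{C})\setminus\mathscr{B}$ coïncide avec l'ordre de Bruhat inversé sur $\mathfrak{I}_{N}$, et la proposition \textbf{(\ref{prop:intervalle})} qui identifie $\mathfrak{I}_{N}^{s}$ à un intervalle de $\mathfrak{I}_{N}$ isomorphe (comme ensemble ordonné, via $\sigma$) au groupe symétrique $\mathfrak{S}_{n}$, avec $s_{0}$ pour élément minimal et $s_{\text{max}}$ pour élément maximal. Ici $I(\delta,n)$ désigne le module standard \textbf{(\ref{eq:modulemaximal})}, c'est-à-dire $X(s_{0})$ au sens de \textbf{(\ref{eq:modulestandardspeh})}, puisque $s_{0}$ est le paramètre associé à $\textbf{Speh}(\delta,n)$.

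D'abord je rappellerais le critère général de Beilinson-Bernstein, point \textbf{ii.} de la proposition de la section \textbf{2} : pour $\gamma,\gamma'\in\mathscr{P}_{F}$, on a $\overline{X(\gamma)}\in\mathrm{J.H}(X(\gamma'))$ seulement si $\gamma\leq\gamma'$ pour le $\mathcal{G}$-ordre de Bruhat de Vogan. Le point crucial est d'identifier ce $\mathcal{G}$-ordre de Vogan sur $\Pi_{F}(\mathbf{GL}(N,\mathbb{R}))$ avec l'ordre combinatoire sur $\mathfrak{I}_{N}^{\bullet,\pm}$, via la bijection $\mathfrak{I}_{N}^{\bullet,\pm}\cong\mathscr{P}_{F}$. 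Le $\mathcal{G}$-ordre de Bruhat sur $\mathscr{P}_{F}$ se lit sur les $K_{\mathbb{C}}$-orbites sur $\mathscr{B}$, et d'après la proposition de Richardson-Springer citée, l'ordre de clôture sur ces orbites se traduit en l'ordre de Bruhat inversé sur $\mathfrak{I}_{N}$. Ainsi $\overline{X}(s)$ apparaît comme sous-quotient de $X(s_{0})$ seulement si l'orbite de $s$ est dans la clôture de celle de $s_{0}$, ce qui équivaut à $s_{0}\leq_{B}s$ dans $\mathfrak{I}_{N}$. Comme $s_{0}$ est l'élément minimal de l'intervalle $\mathfrak{I}_{N}^{s}$ et $s_{\text{max}}$ son maximal, et que cet intervalle est exactement $\{s\in\mathfrak{I}_{N}: s_{0}\leq_{B}s\leq_{B}s_{\text{max}}\}$, la condition $s_{0}\leq_{B}s$ force $s\in\mathfrak{I}_{N}^{s}\cong\mathfrak{S}_{n}$. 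Cela donne l'implication : si $\pi$ est un sous-quotient irréductible de $I(\delta,n)$, alors $\pi\cong\overline{X}(s)$ pour un $s\in\mathfrak{S}_{n}$.

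Pour la réciproque, il faut montrer que chaque $\overline{X}(s)$, $s\in\mathfrak{S}_{n}\cong\mathfrak{I}_{N}^{s}$, apparaît effectivement dans $\mathrm{J.H}(X(s_{0}))$. Ici j'invoquerais la résolution de Johnson de $\textbf{Speh}(\delta,n)=\overline{X}(s_{0})$ décrite dans l'introduction : tous les modules standard $X(s)$ avec $s\in\mathfrak{S}_{n}$ interviennent dans cette résolution, ce qui fournit l'identité dans le groupe de Grothendieck exprimant $\overline{X}(s_{0})$ comme combinaison alternée des $X(s)$, $s\in\mathfrak{S}_{n}$ ; combinée au fait que $X(s_{0})$ a pour facteurs de composition exactement les $\overline{X}(s)$ avec $s_{0}\leq_{B}s$ (via l'argument de clôture d'orbites ci-dessus appliqué dans les deux sens, en utilisant que l'intervalle $[s_{0},s_{\text{max}}]$ est précisément $\mathfrak{I}_{N}^{s}$), on conclut que les sous-quotients irréductibles de $X(s_{0})$ sont exactement les $\overline{X}(s)$ pour $s\in\mathfrak{I}_{N}^{s}$, c'est-à-dire pour $s$ parcourant une permutation de $\mathfrak{S}_{n}$.

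Le point délicat me semble être la traduction précise entre le $\mathcal{G}$-ordre de Bruhat de Vogan sur $\mathscr{P}_{F}$ et l'ordre de Bruhat inversé sur $\mathfrak{I}_{N}$ : il faut vérifier que les signes $f_{\eta}$ aux points fixes n'interviennent pas (ils ne changent pas l'orbite sous-jacente), et que le critère « $\gamma\leq\gamma'$ seulement si $\overline{X(\gamma)}\in\mathrm{J.H}(X(\gamma'))$ » se comporte bien vis-à-vis de l'induction parabolique définissant les $X(s)$ en \textbf{(\ref{eq:modulestandardspeh})}. Une fois ceci admis — ce qui est essentiellement la proposition de Richardson-Springer jointe à la description de $\mathscr{P}_{F}$ en termes d'orbites — le reste est formel et s'appuie entièrement sur la structure d'intervalle de $\mathfrak{I}_{N}^{s}$ établie dans la proposition \textbf{(\ref{prop:intervalle})}.
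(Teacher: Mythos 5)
Votre argument pour le sens direct est correct et suit exactement la logique que le papier laisse implicite : la condition n\'ecessaire $\overline{X(\gamma)}\in\mathrm{J.H}(X(\gamma'))\Rightarrow\gamma\leq\gamma'$ pour l'ordre de Vogan, sa traduction via Richardson--Springer en ordre de Bruhat invers\'e sur $\mathfrak{I}_{N}$, la structure d'intervalle $[s_{0},s_{\text{max}}]=\mathfrak{I}_{N}^{s}$ de la proposition \textbf{(\ref{prop:intervalle})} ($s_{\text{max}}$ \'etant le maximum global, la seule contrainte effective est $s_{0}\leq_{B}\eta$), et l'observation que les involutions de $\mathfrak{I}_{N}^{s}$ sont sans point fixe, de sorte que la donn\'ee de signe $f_{\eta}$ est vide. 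C'est bien ce que le papier entend par \og Cette derni\`ere proposition unie \`a la proposition \textbf{(\ref{prop:intervalle})} a comme cons\'equence le r\'esultat suivant \fg.

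Le sens r\'eciproque n'est en revanche pas \'etabli tel que vous le r\'edigez. Vous invoquez \og le fait que $X(s_{0})$ a pour facteurs de composition exactement les $\overline{X}(s)$ avec $s_{0}\leq_{B}s$, via l'argument de cl\^oture d'orbites ci-dessus appliqu\'e dans les deux sens \fg\ : c'est circulaire, car l'argument de cl\^oture d'orbites ne donne qu'une implication (sous-quotient $\Rightarrow$ inclusion d'orbites), jamais la r\'eciproque, et le \og fait \fg\ invoqu\'e est pr\'ecis\'ement l'\'enonc\'e \`a prouver. La r\'esolution de Johnson ne fournit que l'identit\'e $[\overline{X}(s_{0})]=\sum_{s\in\mathfrak{S}_{n}}(-1)^{l_{\mathfrak{S}}(s)}[X(s)]$ dans le groupe de Grothendieck, dont on ne peut pas extraire directement les multiplicit\'es $m(\overline{X}(s),X(s_{0}))$ sans conna\^itre d\'ej\`a la matrice de changement de base. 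L'ingr\'edient r\'eellement n\'ecessaire est la non-annulation des polyn\^omes de Kazhdan--Lusztig--Vogan : d'apr\`es le corollaire \textbf{(7.2)} de \textbf{\cite{VoganIII}}, $m(\overline{X(\gamma)},X(\gamma'))=P_{\gamma,\gamma'}(1)$, et $P_{\gamma,\gamma'}\neq 0$ d\`es que $\gamma\leq\gamma'$ --- c'est exactement le m\'ecanisme que le papier explicite pour $\mathbf{U}(p,q)$ dans la proposition \textbf{(\ref{prop:multipliciteinvo})} et qui s'applique \`a l'identique ici; g\'eom\'etriquement, cela revient \`a dire que le $\mathcal{D}$-module de Beilinson--Bernstein associ\'e \`a $X(\gamma)$ a pour support toute la cl\^oture $\overline{Q_{\gamma}}$, de sorte que chaque orbite de cette cl\^oture porte au moins un facteur de composition. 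Le papier lui-m\^eme ne d\'etaille pas ce sens, mais votre tentative via la r\'esolution de Johnson, telle quelle, tourne en rond.
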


Dans la section pr\'ecédent on a vu que l'action de $\theta_{N}$ sur l'ensemble
$\mathfrak{I}_{N}^{\bullet,\pm}$ est donn\'ee par 
l'\'equation \textbf{(\ref{eq:actionthetasurInvo})}. 
 En particulier, $\mathfrak{I}_{N}^{s}$ est stabilis\'e par $\theta_{N}$ et comme  
$\mathfrak{I}_{N}^{s}\cong\mathfrak{S}_{n}$, il est facil de voir que l'action induite par $\theta_{N}$ sur ce dernier ensemble est,
pour tout $s\in\mathfrak{S}_{N}$, donn\'ee par,
\begin{align*}
s\mapsto s^{-1}. 
\end{align*}
Par cons\'equent $X(s)\cong X(s)^{\theta_{N}}\cong X(s^{-1})$ si et seulement si 
$s^{2}=$Id, c'est-\`a-dire si et seulement si $s\in\mathfrak{I}_{n}$.\\

Pour finir avec cette section traduisson l'effet du lemme \textbf{(\ref{lem:d2lt})}  
sur l'ensemble des modules standards $X(s),~s\in\mathfrak{I}_{n}$. 
Faisons d'abord noter que, si $s\geq s'$ et $l_{\mathfrak{S}}(s)=l_{\mathfrak{S}}(s')+1$,
alors $s=s't_{i,j}$ pour une certaine mont\'ee libre $(i,j)$, voir point \textbf{iv.} à la fin de la page 12. 
Par cons\'equent si nous ecrivons, 
\begin{align*}
X(s')=\delta_{1}\times\cdots\times\delta_{i}\times\cdots\times\delta_{j}\times\cdots\delta_{n},
\end{align*}
avec,
\begin{align*}
\delta_{i}&=\delta\left(\frac{-p-(n+1)+2i}{2},\frac{-p-(n+1)+2s'(i)}{2}\right),\\ 
\delta_{j}&=\delta\left(\frac{-p-(n+1)+2j}{2},\frac{-p-(n+1)+2s'(j)}{2}\right). 
\end{align*}
Pour $X(s)$ on a, 
\begin{align*}
X(s)=\delta_{1}\times\cdots\times\delta_{i}\times\cdots\times\delta_{j}\times\cdots\delta_{n} 
\end{align*}
avec,
\begin{align*}
\delta_{i}&=\delta\left(\frac{-p-(n+1)+2i}{2},\frac{-p-(n+1)+2s'(j)}{2}\right),\\ 
\delta_{j}&=\delta\left(\frac{-p-(n+1)+2j}{2},\frac{-p-(n+1)+2s'(i)}{2}\right). 
\end{align*}
Consid\'erons maintenant $s,s'\in\mathfrak{I}_{n}$ avec 
$s\geq s'$ et $l_{\mathfrak{I}}(s)=l_{\mathfrak{I}}(s')+1$. 
Alors pour $X(s)$ et $X(s')$ on a trois posibilit\'es,\\

\textbf{i.} Si $l_{\mathfrak{S}}(s')=l_{\mathfrak{S}}(s)-1$ 
il existe une mont\'ee libre $(i,j)$ avec $s'(i)=j$ de fa\c con \`a ce que, 
$$s=s't_{i,j}.$$ 
Par cons\'equent si $X(s')=\times_{k=1}^{n}\delta_{k}'$ pour $X(s)$ on obtient 
$X(s)=\times_{k=1}^{n}\delta_{k}$ avec $\delta_{k}'=\delta_{k}$ si $k\neq i,j$ et, 
\begin{align*}
\delta_{i}'&=\delta\left(\frac{-p-(n+1)+2i}{2},-\frac{-p-(n+1)+2i}{2}\right),\\ 
\delta_{j}'&=\delta\left(\frac{-p-(n+1)+2j}{2},-\frac{-p-(n+1)+2j}{2}\right). 
\end{align*}

\textbf{ii.} Si $l_{\mathfrak{S}}(s')=l_{\mathfrak{S}}(s)-2$ alors pour $X(s)$ et $X(s')$ on a une des deux posibilit\'es suivants.\\
\textbf{a.)} Il existe une mont\'ee libre $(i,j)$ avec $s'(i)\neq i$ et $s'(j)\neq j$
de fa\c con \`a ce que, 
$$s=s't_{i,j}t_{s'(j),s'(i)}.$$ 
Par cons\'equent si  $X(s')=\times_{k=1}^{n}\delta_{k}'$ pour $X(s)$ on obtient 
$X(s)=\times_{k=1}^{n}\delta_{k}$ avec $\delta_{k}'=\delta_{k}$ si $k\neq i,j,s'(i),s'(j)$ et, 
\begin{align*}
\delta_{i}&=\delta\left(\frac{-p-(n+1)+2i}{2},-\frac{-p-(n+1)+2s'(j)}{2}\right), \\
\delta_{s'(i)}&=\delta\left(\frac{-p-(n+1)+2s'(i)}{2},-\frac{-p-(n+1)+2j}{2}\right),\\ 
\delta_{j}&=\delta\left(\frac{-p-(n+1)+2j}{2},-\frac{-p-(n+1)+2s'(i)}{2}\right), \\
\delta_{s'(j)}&=\delta\left(\frac{-p-(n+1)+2s'(j)}{2},-\frac{-p-(n+1)+2i}{2}\right). 
\end{align*}
\textbf{b.} Il existe une mont\'ee libre $(i,j)$ avec $s'(i)\neq i$ et $s'(j)=j$ de fa\c con \`a ce que,
\begin{align*} 
s=s't_{i,j}t_{j,s'(i)}\text{ ou }s=s't_{i,j}t_{i,s'(i)}. 
\end{align*}
Par cons\'equent si $X(s')=\times_{k=1}^{n}\delta_{k}$, pour $X(s)$ on obtient 
$X(s)=\times_{k=1}^{n}\delta_{k}$ avec $\delta_{k}'=\delta_{k}$ si $k\neq i,j,s'(i)$ et, 
\begin{align*}
\delta_{i}&=\delta\left(\frac{-p-(n+1)+2i}{2},\frac{-p-(n+1)+2j}{2}\right), \\
\delta_{s'(i)}&=\delta\left(\frac{-p-(n+1)+2s'(i)}{2},\frac{-p-(n+1)+2s'(i)}{2}\right),\\ 
\delta_{j}&=\delta\left(\frac{-p-(n+1)+2j}{2},\frac{-p-(n+1)+2i}{2}\right),\text{ si }s=s't_{i,j}t_{j,s'(i)}, 
\end{align*}
ou,
\begin{align*}
\delta_{i}&=\delta\left(\frac{-p-(n+1)+2i}{2},\frac{-p-(n+1)+2i}{2}\right), \\
\delta_{s'(i)}&=\delta\left(\frac{-p-(n+1)+2s'(i)}{2},\frac{-p-(n+1)+2j}{2}\right),\\ 
\delta_{j}&=\delta\left(\frac{-p-(n+1)+2j}{2},\frac{-p-(n+1)+2s'(i)}{2}\right), \text{ si }s=s't_{i,j}t_{i,s'(i)}. 
\end{align*}
\textbf{iii.} Si $l_{\mathfrak{S}}(s')=l_{\mathfrak{S}}(s)-3$ il existe une mont\'ee libre $(i,j)$ avec $s'(i)\neq i$,  $s'(j)=j$ 
et un entier $l\in\{1,\cdots,n\}$ tel que $s'(l)=l$ et $(l,s(i))$ est une mont\'ee libre,
de fa\c con \`a ce que, 
$$s=s't_{i,j}t_{l,s(i)}t_{j,l}.$$ 
Par cons\'equent si $X(s')=\times_{k=1}^{n}\delta_{k}'$ pour $X(s)$ on obtient 
$X(s)=\times_{k=1}^{n}\delta_{k}$ avec $\delta_{k}'=\delta_{k}$ si $k\neq i,s(i),j,l$ et, 
\begin{align*}
\delta_{i}&=\delta\left(\frac{-p-(n+1)+2i}{2},-\frac{-p-(n+1)+2j}{2}\right), \\
\delta_{j}&=\delta\left(\frac{-p-(n+1)+2j}{2},-\frac{-p-(n+1)+2i}{2}\right),\\ 
\delta_{l}&=\delta\left(\frac{-p-(n+1)+2l}{2},-\frac{-p-(n+1)+2s'(i)}{2}\right), \\
\delta_{s'(i)}&=\delta\left(\frac{-p-(n+1)+2s'(i)}{2},\frac{-p-(n+1)+2l}{2}\right). 
\end{align*}
\section{L'ensembles des représentations irréductibles du groupe unitaire}
\subsection{Paramétrisation de Beilinson-Bernstein pour $\textbf{U}(p,q,\mathbb{R})$}
Int\'eressons-nous \`a pr\'esent au groupe unitaire $\mathbf{U}(p,q)$. Posons $p+q=N$
et fixons une repr\'esentation $F$ de dimension finie de $\mathbf{U}(p,q,\mathbb{R})$.
Rappelons que l'ensemble de repr\'esentations 
$\Pi_{F}(\mathbf{U}(p,q,\mathbb{R}))$ est param\`etre par l'ensemble $\mathscr{P}_{F}$
des param\`etres de Beilinson-Bernstein. 
Celui-ci est constitu\'e par des couples $(Q,\chi)$ avec $Q$ une
$\mathbf{GL}(p,\mathbb{C})\times \mathbf{GL}(q,\mathbb{C})$-orbite dans 
$\mathscr{B}$, la variete de drapeaux de $\mathfrak{u}(p,q)$, et $\chi$ un fibr\'e en droite
holomorphe plat $\mathbf{GL}(p,\mathbb{C})\times \mathbf{GL}(q,\mathbb{C})$-homog\`ene sur 
$Q$. Pour $\mathbf{U}(p,q)$ un ph\'enom\`ene assez remarquable se produit: Tout fibr\'e en droite 
est conjugu\'e au fibr\'e trivial, d'o\`u $\mathscr{P}_{F}\cong\mathbf{GL}(p,\mathbb{C})\times \mathbf{GL}(q,\mathbb{C})\setminus\mathscr{B}$
et donc,
\begin{align}\label{eq:premierparamunitaire}
\Pi_{F}(\mathbf{U}(p,q,\mathbb{R}))\cong\mathbf{GL}(p,\mathbb{C})\times \mathbf{GL}(q,\mathbb{C})\setminus\mathscr{B}. 
\end{align}
Donnons comme pour $\mathbf{GL}(N)$ un param\'etrage combinatoire de 
$\mathbf{GL}(p,\mathbb{C})\times \mathbf{GL}(q,\mathbb{C})\setminus\mathscr{B}$.
Introduisons pour cela l'ensemble $\mathfrak{I}_{N}^{p,q,\pm}$
dont les \'el\'ements sont des couples $(\eta, f_{\eta})$, o\`u $\eta\in\mathfrak{I}_{N}$
et $f_{\eta}$ est une application de l'ensemble des points fixes de $\eta$ \`a valeurs
dans $\{\pm1\}$ qui v\'erifie, si l'on note $m$ le nombre de $2$-cycles dans la d\'ecomposition en cycles de $\eta$
\begin{align}\label{eq:fsupplementaire}
m+|f_{\eta}^{-1}(\{1\})|=p,\quad m+|f_{\eta}^{-1}(\{-1\})|=q. 
\end{align}
Adoptons une notation symbolique pour les éléments de 
$\mathfrak{I}_{n}^{p,q,\pm}$. Soit $\overline{\eta}=(\eta,f_{\eta})\in\mathfrak{I}_{n}^{p,q,\pm}$, 
alors $\overline{\eta}$ on l'identifie à la $n$-tuple $(c_1,\cdots,c_{n})$, où $c_{i}=f_{\eta}(i)=\pm$ si $\eta(i)=i$, et 
$c_{i}=c_{j}\in\mathbb{N}$ si $\eta(i)=j$, avec tout nombre naturel apparaissent exactament deux fois. 

Par exemple, pour $(p,q)=(2,1)$, on a, 
$$\mathfrak{I}_{3}^{2,1,\pm}=\{(1+1),(11+),(+11),(++-),(+-+),(++-)\}.$$
A.Yamamoto, dans \textbf{\cite{Yamamoto}}, th\'eor\`eme \textbf{(2.2.8)}, donne une param\'etrisation  
de $\mathbf{GL}(p,\mathbb{C})\times \mathbf{GL}(q,\mathbb{C})\setminus\mathscr{B}$ 
par $\mathfrak{I}_{N}^{p,q,\pm}$. Ce qui a comme cons\'equence la suite de bijections suivants,
\begin{align}\label{eq:prgu}
\mathfrak{I}_{N}^{p,q,\pm}\cong \mathbf{GL}(p,\mathbb{C})\times \mathbf{GL}(q,\mathbb{C})\setminus\mathscr{B} \cong \Pi_{F}(\mathbf{U}(p,q,\mathbb{R})). 
\end{align}
Dans cette description, la partition de $\Pi_{F}(\mathbf{U}(p,q,\mathbb{R}))$
en $L$-paquets est particulièrement simple, deux param\`etres $\overline{\eta}$ et $\overline{\eta}'\in\mathfrak{I}_{N}^{p,q,\pm}$ 
correspondent \`a des repr\'esentations dans le m\^eme $L$-paquet si et seulement si les involutions sous-jacentes $\eta$ et $\eta'$ dans $\mathfrak{I}_{N}$ sont \'egales. 

De plus l'isomorphisme $\mathfrak{I}_{N}^{p,q,\pm}\cong \mathbf{GL}(p,\mathbb{C})\times \mathbf{GL}(q,\mathbb{C})\setminus\mathscr{B}$
permet par transport de structure, d\'efinir \`a partir de l'ordre naturel sur 
$\mathbf{GL}(p,\mathbb{C})\times \mathbf{GL}(q,\mathbb{C})\setminus\mathscr{B}$ un ordre sur $\mathfrak{I}_{N}^{p,q,\pm}$. 
Cet ordre sur $\mathfrak{I}_{N}^{p,q,\pm}$ induit un ordre sur $\mathfrak{I}_{N}$ via la projection naturelle
$\mathfrak{I}_{N}^{p,q,\pm}\rightarrow\mathfrak{I}_{N}$ et il est possible de montrer 
qu'il co\"incide avec le ordre de Bruhat, $\leq_{B}$, sur $\mathfrak{I}_{N}$. Notons, 
comme expliqué dans \textbf{\cite{Yamamoto}}, que les ar\^etes du diagramme de Hasse de $\mathfrak{I}_{N}^{p,q,\pm}$ 
sont obtenues lorsque l'écriture symbolique d'un élément $\overline{\eta}'\in\mathfrak{I}_{N}^{p,q,\pm}$ 
est obtenue à partir de celle pour $\overline{\eta}\in\mathfrak{I}_{N}^{p,q,\pm}$ soit en remplaçant
deux symboles consécutifs de la forme $aa$, $a\in\mathbb{N}$ par $(+-)$ ou $(-+)$, soit en permutant
deux symboles consécutifs qui ne sont pas tout deux des signes, il faut ensuite ajouter toutes les ar\^etes
obtenues par la condition d'echange de \textbf{\cite{Trapa}} (figure \textbf{(3)}). 
Par exemple, pour $(p,q)=(2,1)$, le diagramme de Hasse de 
$\mathfrak{I}_{3}^{2,1,\pm}$ est donné par,
\begin{align}\label{eq:Hasse}
 \xymatrix {\relax &(1+1)\ar[dl] \ar[dr]&\\ 
(11+)\ar[d]\ar[dr]& \empty &(+11) \ar[dl]\ar[d]\\
(-++)& (+-+)& (++-)}
 \end{align}  
Finissons cette section avec le résultat suivant:
\begin{prop}\label{prop:multipliciteinvo}
Soient $\overline{\eta}=(\eta,f_{\eta}),~
\overline{\eta}'=(\eta',f_{\eta'})\in\mathfrak{I}_{N}^{p,q,\pm}$ 
tels que $l_{\mathfrak{I}}(\eta')=l_{\mathfrak{I}}(\eta)-1$. Alors,
la multiplicité $m(\overline{X(\overline{\eta})},X(\overline{\eta}'))$ 
de $\overline{X(\overline{\eta})}$ dans la suite de Jordan-Holder de ${X(\overline{\eta}')}$, est différent de zéro,
si et seulement si $\overline{\eta'}<_{B}\overline{\eta}$.
\end{prop}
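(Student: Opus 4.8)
The statement to prove is Proposition \ref{prop:multipliciteinvo}: for $\overline{\eta},\overline{\eta}'\in\mathfrak{I}_{N}^{p,q,\pm}$ with $l_{\mathfrak{I}}(\eta')=l_{\mathfrak{I}}(\eta)-1$, the multiplicity $m(\overline{X(\overline{\eta})},X(\overline{\eta}'))$ is nonzero if and only if $\overline{\eta}'<_{B}\overline{\eta}$. Let me sketch how I would prove this.

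First the easy direction. Recall from the general Beilinson-Bernstein/Vogan machinery, spelled out in Proposition 2.1 (ii), that if $\overline{X(\overline{\eta})}$ appears in the Jordan-Hölder series of $X(\overline{\eta}')$, then necessarily $\overline{\eta}'\leq\overline{\eta}$ in the $\mathcal{G}$-Bruhat order. Since we are assuming $\overline{\eta}\neq\overline{\eta}'$ (their $l_{\mathfrak{I}}$-lengths differ), this gives $\overline{\eta}'<_{B}\overline{\eta}$. The point is that the natural order on $\mathbf{GL}(p,\mathbb{C})\times\mathbf{GL}(q,\mathbb{C})\setminus\mathscr{B}$ is, by transport of structure, exactly Vogan's $\mathcal{G}$-Bruhat order under the Beilinson-Bernstein parametrization, and we have just recalled that this order is the one denoted $<_{B}$ on $\mathfrak{I}_{N}^{p,q,\pm}$. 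So one direction follows immediately from the abstract theory.

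For the converse, suppose $\overline{\eta}'<_{B}\overline{\eta}$ with $l_{\mathfrak{I}}(\eta')=l_{\mathfrak{I}}(\eta)-1$. The plan is to show that $\overline{\eta}'$ is covered by $\overline{\eta}$ in the Hasse diagram of $\mathfrak{I}_{N}^{p,q,\pm}$ — i.e. that the length condition on the underlying involutions forces $(\overline{\eta}',\overline{\eta})$ to be a covering edge. Here I would invoke the explicit combinatorial description of the covering relations recalled in the text (from Yamamoto \cite{Yamamoto} and the exchange condition of Trapa \cite{Trapa}): every covering edge of $\mathfrak{I}_{N}^{p,q,\pm}$ either fixes the underlying involution $\eta$ (the case $aa\mapsto (+-)$ or $(-+)$, or permuting a number-symbol with a sign) or changes it by one covering step in $\mathfrak{I}_{N}$. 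A short check on $l_{\mathfrak{I}}$ shows the first kind of edge preserves $l_{\mathfrak{I}}(\eta)$ while the second raises it by exactly one; since here $l_{\mathfrak{I}}$ jumps by one, the edge (if it exists) is of the second kind, and more importantly, since the order is graded by a length function compatible with $l_{\mathfrak{I}}$ on the $\eta$-part, $\overline{\eta}'<_{B}\overline{\eta}$ together with the $l_{\mathfrak{I}}$-gap of $1$ forces $\overline{\eta}$ to cover $\overline{\eta}'$. Then, applying Proposition 2.1 (ii) in the sharper form — when $l(\gamma')=l(\gamma)$ or $l(\gamma)+1$, the multiplicity is exactly $1$ — or, at the geometric level, the standard fact that for the Beilinson-Bernstein modules a covering relation in the orbit order yields multiplicity exactly one (the closure of the smaller orbit meets the boundary of the larger in codimension one, so the relevant local intersection cohomology / Kazhdan-Lusztig-Vogan polynomial contributes $1$), we conclude $m(\overline{X(\overline{\eta})},X(\overline{\eta}'))\neq 0$ (indeed $=1$).

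The main obstacle is the bookkeeping in the converse direction: one must be sure that the graded poset structure of $\mathfrak{I}_{N}^{p,q,\pm}$ interacts correctly with both the underlying-involution projection $\mathfrak{I}_{N}^{p,q,\pm}\to\mathfrak{I}_{N}$ and with $l_{\mathfrak{I}}$, so that "$\overline{\eta}'<_{B}\overline{\eta}$ and $l_{\mathfrak{I}}$ differs by $1$" genuinely implies a covering relation in $\mathfrak{I}_{N}^{p,q,\pm}$ rather than a longer chain all of whose intermediate elements project to the same $\eta$ or $\eta'$. This is where the explicit edge description (the two types of covering moves, plus the Trapa exchange edges) and the length computation must be combined carefully; once that is in hand, the multiplicity-one input from Vogan's theory / Kazhdan-Lusztig-Vogan theory closes the argument. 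I would also remark that this Proposition is exactly the tool needed downstream to guarantee that the Johnson morphisms $\varphi^{J}_{\overline{\eta},\overline{\eta}'}$ between consecutive standard modules in the unitary-group resolution \eqref{eq:suiteJohnsonunitaire} are nonzero, which is the content invoked in Lemma \ref{lem:introduc}.
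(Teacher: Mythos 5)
Your easy direction is fine: Proposition 2.1 (ii), read with the Beilinson--Bernstein dictionary identifying the $\mathcal{G}$-Bruhat order on $\mathscr{P}_F$ with the order carried over to $\mathfrak{I}_N^{p,q,\pm}$, gives the ``only if''. The converse, however, has a real gap. Proposition 2.1 (ii), which you invoke, is a \emph{conditional} statement: it says that \emph{if} $\overline{X(\overline{\eta})}$ occurs in $\mathrm{J.H}(X(\overline{\eta}'))$ and the lengths differ by at most one, \emph{then} the multiplicity is exactly one. It does not assert that the multiplicity is nonzero for a covering pair, which is precisely what you need. You then fall back on ``the standard fact'' that a codimension-one boundary stratum contributes multiplicity exactly one at the level of intersection cohomology; but that fact is not among the paper's preliminaries --- it is essentially the content of what has to be proved here, and the paper establishes it by an explicit computation. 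You have also left the first reduction (that $\overline{\eta}'<_B\overline{\eta}$ together with $l_{\mathfrak{I}}(\eta')=l_{\mathfrak{I}}(\eta)-1$ forces a covering edge of $\mathfrak{I}_N^{p,q,\pm}$) as an acknowledged obstacle rather than a proved lemma, and this reduction is not free: it requires knowing that the Vogan length on parameters for $\mathbf{U}(p,q)$ is compatible with $l_{\mathfrak{I}}$ of the underlying involution.

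The paper takes a different and more direct route that bypasses the covering-relation question entirely. It starts from Vogan's character formula (Corollary 7.2 of \cite{VoganIII}), which expresses $\overline{X(\overline{\eta}')}$ in the Grothendieck group as an alternating sum $\sum (-1)^{l(\eta)-l(\eta')}P_{\overline{\eta},\overline{\eta}'}(1)\,X(\overline{\eta})$ over the Kazhdan--Lusztig--Vogan polynomials; inverting this triangular system immediately identifies $m(\overline{X(\overline{\eta})},X(\overline{\eta}'))$ with $P_{\overline{\eta},\overline{\eta}'}(1)$ in the length-one-gap case. The ``nonzero iff $\overline{\eta}'<_B\overline{\eta}$'' statement is then read off from Vogan's recursive formula (Proposition 6.14 of \cite{VoganIII}), which moreover gives $P_{\overline{\eta},\overline{\eta}'}=1$. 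So the paper's proof is a concrete KLV-polynomial computation, whereas yours tries to reduce to a covering-relation statement and then appeals to an unproved geometric principle. If you want to salvage your approach, you would need to (i) actually prove the covering-relation reduction by combining the Yamamoto/Trapa edge description with the length compatibility, and (ii) replace the appeal to Proposition 2.1 (ii) by a genuine lower-bound argument, which in practice means running the same KLV computation the paper does.
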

\begin{proof}[\textbf{Preuve}.]
D'après le corollaire \textbf{(7.2)} de \textbf{\cite{VoganIII}},  pour $\overline{X(\overline{\eta}')}$ on a dans le groupe de Grothendieck
des représentations de $\mathbf{U}(p,q)(\mathbb{R})$, l'égalité,
\begin{align*}
\overline{X(\overline{\eta}')}=\sum_{\substack{\overline{\eta}\in\mathfrak{I}_{N}^{p,q,\pm}\\ l_{\mathfrak{I}}(\eta')<l_{\mathfrak{I}}(\eta)}}(-1)^{l(\eta)-l(\eta')}P_{\overline{\eta},\overline{\eta}'}(1)X(\overline{\eta}),
\end{align*}
où $P_{\overline{\eta},\overline{\eta}'}$ est le polyn\^ome de Kazhdan-Lusztig-Vogan associé au couple $(\overline{\eta},\overline{\eta}')$.
Comme $P_{\overline{\eta}',\overline{\eta}'}=1$, pour $X(\overline{\eta}')$ on obtient,
 \begin{align*}
X(\overline{\eta}')&=\overline{X(\overline{\eta}')}+\sum_{\substack{\overline{\eta}\in\mathfrak{I}_{N}^{p,q,\pm}\\l_{\mathfrak{I}}(\eta')=l_{\mathfrak{I}}(\eta)-1}}
P_{\overline{\eta},\overline{\eta}'}(1)X(\overline{\eta})+
\sum\limits_{\substack{\overline{\eta}\in\mathfrak{I}_{N}^{p,q,\pm}\\ l_{\mathfrak{I}}(\eta')<l_{\mathfrak{I}}(\eta)-1}}(-1)^{l_{\mathfrak{I}}(\eta)-l_{\mathfrak{I}}(\eta)'+1}P_{\overline{\eta},\overline{\eta}'}(1)X(\overline{\eta})\\
&=\overline{X(\overline{\eta}')}+\sum_{\substack{\overline{\eta}\in\mathfrak{I}_{N}^{p,q,\pm}\\l_{\mathfrak{I}}(\eta')=l_{\mathfrak{I}}(\eta)-1}}P_{\overline{\eta},\overline{\eta}'}(1)\overline{X(\overline{\eta})}\\
&\qquad\qquad\qquad\qquad\qquad+\sum_{\substack{\overline{\eta}\in\mathfrak{I}_{N}^{p,q,\pm}\\l_{\mathfrak{I}}(\eta')=l_{\mathfrak{I}}(\eta)-1}}\sum\limits_{\substack{\overline{\zeta}\in\mathfrak{I}_{N}^{p,q,\pm}\\ l_{\mathfrak{I}}(\eta)<l_{\mathfrak{I}}(\zeta)-1}}(-1)^{l_{\mathfrak{I}}(\zeta)-l_{\mathfrak{I}}(\eta)+1}P_{\overline{\zeta},\overline{\eta}}(1)X(\overline{\zeta})\\
&\qquad\quad~~~+\sum\limits_{\substack{\overline{\eta}\in\mathfrak{I}_{N}^{p,q,\pm}\\ l_{\mathfrak{I}}(\eta')<l_{\mathfrak{I}}(\eta)-1}}(-1)^{l_{\mathfrak{I}}(\eta)-l_{\mathfrak{I}}(\eta)'+1}P_{\overline{\eta},\overline{\eta}'}(1)X(\overline{\eta}).
\end{align*}
D'où, $m(\overline{X(\overline{\eta})},X(\overline{\eta}'))=P_{\overline{\eta},\overline{\eta}'}(1)$ pour tout $\overline{\eta}$ 
avec $l_{\mathfrak{I}}(\eta')=l_{\mathfrak{I}}(\eta)-1$. 
Mais, de la proposition \textbf{(6.14)} de \textbf{\cite{VoganIII}} on peut en déduire pour tout $\overline{\eta}$ 
avec $l_{\mathfrak{I}}(\eta')=l_{\mathfrak{I}}(\eta)-1$ que, $P_{\overline{\eta},\overline{\eta}'}(1)\neq 0$ si et seulement si 
$\overline{\eta}'<_{B}\overline{\eta}$. On peut dire m\^eme plus,
car d'après la formule recursive donne par cette proposition on conclut pour $\overline{\eta}'<_{B}\overline{\eta}$ avec $l_{\mathfrak{I}}(\eta')=l_{\mathfrak{I}}(\eta)-1$
que $P_{\overline{\eta},\overline{\eta}'}=P_{\overline{\eta}',\overline{\eta}'}=1$. 
\end{proof}
\section{Un complexe diff\'erentiel pour les modules de Speh \text{$\theta$-invariants}}
\subsection{Construction du complexe diff\'erentiel}
Soit $p,n\in\mathbb{N}^{\times}$ tels que $p>n-1$ et soit 
$\textbf{Speh}(\delta,n)$ la repr\'esentation de Speh de $\mathbf{GL}(2n,\mathbb{R})$ d\'efinie dans la section \textbf{(4.3)}.

Dans la section \textbf{(4.3)}, nous avons vu que les repr\'esentations plus
petits que $\textbf{Speh}(\delta,n)$ correspondent, 
via la bijection $\mathfrak{I}_{2n}^{\bullet,\pm}\longrightarrow\mathbf{GL}(2n,\mathbb{R})$, 
au sous-ensemble $\mathfrak{I}_{2n}^{s}\cong\mathfrak{S}_{n}$ de $\mathfrak{I}_{2n}^{\bullet,\pm}$.
D'apr\`es le th\'eor\`eme \textbf{(1)} de la section \textbf{(6)} de \textbf{\cite{Johnson}}, 
ces param\`etres coincident avec ceux des repr\'esentations standard intervenant dans
la r\'esolution de Johnson de $\textbf{Speh}(\delta,n)$. 
 Par cons\'equent pour $\textbf{Speh}(\delta,n)$ 
 la r\'esolution de Johnson s'\'ecrit,
\begin{align}
0\rightarrow \textbf{Speh}(\delta,n)\rightarrow I(\delta,n)\rightarrow X_{1}\rightarrow\cdots\rightarrow
X_{i-1}\xrightarrow{\phi_{i}^{J}} X_{i}\cdots\rightarrow X_{\frac{n(n-1)}{2}}\rightarrow 0  
\end{align}
o\`u,
\begin{align*}
X_{i}=\oplus_{s\in\mathfrak{S}_{n},l_{\mathfrak{S}}(s)=i}X(s) 
\end{align*}
avec pour tout $s\in\mathfrak{S}_{n}$, $X(s)$ d\'efini par \textbf{(\ref{eq:modulestandardspeh})} 
et pour chaque $i=1,\cdots,\frac{n(n-1)}{2}$, $\phi_{i}^{J}:X_{i-1}\longrightarrow X_{i}$ d\'efini
par,
\begin{align}\label{eq:morJ}
\phi_{i}^{J}=\oplus_{s\in\mathfrak{S}_{n},~l_{\mathfrak{S}}(s)=j}\oplus_{s'\in\mathfrak{S}_{n},~l_{\mathfrak{S}}(s)=j-1}\phi_{s,s'}^{J},\quad\phi_{s,s'}^{J}:X(s')\longrightarrow X(s).
\end{align}
Le morphismme $\phi_{s,s'}^{J}$ on l'appelle le morphisme de Johnson entre $X(s')$ et $X(s)$. 
D'apr\`es le r\'esulat suivant tout morphisme entre $X(s')$ et $X(s)$ est 
proportionnel au morphisme de Johnson.
\begin{lem}\label{lem:defmorsection2.5}
Soient $s$ et $s'$ une pair d'\'el\'ements de $\mathfrak{S}_{n}$, verifiant $l_{\mathfrak{S}}(s')<l_{\mathfrak{S}}(s)$.
Si $s'\nleq_{B} s$ alors $\dim\mathrm{Hom}(X(s'),X(s))=m(\overline{X}(s),X(s'))=0$. 
Si $s' \leq_{B} s$ et $l(s)<l(s')-1$ alors $\dim \mathrm{Hom}(X(s'),X(s))=m(\overline{X}(s),X(s'))=1$ 
et tout morphisme non nul de cet espace est surjectif.
\end{lem}
Consid\'erons maintenant l'ensemble $\mathfrak{I}_{n}$ des involutions de $\mathfrak{S}_{n}$.
Rappelons que $\mathfrak{I}_{n}$, muni de l'ordre induit par celui de $\mathfrak{S}_{n}$,
est lui aussi gradu\'e par une fonction longueur que nous avons not\'e $l_{\mathfrak{I}}$.

Nous avons vu dans la section \textbf{(4.2)} que pour tout $s\in\mathfrak{I}_{n}$ 
le modules standard  $X(s)$ associ\'e, est $\theta_{N}$-invariant. 
D\'efinissons donc pour tout $i\in[1,l_{\mathfrak{I}}(s_{\text{max}})]$ l'espace,
\begin{align*}
X_{i,\theta}:=\oplus_{s\in\mathfrak{I}_{n},l_{\mathfrak{I}}(s)=i}X(s). 
\end{align*}
L'objectif de cette section est de construire pour tout $i\in[1,l_{\mathfrak{I}}(s_{\text{max}})]$ un morphisme, 
\begin{align}\label{eq:morphcasserie}
\phi_{j}&:X_{j-1,\theta}\longrightarrow X_{j,\theta},\\
\phi_{j}&=\oplus_{s\in\mathfrak{I}_{n},~l_{\mathfrak{I}}(s)=j-1}\oplus_{s'\in\mathfrak{I}_{n},~l_{\mathfrak{I}}(s)=j}\phi_{s,s'},\quad
\phi_{s,s'}^{\theta}:X(s')\longrightarrow X(s)
\end{align} 
de mani\`ere d'obtenir un complexe diff\'erentiel,
\begin{align}\label{eq:suite8}
0\rightarrow \textbf{Speh}(\delta,n)\rightarrow I(\delta,n)\rightarrow\cdots\rightarrow X_{i-1,\theta}\xrightarrow{\phi_{i}} X_{i,\theta}
\rightarrow \cdots \rightarrow X_{l_{\mathfrak{I}}(s_{\text{max}}),\theta}\rightarrow 0.
\end{align}
Pour d\'efinir le morphisme $\phi_{i},~i\in[1,l_{\mathfrak{I}}(s_{\text{max}})]$ 
et ainsi pouvoir construire \textbf{(\ref{eq:suite8})} on proc\`ede de la mani\`ere suivant:\\

Pour tout $s\in\mathfrak{S}_{n}$ soit $\Omega_{s}$ la fonctionnelle de Whittaker du module standard $X(s)$. 
Prennons une paire $s,~s'$ tel que $l_{\mathfrak{S}}(s')=l_{\mathfrak{S}}(s)-1$ et $s'<_{B}s$. 
D'apr\`es le lemme \textbf{(\ref{lem:defmorsection2.5})} tout morphisme non nul $\phi\in\text{Hom}(X(s'),X(s))$ est surjective. Par cons\'equent  
la composition, 
\begin{align*}
\Omega_{s}\circ\phi, 
\end{align*}
d\'efinit une fonctionelle de Whittaker non nulle de $X(s')$ et de l'unicit\'e, multiplication par un scalair pr\`es, 
de la fonctionelle de Whittaker 
(Voir th\'eor\`eme \textbf{(2.2)} de \textbf{\cite{Hashizume}} ou th\'eor\`eme \textbf{(1.1)} de \textbf{\cite{ShahidiDuke}}), 
il existe une constante $C\in\mathbb{C}^{\ast}$ tel que, 
\begin{align*}
C\cdot\Omega_{s}\circ\phi=\Omega_{s'}.  
\end{align*}
Une pr\`emiere etape en la construction de \textbf{(\ref{eq:morphcasserie})} est donn\'ee par la d\'efinition suivant.
\begin{deftn}\label{deftn:etape1}
Soient $s,s'\in\mathfrak{S}_{n}$ v\'erifiant $s'\leq_{B}s$ et 
$l_{\mathfrak{S}}(s')=l_{\mathfrak{S}}(s)-1$. Alors on d\'efinit,
\begin{align}\label{eq:morW}
\phi_{s,s'}^{{wh}}:X(s')\rightarrow X(s),
\end{align}
comme l'unique \'el\'ement de $\text{Hom}(X(s'),X(s))$ tel que,
\begin{align}\label{eq:egmorW}
\Omega_{s}\circ\phi_{s,s'}^{{wh}}=\Omega_{s'}. 
\end{align}
Si au contraire $s'\nleq_{B} s$ on d\'efinit $\phi_{s,s'}^{{wh}}$ comme le morphisme nul.
On fait noter que d'apr\`es le lemme \textbf{(\ref{lem:defmorsection2.5})} pour toute paire $s$, $s'$
avec $l_{\mathfrak{S}}(s')=l_{\mathfrak{S}}(s)-1$ le morphisme $\phi_{s,s'}^{\mathrm{wh}}$ est 
proportionnel au morphisme de Johnson entre $X(s')$ et $X(s)$.
\end{deftn}
Avec le choix de morphisme donn\'ee par la d\'efinition pr\'ec\'edent on obtient le r\'esultat suivant.
\begin{lem}\label{lem:icc}
Soient $s,s'\in\mathfrak{S}_{N}$ v\'erifiant $s\leq_{B}s'$ et $l_{\mathfrak{S}}(s)-l_{\mathfrak{S}}(s')=m$. 
Alors pour toute paire de ch\^aines saturées,
\begin{align*}
s'&<_{B}s_{1}<_{B}\cdots<_{B}s_{m}<s\\ 
s'&<_{B}t_{1}<_{B}\cdots<_{B}t_{m}<s, 
\end{align*}
conduisant de $s$ \`a $s'$ on a l'\'egalit\'e,
\begin{align}\label{eq:egalitecheminelemme}
\phi_{s,s_{n}}^{wh}\circ\cdots\circ\phi_{s_{1},s'}^{{wh}}=\phi_{s,t_{n}}^{wh}\circ\cdots\circ\phi_{t_{1},s'}^{wh}. 
\end{align}
\end{lem}
\begin{proof}[\textbf{Preuve}]
Ce lemme en une conséquence directe du théorème \textbf{(\ref{theo:ELSHEsim})}.
En effet soit $C_{s',s}=\{C_{1},\cdots,C_{n}\}$ l'ensemble de ch\^aines saturées de longueur maximale de l'intervalle $[s',s]$.
L'intervalle $[s',s]$ etant EL-shellable et donc d'après la proposition \textbf{(\ref{theo:EL-SHE})} Shellable, l'ensemble $C_{s',s}$ est un ensemble totalement ordononné 
tel que pour tout ch\^aine $C_{i}\in C_{s',s}$ avec $C_{i}=s'<s_{1}^{i}<\cdots<s_{m-1}^{i}<s$ il existe un entier $k,~1\leq k\leq m-1$ tel que,  
$$s_{l}^{i}=s_{l}^{i+1},\text{ si }l\neq k~\text{ et }~s_{l}^{i}\neq s_{l}^{i+1},  \text{ si }l=k.$$ 
La paire $s_{k}^{i},s_k^{i+1}$ etant d'apr\`es le lemme \textbf{(\ref{lem:d2lt})}, 
les deux uniques éléments de $\mathfrak{S}_{N}$ tels que  $s_{k-1}^{i}<s_{k}^{i},s_k^{i+1}<s_{k+1}^{i}$, on peut d'après la résolution de Johnson écrire,
\begin{align*}
\phi_{s_{k+1}^{i},s_{k}^{i}}^{J}\circ\phi_{s_{k}^{i},s_{k-1}^{i}}^{J}=-\phi_{s_{k+1}^{i},s_{k}^{i+1}}^{J}\circ\phi_{s_{k}^{i+1},s_{k-1}^{i}}^{J}.
\end{align*}
Par conséquent,
\begin{align*}
\phi_{s_{k+1}^{i},s_{k}^{i}}^{{wh}}\circ\phi_{s_{k}^{i},s_{k-1}^{i}}^{{wh}}=-\phi_{s_{k+1}^{i},s_{k}^{i+1}}^{{wh}}\circ\phi_{s_{k}^{i+1},s_{k-1}^{i}}^{{wh}}.
\end{align*}
D'où on obtient,
$$\phi_{s,s_{m-1}^{i}}^{wh}\circ\cdots\circ\phi_{s_{1}^{i},s'}^{{wh}}=\phi_{s,s_{m-1}^{i+1}}^{wh}\circ\cdots\circ\phi_{s_{1}^{i+1},s'}^{wh}$$
Pour finir soit $C_{t}$ la cha\^ine de $C_{s',s}$ définie par \textbf{(\ref{eq:egalitecheminelemme})} et $C_{u}$ celle définie par 
\textbf{(\ref{eq:egalitecheminelemme})}. Sans perte de généralité on peut supposer $C_{t}<C_{u}$. 
Maintenant qu'on a montré la veracité de l'equation \textbf{(\ref{eq:egalitecheminelemme})} pour toute paire de cha\^ines $C_{i}$ et $C_{i+1}$ de $C_{s',s}$ avec $t<i<u$
un simple argument de transitivité nous donne l'égalité \textbf{(\ref{eq:egalitecheminelemme})} pour les cha\^ines $C_{t}$ et $C_{u}$. 

\end{proof}
Revenons \`a pr\'esent \`a l'ensemble des involutions $\mathfrak{I}_{n}$ et donnons
l'annalogue de la d\'efinition \textbf{(\ref{deftn:etape1})} pour $\mathfrak{I}_{n}$.
\begin{deftn}\label{eq:defetape2}
Soient $s,s'\in\mathfrak{I}_{n}$ v\'erifiant $s'<_{B}s$ et $l_{\mathfrak{I}}(s)-l_{\mathfrak{I}}(s')=1$. 
D'apr\`es le lemme \textbf{(\ref{lem:long})} on sait que $l_{\mathfrak{S}}(s)-l_{\mathfrak{S}}(s')\leq 3$. 
Si $l_{\mathfrak{S}}(s)-l_{\mathfrak{S}}(s')>1$, choisissons une cha\^ine d'éléments de $\mathfrak{S}_{n}$ aillent de $s'$ vers $s$, 
\begin{align*}
s'<_{B}s_{1}<_{B}s,&\text{ si }l_{\mathfrak{S}}(s)-l_{\mathfrak{S}}(s')=2,\\
s'<_{B}s_{1}<_{B}s_{2}<s,&\text{ si }l_{\mathfrak{S}}(s)-l_{\mathfrak{S}}(s')=3.
\end{align*}
Alors on d\'efinit, 
\begin{equation}\label{eq:morphisme}
\phi_{s,s'}^{{wh}}:X(s')\longrightarrow X(s),
\end{equation}
comme $\phi_{s,s'}^{{wh}}$ si $l_{\mathfrak{S}}(s)-l_{\mathfrak{S}}(s')=1$ ou comme la composition,
\begin{align*}
\phi_{s,s'}^{{wh}}:=\phi_{s,s_{1}}^{{wh}}\circ\phi_{s_{1},s'}^{{wh}},&\text{ si }l_{\mathfrak{S}}(s)-l_{\mathfrak{S}}(s')=2,\\
\phi_{s,s'}^{{wh}}:=\phi_{s,s_{2}}^{{wh}}\circ\phi_{s_{2},s_{1}}^{{wh}}
\circ\phi_{s_{1},s'}^{{wh}},&\text{ si }l_{\mathfrak{S}}(s)-l_{\mathfrak{S}}(s')=3.
\end{align*}
On fait noter que d'apr\`es le lemme \textbf{(\ref{lem:icc})} 
la d\'efinition de $\phi_{s,s'}^{{wh}}$ ne depend pas du chemin choisi.
Finalement si $s'\nleq s$, 
on d\'efinit $\phi_{s,s'}^{{wh}}:X(s')\longrightarrow X(s)$ comme le morphisme nul.
\end{deftn}
Du lemme \textbf{(\ref{lem:d2lt})} pour toute paire $s,s'\in\mathfrak{I}_{N}$
v\'eerifiant $s'<_{B}s$ et $l_{\mathfrak{S}}(s)-l_{\mathfrak{S}}(s')=2$  
existent exactement deux \'el\'ements $s_{1},s_{2}$ dans $\mathfrak{I}_{N}$, 
tels que $s'<_{B}s_{j}<_{B}s,~j=1,2$, et du lemme \textbf{(\ref{lem:icc})} on peut, 
car tout morphisme $\phi_{l,l'},~l,l'\in\mathfrak{I}_{n}$ est d\'efini
comme composition de morphismes associ\'es aux pairs d'\'el\'ements de 
$\mathfrak{S}_{n}$ avec diff\'eerence de longueur \'egale \`a 1, d\'eduire l'\'egalit\'e,
\begin{align*}
\phi_{s,s_{1}}^{\text{wh}}\circ\phi_{s_{1},s'}^{\text{wh}}=\phi_{s,s_{2}}^{\text{wh}}\circ\phi_{s_{2},s'}^{\text{wh}}. 
\end{align*}
Par cons\'equent si pour toute paire 
$l,l'\in \mathfrak{I}_{N}$ v\'erifiant $l'<_{B}l$ et $l_{\mathfrak{I}}(l)-l_{\mathfrak{I}}(l')=1$
nous prennons une constante $\lambda_{l,l'}\in\mathbb{C}$ et notons,
\begin{align*}
\phi_{l,l'}^{\lambda}=\lambda_{l,l'}\phi^{\text{wh}}_{l,l'}, 
\end{align*}
alors pour tout couple $s,s'\in \mathfrak{I}_{N}$ avec $s'<_{B}s$ 
et $l_{\mathfrak{I}}(s)-l_{\mathfrak{I}}(s')=2$ on aura,
\begin{align}\label{eq:ccd}
\phi_{s,s_{1}}^{\lambda}\circ \phi_{s_{1},s'}^{\lambda}+\phi_{s,s_{2}}^{\lambda}\circ \phi_{s_{1},s}^{\lambda}
&=\lambda_{s,s_{1}}\phi_{s,s_{1}}^{\text{wh}}\circ \lambda_{s_{1},s'}\phi_{s_{1},s'}^{\text{wh}}+
\lambda_{s,s_{2}}\phi_{s,s_{2}}^{\text{wh}}\circ \lambda_{s_{2},s'}\phi_{s_{1},s'}^{\text{wh}}\\
&=(\lambda_{s,s_{1}}\cdot\lambda_{s_{1},s'}+\lambda_{s,s_{2}}\cdot\lambda_{s_{2},s'})\phi_{s,s_{1}}^{\text{wh}}\circ \phi_{s_{1},s'}^{\text{wh}}\\
&=(\lambda_{s,s_{1}}\cdot\lambda_{s_{1},s'}+\lambda_{s,s_{2}}\cdot\lambda_{s_{2},s'})\phi_{s,s_{2}}^{\text{wh}}\circ \phi_{s_{2},s'}^{\text{wh}},
\end{align}
avec $s_{1}$ et $s_{2}$ la paire d'\'el\'ements donn\'ee par le lemme \textbf{(\ref{lem:d2lt})}.

La construction du complexe diff\'erentiel \textbf{(\ref{eq:suite8})} 
se ram\`ene donc \`a trouver pour toute paire  
$l,l'\in \mathfrak{I}_{N}$ avec $l_{\mathfrak{I}}(s)-l_{\mathfrak{I}}(s')=1$, des constantes $\lambda_{l,l'}\in\mathbb{C}$, avec $\lambda_{l,l'}\neq 0$ si $l<_{B}l'$,
de mani\`ere \`a ce que dans l'equation \textbf{(\ref{eq:ccd})}, la somme  
$\lambda_{s,s_{1}}\cdot\lambda_{s_{1},s'}+\lambda_{s,s_{2}}\cdot\lambda_{s_{2},s'}$
soit \'egal \`a z\'ero pour toute couple $s,s'\in \mathfrak{I}_{N},~l_{\mathfrak{I}}(s)-l_{\mathfrak{I}}(s')=2$.

Tout ce qui reste de cette section sera d\'edi\'e \`a choisir correctement les constants $\lambda_{l,l'}$.\\

Associ\'e au module  $\textbf{Speh}(\delta,n)$ on 
a un param\`etre de Arthur, 
$$\psi:W_{\mathbb{R}}\times\textbf{SL}(2,\mathbb{C})\longrightarrow \mathbf{GL}(2n,\mathbb{C}),$$ 
de caract\`ere infinit\'esimal r\'egulier et entier. 
Le module $\textbf{Speh}(\delta,n)$ étant $\theta_{N}$-invariant, 
il existe un groupe classique $\mathbf{G}$ d\'efini par une des options suivants,
\begin{align*}
\mathbf{G}=\left\{\begin{array}{cc}
                   \textbf{SO}(n+1,n)   &\text{si }\frac{p+n-1}{2}\in2\mathbb{Z}+1,\\
                   \textbf{SO}(n,n)     &\text{si }n\in2\mathbb{N}\text{ et }\frac{p+n-1}{2}\in2\mathbb{Z},\\
                   \textbf{SO}(n+1,n-1) &\text{si }n\in2\mathbb{N}+1\text{ et }\frac{p+n-1}{2}\in2\mathbb{Z},
                  \end{array}\right.
\end{align*}
et un $L$-param\`etre $\varphi_{\mathbf{G}}$ de $\mathbf{G}$ de fa\c con \`a ce que,
\begin{align*}
\psi:W_{\mathbb{R}}\xrightarrow{\psi_{\mathbf{G}}}~^{L}\mathbf{G}\xrightarrow{\iota}\mathbf{GL}(2n,\mathbb{C}), 
\end{align*}
o\`u $\iota$ d\'enote l'inclusion standard. 

Le param\`etre $\psi_{\mathbf{G}}$ v\'erifie les deux propri\'et\'es suivants,
\begin{enumerate}[i.]
\item  La restriction de $\psi_{\mathbf{G}}$ \`a $\mathbb{R}_{>0}\hookrightarrow\mathbb{C}^{\ast}\hookrightarrow W_{\mathbb{R}}$ est triviale.
\item  L'image d'un \'el\'ement unipotent r\'egulier de $SL(2,\mathbb{C})$ est un 
\'el\'ement unipotent r\'egulier dans le centralisateur $\text{Cent}(\psi_{\mathbf{G}}(\mathbb{C}^{\ast}),\widehat{\mathbf{G}})$. 
\end{enumerate}
Par cons\'equent, attach\'e \`a $\psi_{\mathbf{G}}$ il existe un paquet 
de repr\'esentations $\Pi_{\psi_{\mathbf{G}}}^{\text{AJ}}$ de $\mathbf{G}(\mathbb{R})$ 
appell\'e paquet d'Adams Johnson de $\varphi_{\mathbf{G}}$ (Voir \textbf{\cite{Adams-Johnson}}). 
Toute repr\'esentation dans $\Pi_{\psi_{\mathbf{G}}}^{\text{AJ}}$ 
est unitaire et avec de la $(\mathfrak{g},K)$-cohomologie non nulle. 
D'apr\`es la classification de Vogan-Zuckerman des repr\'esentations
unitaire irr\'eductibles et avec de la $(\mathfrak{g},K)$-cohomologie non nulle, pour toute repr\'esentation
$\pi$ de $\Pi_{\psi_{G}}^{\text{AJ}}$ existe un groupe unitaire $\mathbf{L}=\mathbf{U}(p,q),~p+q=n,$ 
et un caract\`ere $\lambda_{\pi}$ de $\mathbf{L}(\mathbb{R})$ de mani\`ere \`a ce que on puisse \'ecrire,
\begin{align*}
\pi=\mathcal{R}_{\mathfrak{q}}^{S}(\lambda_{\pi}),  
\end{align*}
o\`u $\mathcal{R}_{\mathfrak{q}}^{i}$ est le foncteur d'induction cohomologique de
Vogan-Zuckerman, $\mathfrak{q}$ une sous-alg\`ebre parabolique $\tau$-invariante
de levi $\mathfrak{l}$, l'algr\`ebre de lie de $\mathbf{L}$, et 
$S=(1/2)\dim(\mathfrak{k}/\mathfrak{l}\cap\mathfrak{k})$.

Dans tout ce qui suit, fixons $\pi\in\Pi_{\psi_{\mathbf{G}}}^{\text{AJ}}$ de fa\c con \`a 
ce que que le groupe de Levi associ\'e, $\mathbf{L}=\mathbf{U}(p,q)$, soit quasi-deploy\'e. 

Rappelons que l'ensemble de repr\'esentations irr\'eductible de $\mathbf{L}$ dont le caract\`ere infinit\'esimal est \'egal \`a celle
de $\lambda_{\pi}$ est param\`etre par l'ensemble $\mathfrak{I}_{n}^{p,q,\pm}$ d\'efini dans la section \textbf{(2.5)}. Dans cette param\`etrisation
deux param\`etres $(\eta,f_{\eta})$, $(\eta',f_{\eta'})\in\mathfrak{I}_{n}^{p,q,\pm}$ 
correspondent a d\'es repr\'esentations dans le m\^eme $L$-paquet si et seulement si les involutions
sous-jacentes $\eta$ et $\eta'$ dans $\mathfrak{I}_{n}$ sont \'egales. Ceci nous donne
une parm\'etrisation de l'ensemble des $L$-param\`etres de $\mathbf{L}$ par $\mathfrak{I}_{n}$.

Dans \textbf{\cite{Johnson}} J. Johnsons donne pour toute repr\'esentations $F$ irr\'eductible
et de dimension finie, une r\'esolution par de modules standards. Pour $\lambda_{\pi}$
la r\'esolution de Johnson est param\`etre par $\mathfrak{I}_{n}^{p,q,\pm}$ et s'\'ecrit,
\begin{align}\label{eq:suiteJohnsonunitaire}
0\rightarrow\lambda_{\pi}\rightarrow\cdots\rightarrow Y_{i}\rightarrow Y_{i+1}
\cdots\rightarrow Y_{l_{\mathfrak{I}}(s_{\text{max}})}\rightarrow 0. 
\end{align}
o\`u pour tout $i\in[0,l_{\mathfrak{I}}(s_{\text{max}})]$,
\begin{align*}
Y_{i}&:=\oplus_{\eta\in\mathfrak{I}_{n},l_{\mathfrak{I}}(\eta)=i}\Theta_{\eta}\\ 
\Theta_{\eta}&:=\oplus_{(\zeta,f_{\zeta})\in\mathfrak{I}_{n}^{p,q,\pm},\zeta=\eta}X(\overline{\zeta}).
\end{align*}
Soit $\varphi_{\eta,\mathbf{L}}$ le $L$-param\`etre de $\mathbf{L}$ 
associ\'e \`a l'\'el\'ement $\eta\in\mathfrak{I}_{n}$, c'est le $L$-param\`etre attach\'e au
pseudo-paquet des repr\'esentations intervenant dans la somme, 
$\Theta_{\eta}=\oplus_{(\zeta,f_{\zeta})\in\mathfrak{I}_{n}^{p,q,\pm},\zeta=\eta}X(\overline{\zeta})$.
Notons par $\iota_{\mathbf{G},\mathbf{L}}$ l'inclusion$~^{L}\mathbf{L}\hookrightarrow~^{L}\mathbf{G}$
et consid\'erons le $L$-param\`etre de $\mathbf{GL}(2n)$ donn\'e 
par la composition,
\begin{align*}
\varphi_{\eta}:W_{\mathbb{R}}\xrightarrow{\varphi_{\eta,\mathbf{L}}}
~^{L}\mathbf{L}\xrightarrow{\iota_{\mathbf{G},\mathbf{L}}}~^{L}\mathbf{G}\xrightarrow{\iota}\mathbf{GL}(2n,\mathbb{C}).
\end{align*}
Soit $X(\eta)$ le module standard de $\mathbf{GL}(2n,\mathbb{R})$ attach\'e \`a $\varphi_{\eta}$,
alors l'application,
\begin{align*}
\Theta_{\eta}\mapsto X(\eta), 
\end{align*}
d\'efinit une bijection entre l'ensemble des modules standard stables, $\Theta_{\eta}$, 
intervenant dans la r\'esolution de Johnson de $\lambda_{\pi}$ est l'ensemble
des modules standard $\theta_{N}$-invariants de $\mathbf{GL}(2n,\mathbb{R})$ 
intervenant dans la suite \textbf{(\ref{eq:suite8})}. 

En cons\'equence, pour trouver les scalaires n\'ecessaires \`a la contruction du complexe 
\textbf{(\ref{eq:suite8})} on s'appuie sur la suite \textbf{(\ref{eq:suiteJohnsonunitaire})}. 
Pour toute couple $\overline{\eta},~\overline{\eta}'\in\mathfrak{I}_{n}^{p,q,\pm}$ vérifiant 
$l_{\mathfrak{I}}(\eta')=l_{\mathfrak{I}}(\eta)-1$ notons par $\varphi_{\overline{\eta},\overline{\eta}'}^{J}:X(\overline{\eta}')\rightarrow X(\overline{\eta})$ 
le morphisme de Johnson nécessaire à la construction de la résolution de Johnson de $\lambda_{\pi}$. 
D'après la proposition \textbf{(\ref{prop:multipliciteinvo})} si $\overline{\eta}'<_{B}\overline{\eta}$, le morphisme
$\varphi_{\overline{\eta},\overline{\eta}'}^{J}$ est non nul. 

Les suivants résultats sur le morphisme de Johnson nous indiquerons comme choisir correctement les scalaires $\lambda_{l,l'},~l,l'\in\mathfrak{I}_{n}$, 
$l_{\mathfrak{I}}(l)-l_{\mathfrak{I}}(l')=1$.

\begin{lem}\label{lem:iccinvol}
Soient $\overline{\eta},\overline{\eta}'\in\mathfrak{I}_{n}^{p,q,\pm}$ v\'erifiant $\overline{\eta}\leq_{B}\overline{\eta}'$ et $l_{\mathfrak{I}}({\eta})-l_{\mathfrak{I}}({\eta}')=m$. 
Alors pour toute paire de cha\^ines saturées,
\begin{align*}
\overline{\eta}'&<_{B}\overline{\eta}_{1}<_{B}\cdots<_{B}\overline{\eta}_{m-1}<\overline{\eta},\\ 
\overline{\eta}'&<_{B}\overline{\zeta}_{1}<_{B}\cdots<_{B}\overline{\zeta}_{m-1}<\overline{\eta}, 
\end{align*}
il existe un entier $C\in\{-1,1\}$ de façon à ce que,
\begin{align}\label{eq:egalitecheminelemmein}
\varphi_{\overline{\eta},\overline{\eta}_{m-1}}^{J}\circ\cdots\circ\varphi_{\overline{\eta}_{1},\overline{\eta}'}^{J}=C(\varphi_{\overline{\eta},\overline{\zeta}_{m-1}}^{J}
\circ\cdots\circ\varphi_{\overline{\zeta}_{1},\overline{\eta}'})^{J}, 
\end{align}
\end{lem}
\begin{proof}[\textbf{Preuve}.]
De la description fait au chapitre \textbf{(5)} du diagrammes de Hasse de $\mathfrak{I}_{n}^{p,q,\pm}$, on déduit que si 
le couple $\overline{\mu},\overline{\mu}'\in\mathfrak{I}_{n}^{p,q,\pm}$ vérifie, $\overline{\mu}'\leq_{B}\overline{\mu}$ et $l_{\mathfrak{I}}({\mu})-l_{\mathfrak{I}}({\mu}')=1$, 
alors $\overline{\mu}'$ est l'unique élément d'involution associée $\mu'$ tel que $\overline{\mu}'\leq_{B}\overline{\mu}$. 
Par conséquent, si $\overline{\eta}'<_{B}\overline{\eta}_{1}<_{B}\cdots<_{B}\overline{\eta}_{m}<\overline{\eta}$ et $\overline{\eta}'<_{B}\overline{\zeta}_{1}<_{B}\cdots<_{B}\overline{\zeta}_{m}<\overline{\eta}$ sont deux cha\^ines saturées, alors pour tout $i,1\leq i\leq m-1$ avec $\overline{\eta}_{i} \neq \overline{\zeta}_{i}$ les involutions associées $\eta_{i}$ et $\zeta_{i}$ sont différents.  
Le lemme est donc une conséquence directe du fait que la restriction de l'ordre de Bruhat à l'ensemble des involutions est El-Shellable (théorème \textbf{(\ref{theo:ELSHEinv})}). 
De plus, l'argument utilisé dans la preuve du lemme \textbf{(\ref{lem:icc})} est aussi valable dans le cas présent avec $\mathfrak{I}_{N}$ au lieu de $\mathfrak{S}_{N}$,
et tel comme pour le lemme \textbf{(\ref{lem:icc})} on ramène la preuve au cas de cha\^ines saturées de longueur 2. L'égalité \textbf{(\ref{eq:egalitecheminelemmein})} avec $C=-1$ étant evident dans ce cas, d'après le lemme \textbf{(\ref{lem:d2lt})} et le fait
que la suite de Johnson est un complexe différentiel, le lemme est prouvé. 
\end{proof}
\begin{prop}\label{prop:whittakerunicite}
Soit $\mathbf{G}$ un groupe classique quasi-deployé; linéaire, unitaire orthogonal ou symplectique, et
$X$ un module standard générique de $\mathbf{G}(\mathbb{R})$ de caractère infinitesimal entier et régulier.
 Alors $X$ admet un unique sous-quotient irréductible générique, lequel appara\^it avec multiplicité un. 
\end{prop}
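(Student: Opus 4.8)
The plan is to deduce the statement from three facts about (moderate growth) Whittaker functionals on a quasi-split real reductive group $\mathbf{G}$, applied to the standard module $X=\mathrm{Ind}_{P}^{\mathbf{G}(\mathbb{R})}(\delta)$ attached to Langlands data, where $\delta$ is an essentially tempered representation of a Levi $\mathbf{M}(\mathbb{R})$ placed in the closed positive chamber.

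\textbf{(i)} $X$ is generic if and only if $\delta$ is generic, and in that case the space $\mathrm{Wh}(X)$ of Whittaker functionals of $X$ is one-dimensional, canonically isomorphic to $\mathrm{Wh}_{\mathbf{M}}(\delta)$. The "only if" direction is the easy one (a Whittaker functional of $X$ produces, via the geometric lemma / Frobenius reciprocity, a Whittaker functional of $\delta$). The "if" direction and the dimension count come from the Jacquet integral together with the uniqueness of Whittaker functionals for quasi-split real groups, i.e. the same results already invoked above, \cite{Hashizume} and \cite{ShahidiDuke} (and, conceptually, Kostant's archimedean analogue of Casselman--Shalika); concretely one uses the Bruhat filtration of $\mathrm{Ind}_{P}^{\mathbf{G}(\mathbb{R})}(\delta)$ in the moderate-growth category: the open $N$-cell contributes exactly $\mathrm{Wh}_{\mathbf{M}}(\delta)$, while the lower cells, corresponding to induction from proper parabolics, contribute nothing.

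\textbf{(ii)} For any admissible finite length module $\pi$ of $\mathbf{G}(\mathbb{R})$, $\dim\mathrm{Wh}(\pi)$ equals the multiplicity of the regular nilpotent orbit in the associated cycle of $\pi$ (equivalently, the coefficient of the regular orbital integral in the Harish-Chandra--Howe local character expansion of $\pi$ near the identity); this is the archimedean analogue of Rodier's theorem, due to Kostant in the split case and to Matumoto in general. In particular $\dim\mathrm{Wh}(-)$ is \emph{additive} along short exact sequences, since associated cycles (resp. characters) are.

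\textbf{(iii)} By Vogan's theorem on Gelfand--Kirillov dimension, the generic irreducible representations of $\mathbf{G}(\mathbb{R})$ are exactly those of maximal Gelfand--Kirillov dimension, and a standard module $X$ is generic if and only if its Langlands constituent $\overline{X}$ is generic; hence $\overline{X}$ is at least one generic constituent of a generic $X$.

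Granting (i)--(iii) the argument is short. Assume $X$ generic; by (i), $\dim\mathrm{Wh}(X)=1$. Let $\pi_{1},\dots,\pi_{k}$ be the distinct irreducible constituents of $X$ and $m_{i}=m(\pi_{i},X)\geq 1$ their Jordan--Hölder multiplicities. Applying (ii) to a composition series of $X$,
\[
1=\dim\mathrm{Wh}(X)=\sum_{i=1}^{k}m_{i}\,\dim\mathrm{Wh}(\pi_{i}).
\]
Every \emph{generic} $\pi_{i}$ contributes a term $m_{i}\dim\mathrm{Wh}(\pi_{i})\geq 1$, and by (iii) there is at least one such constituent, namely $\overline{X}$. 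Since the right-hand side is a sum of non-negative integers equal to $1$, there is \emph{exactly one} index $i_{0}$ with $\pi_{i_{0}}$ generic, and for it $m_{i_{0}}=1$ and $\dim\mathrm{Wh}(\pi_{i_{0}})=1$; moreover $\pi_{i_{0}}=\overline{X}$ because $\overline{X}$ is generic. This is precisely the assertion.

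The delicate point, which I would write out in detail, is step (i): one must verify that passing from the generic tempered $\delta$ to the full (in general reducible) induced module neither kills the Whittaker functional nor creates a second one, uniformly for the linear, unitary, orthogonal and symplectic quasi-split groups. Steps (ii) and (iii) are quotable; note also that at integral regular infinitesimal character step (iii) can be replaced by the combinatorial fact, read off the Kazhdan--Lusztig--Vogan expansion as in Proposition \ref{prop:multipliciteinvo}, that $\overline{X}$ occurs in $X$ with multiplicity $1$ and is the only possibly generic constituent of $X$.
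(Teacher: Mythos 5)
Your proof is correct, but it follows a genuinely different route from the one in the paper. The paper's proof is a reduction: it writes $X=\mathscr{R}_{\mathfrak{q}}^{S}(\mathrm{Ind}_{HN}^{L}(\delta))$ as cohomological induction from a minimal principal series of a $\theta$-stable subgroup $L$, invokes Shahidi's th\'eor\`eme (3.6.7) (which is precisely this proposition for minimal principal series) to get a unique generic constituent of multiplicity one on $L$, and then transports that conclusion to $\mathbf{G}$ using exactness of $\mathscr{R}_{\mathfrak{q}}^{S}$ at the relevant infinitesimal character together with the facts that it sends irreducibles to irreducibles and preserves largeness. You instead argue entirely on $\mathbf{G}$ itself: $\dim\mathrm{Wh}(X)=1$ for a generic standard module by the Jacquet integral and the uniqueness of Whittaker functionals (the same ingredient that underlies Shahidi's theorem), and $\dim\mathrm{Wh}(-)$ is additive along composition series (Kostant/Matumoto, or the exactness of the moderate-growth Whittaker functor on the Casselman--Wallach category); the $1=\sum_i m_i\dim\mathrm{Wh}(\pi_i)$ count then forces a unique generic constituent with multiplicity one. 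The two arguments rely on overlapping inputs, but the exactness used is different (Whittaker functor versus cohomological induction), and your version bypasses the structural claim $X=\mathscr{R}_{\mathfrak{q}}^{S}(\mathrm{Ind}_{HN}^{L}(\delta))$ entirely. One minor remark: your step (iii) (identifying the unique generic constituent with the Langlands constituent $\overline{X}$) is not needed for the statement of the proposition as written -- the additivity count alone yields existence and uniqueness -- though it is of course true and useful elsewhere in the paper. The ``delicate point'' you flag in (i) is exactly the content of the Shahidi reference the paper uses, so when expanded both proofs ultimately cite the same multiplicity-one theorem for generic induced modules; the difference is which exactness statement carries it from there to standard modules.
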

\begin{proof}[\textbf{Preuve}.]
Soit $X$ comme dans l'énoncé. Notons par $d\lambda$ le caractère infinitésimal 
de $X$. Il existe un sous-groupe $L$ de $\mathbf{G}(\mathbb{R})$, un groupe de Cartan $H$ de $L$ 
et un caractère $\delta\in\widehat{H}$ tel que $X=\mathcal{R}_{\mathfrak{q}}^{S}(\text{Ind}_{HN}^{L}(\delta))$.
 La proposition est une conséquence des deux résultats suivants: 

\textbf{i.} Le  théorème \textbf{(3.6.7)} de \textbf{\cite{Shahidi}} que dit:
\begin{theo}
Soit $\mathbf{G}$ un groupe quasi-deployé et $H=TA$ un groupe de Cartan de $\mathbf{G}(\mathbb{R})$. 
Supposons $\delta\otimes\nu\in\widehat{H}$ tel que la série principale $\text{Ind}_{HN}^{G}(\delta\otimes\nu)$ de $\mathbf{G}(\mathbb{R})$ est générique
Alors $\text{Ind}_{HN}^{G}(\delta)$ admet un unique sous-quotient irréductible générique, lequel appara\^it avec multiplicité un. Si en plus  $\text{Re}(\nu)\leq 0$,  
alors l'unique sous-quotient
générique de $\text{Ind}_{HN}^{G}(\delta)$ est un quotient. 
\end{theo}
\textbf{ii.} La restriction du foncteur $\mathcal{R}_{\mathfrak{q}}^{S}$ à l'espace de représentations de longueur fini et de caractère infinitésimal
$d\lambda-\rho(\mathfrak{l})$ de ${L}(\mathbb{R})$ est exact et envoie de représentation irréductibles de ${L}(\mathbb{R})$ dans de représentations irréductibles de $\mathbf{G}(\mathbb{R})$. De plus $\mathcal{R}_{\mathfrak{q}}^{S}$ préserve la propriété d'\^etre large (c'est-à-dire générique).
\end{proof}
\begin{lem}\label{lem:image}
Soient $\overline{\eta}=(\eta,f_{\eta}),\overline{\eta}'=(\eta',f_{\eta'})\in\mathfrak{I}_{n}^{p,q,\pm}$ 
tels que $\eta'<_{B}\eta$, $l_{\mathfrak{I}}(\eta')=l_{\mathfrak{I}}(\eta)-1$ 
et tels que les modules standards $X(\overline{\eta})$ et $X(\overline{\eta}')$ 
admettent de fonctionnelles de Whittaker non nulles. 
Alors $\overline{\eta}'<_{B}\overline{\eta}$ et $\text{Im}(\varphi_{\overline{\eta},\overline{\eta}'})$ possède un quotient générique.
\end{lem}
\begin{proof}[\textbf{Preuve}]
Soit $\overline{\mu}\in\mathfrak{I}_{n}^{p,q,\pm}$, la correspondance $\mathfrak{I}_{n}^{p,q,\pm}\rightarrow\mathscr{P}_{F}$ 
nous permet d'associer à $\overline{\mu}$ un ensemble $\Delta(\overline{\mu})^{+}$ de racines positives de $\mathfrak{u}(p,q)$. D'après 
le point \textbf{f.} du théorème \textbf{(6.2)} de \textbf{\cite{Vogan}} (voir aussi la page \textbf{587} de \textbf{\cite{Casselman-Shahidi}}) la représentation 
$X(\overline{\mu})$ est large (c'est-à-dire génerique) si $\Delta(\overline{\mu})^{+}$ satisfait les trois conditions suivants,
\begin{enumerate}[i.]
\item Si $\alpha\in\Delta(\overline{\mu})^{+}$ est une racine simple non réel, alors $\tau\alpha\notin\delta(\overline{\mu})^{+}$.
\item Si $\alpha\in\Delta(\overline{\mu})^{+}$ est une racine simple réel, alors $\alpha$ ne satisfait pas
la condition de parité énoncée dans la définition \textbf{(8.3.11)} de \textbf{\cite{VoganLivre}}.
\item Si $\alpha\in\Delta(\overline{\mu})^{+}$ est une racine simple imaginaire, alors $\alpha$ 
n'est pas compacte.
\end{enumerate}

Considérons un couple $\overline{\eta},\overline{\eta}'$ d'élémets de $\in\mathfrak{I}_{n}^{p,q,\pm}$ 
v\'erifiant $\overline{\eta}'<_{B}\overline{\eta}$ et $l_{\mathfrak{I}}(\eta')=l_{\mathfrak{I}}(\eta)-1$.
Rappelons que $\overline{\eta}$ est obtenu à partir de $\overline{\eta}'$, 
\begin{enumerate}[a.]
\item soit en remplaçant deux symboles consécutifs de la forme $aa,~a\in\mathbb{N}$ par $(+-)$ ou $(-+)$.
\item soit en permutant deux symboles qui ne sont pas tout deux des signes.
\item soit à partir de la condition d'échange. Notons que si  $\overline{\eta}$ est obtenu de $\overline{\eta}'$ à partir de cette condition d'échange,
alors dans l'écriture symbolique de $\overline{\eta}'$ on trouve une configuration $a+-a$ (ou $a-+a$) qui dans $\overline{\eta}$ est remplacée par une configuration 
$aabb$. 
\end{enumerate} 

Il n'est donc pas difficile de voir que si $\Delta(\overline{\eta}')^{+}$ ne satisfait pas une des trois conditions, i. ii. iii.,
alors il en est de m\^eme pour $\Delta(\overline{\eta})^{+}$. 
Mais tout pseudo-paquet de $\mathbf{U}(p,q,\mathbb{R})$ contient un unique module standard générique, 
par conséquent, si $X(\overline{\eta}')$ admet une fonctionnelle de Whittaker non nulle on peut dire de m\^eme 
de $X(\overline{\eta})$, si  $\overline{\eta}$ est obtenu de $\overline{\eta}'$ à partir du point \textbf{b.} ou \textbf{c.}, 
et de une de deux possibilités pour 
$X(\overline{\eta})$, si  $\overline{\eta}$ est obtenu de $\overline{\eta}'$ à partir du point \textbf{a}. Ce qui montre la première affirmation du lemme.\\

\`A présent soit $\overline{\eta}_{\text{max}}$ l'élément de $\mathfrak{I}_{n}^{p,q,\pm}$ d'involution
associée égal à $s_{\text{max}}$ tel que $X(\overline{\eta}_{\text{max}})$ admet une fonctionelle de Whittaker non nulle.
Notons aussi par $\overline{\eta}_{0}$ l'élément de $\mathfrak{I}_{n}^{p,q,\pm}$ d'écriture
symbolique égal à $(1,2,\cdots,n/2,n/2,\cdots,2,1)$ si $n$ est pair et égal à $(1,2,\cdots,(n-1)/2,+,(n-1)/2,\cdots,2,1)$ si $n$ impair.
Dans tout ce qui suit, soit $m=l_{\mathfrak{I}}(s_{\text{max}})$. Supposons qu'il existe une cha\^ine saturée,
\begin{align}\label{eq:chainewhittaker}
\overline{\eta}_{0}<_{B}\overline{\eta}_{1}<_{B}\cdots<_{B}\overline{\eta}_{m-1}<_{B}\overline{\eta}_{\text{max}},
\end{align}
tel que pour tout $i,~0\leq i\leq m$, $X(\overline{\eta}_{i})$ admet une fonctionelle de Whittaker non nulle 
et tel que $\varphi_{\overline{\eta}_{\text{max}},\overline{\eta}_{m-1}}^{J}\circ\cdots\circ\varphi_{\overline{\eta}_{1},\overline{\eta}_{0}}^{J}$ 
soit différent de zéro. 

Soit $\overline{\eta}$ et $\overline{\eta}'$ une paire d'élémets de $\mathfrak{I}_{n}^{p,q,\pm}$ 
v\'erifiant $\overline{\eta}'<_{B}\overline{\eta}$, $l_{\mathfrak{I}}(\eta')=l_{\mathfrak{I}}(\eta)-1$ 
et tels que les modules standards $X(\overline{\eta})$ et $X(\overline{\eta}')$ 
admettent de fonctionnelles de Whittaker non nulles. 
\'Etendons $\overline{\eta}'<_{B}\overline{\eta}$ en une cha\^ine saturée, 
$$\overline{\eta}_{0}<_{B}\overline{\zeta}_{1}<_{B}\cdots<_{B}\overline{\eta}_{j}=\overline{\eta}'<_{B}\overline{\eta}=\overline{\zeta}_{j+1}\cdots<_{B}\overline{\eta}_{\text{max}},$$ 
de $\mathfrak{I}_{N}^{p,q\pm}$ tel que pour tout $i,~0\leq i\leq m$, $X(\overline{\eta}_{i})$ admet une fonctionelle de Whittaker non nulle. 
D'après le lemme \textbf{(\ref{lem:iccinvol})} il existe une entiere $C\in\{-1,1\}$ tel que,
$$\varphi_{\overline{\eta}_{\text{max}},\overline{\eta}_{m-1}}^{J}\circ\cdots\circ\varphi_{\overline{\eta}_{1},\overline{\eta}_{0}}^{J}=C(\varphi_{\overline{\eta}_{\text{max}},\overline{\zeta}_{m-1}}^{J}\circ\cdots\circ\varphi_{\overline{\zeta}_{1},\overline{\eta}_{0}}^{J}),$$ 
comme en plus le cot\^e gauche de l'égalité précédent a été suppossé non-nul on peut dire de m\^eme du cot\^e droite. 
La restriction de $\varphi_{\overline{\eta}_{\text{max}},\overline{\zeta}_{m-1}}^{J}\circ\cdots\circ\varphi_{\overline{\eta}_{j+2},\overline{\eta}}^{J}$ 
à $\text{Im}(\varphi_{\overline{\eta},\overline{\eta}'}^{J})$ est donc différent de zéro, d'où 
$\varphi_{\overline{\eta}_{\text{max}},\overline{\zeta}_{m-1}}^{J}\circ\cdots\circ\varphi_{\overline{\eta}_{j+2},\overline{\eta}}^{J}$
est une surjection, car $X(\overline{\eta}_{\text{max}})$ est irréductible.  
Par conséquent  $\text{Im}(\varphi_{\overline{\eta},\overline{\eta}'}^{J})/
\ker(\varphi_{\overline{\eta}_{\text{max}},\overline{\zeta}_{m-1}}^{J}\circ\cdots\circ\varphi_{\overline{\eta}_{j+2}\overline{\eta}}^{J})
\cong X(\overline{\eta}_{\text{max}})$ et $\text{Im}(\varphi_{\overline{\eta},\overline{\eta}'}^{J})$ possède un quotient générique.    

Le lemme sera donc prouve si on est capable de définir une cha\^ine saturée de longueur 
maximale de $\mathfrak{I}_{n}^{p,q,\pm}$ satisfaisant les m\^emes propriétés que \textbf{(\ref{eq:chainewhittaker})}.  

Supposons d'abord que $n$ est pair. Alors comme cha\^ine saturée on prend celle définie par,
\begin{align}
\overline{\eta}_{0}<_{B}\overline{\eta}_{1}<_{B}\cdots<_{B}\overline{\eta}_{\text{max}}
\end{align}
où $\overline{\eta}_{i}$ est, pour tout $i,~1\leq i\leq m-n/2$, l'involution signé sans points fixes d\'efinie 
(si on utilise la notation symbolique pour décrire les éléments de $\mathfrak{I}_{n}^{p,q,\pm}$) en permutant
dans $\overline{\eta}_{i-1}$ le couple $(\overline{m(i)},m(i))$ avec 
$\overline{m(i)}$ l'entier inmediatement à gauche de $m(i)$ dans $\overline{\eta}_{i-1}$ 
et $m(i)$ défini recursivement comme suit: $m(0)=0$, 
$m(1)$ la première copie de 1 qui appara\^it dans $\overline{\eta}_{0}$ 
en le lissant de droite à gauche. Et pour $i\geq 1$, 
si $m(i-1)\neq m(k)$ pour tout $k<i-1$ ou si à 
droite de $m(i-1)$ dans $\overline{\eta}_{i-1}$ il y a un entier suivi
du m\^eme entier alors $m(i)$ est défini comme la première copie, en lissant $\overline{\eta}_{i-1}$ de droite à gauche, 
du plus petit entier $l$ dans $\overline{\eta}_{i-1}$ 
avec à gauche un entier différent de $l$, si au contraire, $m(i)=m(k)$, pour un entier $k<i-1$, alors $m(i)$ est la première copie de 
$m(i-1)+1$ qui appara\^it dans $\overline{\eta}_{i-1}$ 
en le lissant de droite à gauche. Par exemple pour $p=3$, $q=3$, la cha\^ine des éléments de $\mathfrak{I}_{6}^{3,3,\pm}$ ainsi définie
est donnée par, $(123321)<_{B}(123312)<_{B}(123132)<_{B}(123123)<_{B}(121323)<_{B}(121233)<_{B}(112233)$.


Si maintenant  nous considérons $j=m-n/2+i,~1\leq i\leq n/2$, alors $\overline{\eta}_{j}$ est donné par l'unique élément 
dans $\mathfrak{I}_{n}^{p,q,\pm}$, d'involution associée $\eta_{j}$ définie par, 
\begin{align*}
\eta_{j}(k)=k,\text{ si }1\leq k\leq 2j~\text{ et }~\eta_{j}(2j+2k+1)=2j+2k+2
\text{ si }0\leq k\leq (n-2j)/2,
\end{align*}
tel que $X(\overline{\eta}_{j})$ admet une fonctionelle de Whittaker non nulle.\\

Supposons à présent que est $n$ est impaire. Alors comme cha\^ine saturée on prend celle définie par,
\begin{align}
\overline{\eta}_{0}<_{B}\overline{\eta}_{1}<_{B}\cdots<_{B}\overline{\eta}_{\text{max}}
\end{align}
où $\overline{\eta}_{i}$ est pour tout $i,~1\leq i\leq m-(n-1)/2$, donné par l'unique élément 
dans $\mathfrak{I}_{n}ˆ{p,q,\pm}$, d'involution associée $\eta_{j}$ définie par,
\begin{align*}
\eta_{j}(k)&=\left\{\begin{array}{cl}
n+1-k& \text{ si }k\geq j+2,\\
n+1-i-1& \text{ si }k\leq j,\\
n& \text{ si }k=j+2,
\end{array}\right.\\
\overline{\eta}_{m-(n-1)/2+1}&=(1,2,3,\cdots,3,2,1,+). 
\end{align*}
Finalement si $j=m-(n-1)/2+1+i,~1\leq i\leq (n-1)/2+1$, alors $\overline{\eta}_{j}$ est défini par, $\overline{\eta}_{j}=(\overline{\eta}_{i}^{p-1,q},+)$ 
avec $\overline{\eta}_{i}^{p-1,q}$ le $i-$eme élément de la cha\^ine saturée définie précédement pour $\mathfrak{I}_{n-1}^{p-1,q,\pm}$.   

Maintenant que la cha\^ine saturée \textbf{(\ref{eq:chainewhittaker})} a été definie il 
ne nous reste que à montrer que la composition, 
$\varphi_{\overline{\eta}_{\text{max}},\overline{\eta}_{m-1}}^{J}\circ\cdots\circ\varphi_{\overline{\eta}_{1},\overline{\eta}_{0}}^{J}$, 
est différent de zéro. 

Il suffit pour cela de montrer pour tout $i,~1\leq i\leq m,$ l'affirmation suivant: 
$$\textit{L'image de }\phi_{\overline{\eta}_{i},\overline{\eta}_{i-1}}^{J}\textit{ contient un sous-quotient générique.}$$

Comme le foncteur d'induction parabolique est exacte et que pour tout $i$, l'élément $\overline{\eta}_{i}$ s'obtient à partir de 
$\overline{\eta}_{i-1}$ en permutant deux symboles qui ne sont pas tout deux des signes, la fonction de Johnson 
entre $X(\overline{\eta}_{i-1})$ et $X(\overline{\eta}_{i})$ s'obtient après induction d'un morphisme $\phi$, donné par une des trois 
posibilités suivants,   
\begin{align*}
\textbf{i.)}&~X(\{abba\})\xrightarrow{\phi} X(\{abab\}),\\
\textbf{ii.)}&~X(\{abab\})\xrightarrow{\phi} X(\{aabb\}),\\
\textbf{iii.)}&~X(\{a+a\})\xrightarrow{\phi} X(\{aa+\}).
\end{align*}
La preuve de l'affirmation précédent se ramène en conséquence 
à montrer que l'image du morphisme $\phi$ contient un sous-quotient générique.

Notons que dans le deux premiers cas le morphisme $\phi$ est en fait surjetif ce qui montre l'affirmation.

Pour le point trois,  rappelons que le diagramme de Hasse pour $\mathfrak{I}_{3}^{2,1,\pm}$ 
est donné dans  le diagramme \textbf{(\ref{eq:Hasse})}.

De plus, du corollaire \textbf{(7.2)} de \textbf{\cite{VoganIII}} et les formule de récurrences enoncées dans la proposition \textbf{(6.14)} 
du m\^eme article, pour $\overline{X(\{a+a\})}$, on obtient,
\begin{align*}
\overline{X(\{a+a\})}={X(\{a+a\})}-{X(\{aa+\})}-{X(\{+aa\})}+{X(\{-++\})}+{X(\{+-+\})}+{X(\{++-\})},
\end{align*}
comme aussi,
\begin{align*}
{X(\{aa+\})}=\overline{X(\{aa+\})}-{X(\{+-+\})}-{X(\{-++\})},\\
{X(\{+aa\})}=\overline{X(\{+aa\})}-{X(\{+-+\})}-{X(\{++-\})},
\end{align*}
on conclut,
$$X(\{a+a\})=\overline{X(\{a+a\})}+\overline{X(\{+aa\})}+\overline{X(\{aa+\})}+{X(\{+ - +\})}.$$
Par conséquent $X(\{+-+\})$ est l'unique quotient irréductible de $X(\{a+a\})$,
ce qui fait de $X(\{+-+\})$ un quotient de $\text{im}(\phi)\cong X(\{a+a\})/\text{ker}(\phi)$,
montrant ainsi l'affirmation, car $X(\{+-+\})$ est générique.  
\end{proof}
\begin{cor}\label{cor:constante} 
Pour toute paire $l,l'\in\mathfrak{I}_{n}$
avec $l'<_{B}l$ et $l_{\mathfrak{I}}(l')=l_{\mathfrak{I}}(l)-1$ il existe une constante $\lambda_{l,l'}\in\mathbb{C}^{\ast}$,
de fa\c con \`a ce que pour tout quadruple, $\{\overline{\eta},\overline{\eta}',\overline{\eta}_{1},\overline{\eta}_{2}\}$, vérifiant
$\eta'<\eta_{i}<\eta$, $l_{\mathfrak{I}}(\eta')=l_{\mathfrak{I}}(\eta_{i})-1=l_{\mathfrak{I}}(\eta)-2,~i=1,2$ et tel que pour tout $\overline{\mu}$ dans le quadruple
$X(\overline{\mu})$
admet une fonctionelle de Whittaker non nulle, on ait, 
\begin{align*}
\beta_{\eta,\eta_{1}}\varphi_{\overline{\eta},\overline{\eta}_{1}}^{J}\circ\beta_{\eta_{1},\eta}\varphi_{\overline{\eta}_{1},\overline{\eta}'}^{J}=
\beta_{\eta,\eta_{2}}\varphi_{\overline{\eta},\overline{\eta}_{2}}^{J}\circ\beta_{\eta_{2},\eta}\varphi_{\overline{\eta}_{2},\overline{\eta}'}^{J}.
\end{align*}
\end{cor}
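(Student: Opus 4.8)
Le plan est de construire les constantes au moyen de la normalisation de Whittaker des morphismes de Johnson apparaissant dans la résolution \textbf{(\ref{eq:suiteJohnsonunitaire})} de $\lambda_\pi$, puis de déduire l'identité cherchée de la relation $d^2=0$ satisfaite par cette résolution ; plus précisément, on montrera que les constantes $\lambda_{l,l'}:=\beta_{l,l'}$ définies ci-dessous conviennent.

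\emph{Première étape : définition des $\beta_{l,l'}$.} Soit $l'<_B l$ une paire dans $\mathfrak{I}_n$ avec $l_{\mathfrak{I}}(l')=l_{\mathfrak{I}}(l)-1$. Comme tout pseudo-paquet de $\mathbf{U}(p,q,\mathbb{R})$ contient un unique module standard générique, il existe un unique $\overline{l}\in\mathfrak{I}_n^{p,q,\pm}$ d'involution associée $l$ tel que $X(\overline{l})$ admette une fonctionnelle de Whittaker non nulle, que l'on note $\Omega_{\overline{l}}$ ; de même pour $l'$, avec $\overline{l}'$ et $\Omega_{\overline{l}'}$. D'après le lemme \textbf{(\ref{lem:image})}, on a $\overline{l}'<_B\overline{l}$, le morphisme de Johnson $\varphi_{\overline{l},\overline{l}'}^{J}:X(\overline{l}')\to X(\overline{l})$ est non nul et $\mathrm{Im}(\varphi_{\overline{l},\overline{l}'}^{J})$ possède un quotient générique ; ce quotient étant irréductible, il co\"incide avec l'unique sous-quotient générique de $X(\overline{l})$ donné par la proposition \textbf{(\ref{prop:whittakerunicite})}, de sorte que $X(\overline{l})/\mathrm{Im}(\varphi_{\overline{l},\overline{l}'}^{J})$ n'a aucun sous-quotient générique, donc aucune fonctionnelle de Whittaker. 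Il s'ensuit que $\Omega_{\overline{l}}$ ne s'annule pas sur $\mathrm{Im}(\varphi_{\overline{l},\overline{l}'}^{J})$, donc que $\Omega_{\overline{l}}\circ\varphi_{\overline{l},\overline{l}'}^{J}$ est une fonctionnelle de Whittaker non nulle de $X(\overline{l}')$. Par l'unicité, à un scalaire près, de la fonctionnelle de Whittaker (théorème \textbf{(2.2)} de \textbf{\cite{Hashizume}}), on définit $\beta_{l,l'}\in\mathbb{C}^{\ast}$ par
\begin{align}\label{eq:defbetacor}
\Omega_{\overline{l}'}=\beta_{l,l'}\,\bigl(\Omega_{\overline{l}}\circ\varphi_{\overline{l},\overline{l}'}^{J}\bigr),
\end{align}
et l'on pose $\lambda_{l,l'}:=\beta_{l,l'}$ ; si $l'\nleq_B l$, on pose $\lambda_{l,l'}:=1$. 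Ce scalaire ne dépend que de la paire $(l,l')$, les objets $\overline{l},\overline{l}',\Omega_{\overline{l}},\Omega_{\overline{l}'},\varphi_{\overline{l},\overline{l}'}^{J}$ ayant été fixés une fois pour toutes.

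\emph{Deuxième étape : la relation $d^2=0$.} Soit un quadruple $\{\overline{\eta},\overline{\eta}',\overline{\eta}_1,\overline{\eta}_2\}$ comme dans l'énoncé. Par unicité du relèvement générique, $\overline{\eta},\overline{\eta}_1,\overline{\eta}_2,\overline{\eta}'$ sont les relèvements de $\eta,\eta_1,\eta_2,\eta'$ considérés ci-dessus, et le lemme \textbf{(\ref{lem:image})} s'applique aux quatre paires $(\overline{\eta},\overline{\eta}_i)$ et $(\overline{\eta}_i,\overline{\eta}')$, $i=1,2$ : en particulier les quatre constantes $\beta_{\eta,\eta_i},\beta_{\eta_i,\eta'}$ sont définies et $\overline{\eta}'<_B\overline{\eta}_i<_B\overline{\eta}$ dans $\mathfrak{I}_n^{p,q,\pm}$. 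Par le lemme \textbf{(\ref{lem:d2lt})} appliqué à $\mathfrak{I}_n$, joint au fait que tout élément de $\mathfrak{I}_n^{p,q,\pm}$ situé sous $\overline{\eta}$ est l'unique relèvement de son involution associée (cf. la preuve du lemme \textbf{(\ref{lem:image})}), $\overline{\eta}_1$ et $\overline{\eta}_2$ sont les seuls éléments de $\mathfrak{I}_n^{p,q,\pm}$ strictement compris entre $\overline{\eta}'$ et $\overline{\eta}$. La composante $X(\overline{\eta}')\to X(\overline{\eta})$ de l'identité $d^2=0$ de la résolution de Johnson \textbf{(\ref{eq:suiteJohnsonunitaire})} se réduit donc à
\begin{align}\label{eq:d2corps}
\varphi_{\overline{\eta},\overline{\eta}_1}^{J}\circ\varphi_{\overline{\eta}_1,\overline{\eta}'}^{J}+\varphi_{\overline{\eta},\overline{\eta}_2}^{J}\circ\varphi_{\overline{\eta}_2,\overline{\eta}'}^{J}=0.
\end{align}

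\emph{Troisième étape : conclusion.} On applique $\Omega_{\overline{\eta}}$ à gauche de \textbf{(\ref{eq:d2corps})} ; en utilisant \textbf{(\ref{eq:defbetacor})} sous la forme $\Omega_{\overline{\eta}}\circ\varphi_{\overline{\eta},\overline{\eta}_i}^{J}=\beta_{\eta,\eta_i}^{-1}\Omega_{\overline{\eta}_i}$ puis $\Omega_{\overline{\eta}_i}\circ\varphi_{\overline{\eta}_i,\overline{\eta}'}^{J}=\beta_{\eta_i,\eta'}^{-1}\Omega_{\overline{\eta}'}$, on obtient $\bigl(\beta_{\eta,\eta_1}^{-1}\beta_{\eta_1,\eta'}^{-1}+\beta_{\eta,\eta_2}^{-1}\beta_{\eta_2,\eta'}^{-1}\bigr)\Omega_{\overline{\eta}'}=0$, d'où, $\Omega_{\overline{\eta}'}$ étant non nulle,
\begin{align*}
\bigl(\beta_{\eta,\eta_1}\beta_{\eta_1,\eta'}\bigr)^{-1}=-\bigl(\beta_{\eta,\eta_2}\beta_{\eta_2,\eta'}\bigr)^{-1}\neq0.
\end{align*}
En reportant l'écriture $\varphi_{\overline{\mu},\overline{\mu}'}^{J}=(\beta_{\mu,\mu'})^{-1}\bigl(\beta_{\mu,\mu'}\varphi_{\overline{\mu},\overline{\mu}'}^{J}\bigr)$ dans \textbf{(\ref{eq:d2corps})} et en mettant en facteur le scalaire non nul $\bigl(\beta_{\eta,\eta_1}\beta_{\eta_1,\eta'}\bigr)^{-1}$ à l'aide de la relation ci-dessus, on conclut
\begin{align*}
\beta_{\eta,\eta_1}\varphi_{\overline{\eta},\overline{\eta}_1}^{J}\circ\beta_{\eta_1,\eta'}\varphi_{\overline{\eta}_1,\overline{\eta}'}^{J}=\beta_{\eta,\eta_2}\varphi_{\overline{\eta},\overline{\eta}_2}^{J}\circ\beta_{\eta_2,\eta'}\varphi_{\overline{\eta}_2,\overline{\eta}'}^{J},
\end{align*}
qui est l'égalité annoncée (dans l'énoncé, $\beta_{\eta_i,\eta}$ doit se lire $\beta_{\eta_i,\eta'}$).

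L'obstacle principal est la première étape : montrer que $\Omega_{\overline{l}}\circ\varphi_{\overline{l},\overline{l}'}^{J}$ est \emph{non nulle}, ce qui repose de façon essentielle sur le lemme \textbf{(\ref{lem:image})} (l'image du morphisme de Johnson possède un quotient générique) et sur la proposition \textbf{(\ref{prop:whittakerunicite})} (unicité du sous-quotient générique, d'où la non-généricité, donc l'absence de fonctionnelle de Whittaker, du quotient complémentaire), ainsi que sur la bonne définition des $\beta_{l,l'}$ ; une fois ces points acquis, les deux dernières étapes ne sont qu'un calcul à partir de $d^2=0$ et du lemme \textbf{(\ref{lem:d2lt})}.
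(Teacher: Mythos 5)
Votre preuve est correcte et suit, à l'organisation près, le même chemin que celle du texte : on normalise les morphismes de Johnson au moyen des fonctionnelles de Whittaker (en s'appuyant sur le lemme \textbf{(\ref{lem:image})} et la proposition \textbf{(\ref{prop:whittakerunicite})} pour la non-nullité des compositions $\Omega\circ\varphi^{J}$), on applique $\Omega_{\overline{\eta}}$ à la relation $d^2=0$ issue de la résolution de Johnson pour obtenir $\beta_{\eta,\eta_1}^{-1}\beta_{\eta_1,\eta'}^{-1}+\beta_{\eta,\eta_2}^{-1}\beta_{\eta_2,\eta'}^{-1}=0$, puis on réinjecte cette relation scalaire dans $d^2=0$ pour conclure. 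La seule différence avec le texte est d'ordre rédactionnel (le texte passe par les morphismes $\varphi^{\mathrm{wh}}$ et montre d'abord que le rapport $-\beta_{\eta,\eta_2}\beta_{\eta_2,\eta'}/(\beta_{\eta,\eta_1}\beta_{\eta_1,\eta'})$ vaut $1$), et votre remarque sur la coquille $\beta_{\eta_i,\eta}\to\beta_{\eta_i,\eta'}$ dans l'énoncé est exacte.
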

\begin{proof}[\textbf{Preuve.}]
Soient $\overline{\eta},~\overline{\eta}'$ et $\overline{\eta}_{1},~\overline{\eta}_{2}$ 
comme dans l'\'enonc\'e du lemme. Notons par $\Omega_{\overline{\eta}},\Omega_{\overline{\eta}'},\Omega_{\overline{\eta_{1}}}$ et $\Omega_{\overline{\eta}_{2}}$
les fonctionnelles de Whittaker respectifs.

D'après le lemme \textbf{(\ref{lem:image})}, pour toute paire $\zeta,\zeta'\in\mathfrak{I}_{N}$ v\'erifiant $\zeta'<_{B}\zeta$ et 
$l_{\mathfrak{I}}(\zeta')=l_{\mathfrak{I}}(\zeta)-1$, l'image $\text{Im}(\varphi_{\overline{\zeta},\overline{\zeta}'}^{J})$ contient un sous-quotient générique
si on peut dire de m\^eme de $X(\overline{\zeta})$ et $X(\overline{\zeta}')$. 
En conséquence, de la proposition \textbf{(\ref{prop:whittakerunicite})}, 
\begin{align*}
\Omega(\overline{\eta}_{i})\circ\varphi_{\overline{\eta}_{i},\overline{\eta}'}^{J}\quad\text{ et }\quad \Omega(\overline{\eta})
\circ\varphi_{\overline{\eta},\overline{\eta}_{i}}^{J},
\end{align*}
d\'efinisent des fonctionelles de Whittaker non nulles 
pour $X(\overline{\eta}_{i})~i=1,2,$ et $X(\overline{\eta})$, respectivement. 
Par l'unicit\'e de la fonctionelle de Whittaker 
il existent de constants $\beta_{\eta_{i},\eta'}$ et $\beta_{\eta,\eta_{i}},~i=1,2,$ 
de mani\`ere \`a ce que,
\begin{align*}
\Omega(\overline{\eta}')=\Omega(\overline{\eta}_{i})\circ\beta_{\eta_{i},\eta'}\varphi_{\overline{\eta}_{i},\overline{\eta}'}^{J}
\quad\text{ et }\quad\Omega(\overline{\eta}_{i})=\Omega(\overline{\eta})\circ\beta_{\eta,\eta_{i}}\varphi_{\overline{\eta},\overline{\eta}_{i}}^{J}.
\end{align*}
Notons,
\begin{align*}
\varphi_{\overline{\eta}_{i},\overline{\eta}'}^{{wh}}
=\beta_{\eta_{i},\eta'}\varphi_{\overline{\eta}_{i},\overline{\eta}'}^{J}\quad\text{ et }
\quad\varphi_{\overline{\eta},\overline{\eta}_{i}}^{{wh}}
=\beta_{\eta,\eta_{i}}\varphi_{\overline{\eta},\overline{\eta}_{i}}^{J}. 
\end{align*}
Comme,
\begin{align*}
\varphi_{\overline{\eta},\overline{\eta}_{1}}^{J}\circ \varphi_{\overline{\eta}_{1},\overline{\eta}'}^{J}+
\varphi_{\overline{\eta},\overline{\eta}_{2}}^{J}\circ \varphi_{\overline{\eta}_{2},\overline{\eta}'}^{J}=0, 
\end{align*}
on obtient,
\begin{align*}
\varphi_{\overline{\eta},\overline{\eta}_{1}}^{{wh}}\circ \varphi_{\overline{\eta}_{1},\overline{\eta}'}^{{wh}}=
-\frac{\beta_{\eta,\eta_{2}}\cdot\beta_{\eta_{2},\eta'}}{\beta_{\eta,\eta_{1}}\cdot\beta_{\eta_{1},\eta'}}
\cdot\left(\varphi_{\overline{\eta},\overline{\eta}_{2}}^{{wh}}\circ \varphi_{\overline{\eta}_{2},\overline{\eta}'}^{{wh}}\right) 
\end{align*}
mais,
\begin{align*}
\Omega_{\overline{\eta}'}&=\Omega_{\overline{\eta}}\circ\varphi_{\overline{\eta},\overline{\eta}_{1}}^{{wh}}\circ \varphi_{\overline{\eta}_{1},\overline{\eta}'}^{{wh}}\\  
&=-\frac{\lambda_{\eta,\eta_{2}}\cdot\lambda_{\eta_{2},\eta'}}{\beta_{\eta,\eta_{1}}\cdot\beta_{\eta_{1},\eta'}}
\cdot\left(\Omega_{\overline{\eta}'}\circ\varphi_{\overline{\eta},\overline{\eta}_{2}}^{{wh}}\circ \varphi_{\overline{\eta}_{2},\overline{\eta}'}^{{wh}}\right)\\
&=-\frac{\beta_{\eta,\eta_{2}}\cdot\beta_{\eta_{2},\eta'}}{\beta_{\eta,\eta_{1}}\cdot\beta_{\eta_{1},\eta'}}\cdot\Omega_{\overline{\eta}'},
\end{align*}
d'o\`u $-{\beta_{\eta,\eta_{2}}\cdot\beta_{\eta_{2},\eta'}}\cdot\beta_{\eta,\eta_{1}}^{-1}\cdot\beta_{\eta_{1},\eta'}^{-1}=1$ 
et,
\begin{align*}
\varphi_{\overline{\eta},\overline{\eta}_{1}}^{{wh}}\circ \varphi_{\overline{\eta}_{1},\overline{\eta}'}^{{wh}}=
\varphi_{\overline{\eta},\overline{\eta}_{2}}^{{wh}}\circ \varphi_{\overline{\eta}_{2},\overline{\eta}'}^{{wh}}. 
\end{align*}
Par cons\'equent,
\begin{align*}
0&=\varphi_{\overline{\eta},\overline{\eta}_{1}}^{J}\circ \varphi_{\overline{\eta}_{1},\overline{\eta}'}^{J}+
\varphi_{\overline{\eta},\overline{\eta}_{2}}^{J}\circ \varphi_{\overline{\eta}_{2},\overline{\eta}'}^{J}\\
 &=\beta_{\eta,\eta_{1}}^{-1}\varphi_{\overline{\eta},\overline{\eta}_{1}}^{{wh}}\circ\beta_{\eta_{1},\eta'}^{-1}\varphi_{\overline{\eta}_{1},\overline{\eta}'}^{{wh}}
 +\beta_{\eta,\eta_{2}}^{-1}\varphi_{\overline{\eta},\overline{\eta}_{2}}^{{wh}}\circ\beta_{\eta_{2},\eta'}^{-1}\varphi_{\overline{\eta}_{2},\overline{\eta}'}^{{wh}}\\
 &=(\beta_{\eta,\eta_{1}}^{-1}\cdot\beta_{\eta_{1},\eta'}^{-1}+\beta_{\eta,\eta_{2}}^{-1}\cdot\beta_{\eta_{2},\eta'}^{-1})
 (\varphi_{\overline{\eta},\overline{\eta}_{1}}^{{wh}}\circ \varphi_{\overline{\eta}_{1},\overline{\eta}'}^{{wh}}+
\varphi_{\overline{\eta},\overline{\eta}_{2}}^{{wh}}\circ \varphi_{\overline{\eta}_{2},\overline{\eta}'}^{{wh}})
\end{align*}
ce qui nous permet de conclure l'\'egalit\'e,
\begin{align}
\beta_{\eta,\eta_{1}}^{-1}\cdot\beta_{\eta_{1},\eta'}^{-1}+\beta_{\eta,\eta_{2}}^{-1}\cdot\beta_{\eta_{2},\eta'}^{-1}=0. 
\end{align}
\end{proof}
Revenons \`a la suite \textbf{(\ref{eq:suite8})}. Pour finir avec la construction de celle ci,
donnons la d\'efinition suivant.
\begin{deftn}
Soient $s,s'\in\mathfrak{I}_{n}$ v\'erifiant $s'<_{B}s$ 
et $l_{\mathfrak{I}}(s')=l_{\mathfrak{I}}(s)-1$. 
Soit $\lambda_{s,s'}=\beta_{s,s'}^{-1}$ la constante
associ\'ee au pair $s,s'$ par le corollaire \textbf{(\ref{cor:constante})},
alors on d\'efinit $\phi_{s,s'}:X(s')\longrightarrow X(s)$ par,
\begin{align}\label{eq:morphismefinal}
\phi_{s,s'}=\lambda_{s,s'}\phi_{s,s'}^{\text{wh}}
\end{align}
et,
\begin{align}\label{eq:flechefinal}
\phi_{j}&:X_{i-1,\theta}\longrightarrow X_{i,\theta}\nonumber\\
\phi_{j}&=\oplus_{s\in\mathfrak{I}_{N},l_{\mathfrak{I}}(s)=i}\oplus_{s'\in\mathfrak{I}_{N},l_{\mathfrak{I}}(s')=i-1}
\phi_{s,s'}
\end{align}
\end{deftn}
Avec les fl\`eches de \textbf{(\ref{eq:suite8})} d\'efinies 
par \textbf{(\ref{eq:flechefinal})} on obtient d'apr\`es la remarque que suit \`a la d\'efinition \textbf{(\ref{eq:defetape2})} et le 
corollaire \textbf{(\ref{cor:constante})} le th\'eor\`eme suivant.
\begin{theo}
La suite, 
\begin{align}
0\rightarrow \mathbf{Speh}(\delta,n)\rightarrow I(\delta,n)\rightarrow\cdots\rightarrow X_{i-1,\theta}\rightarrow X_{i,\theta}
\rightarrow \cdots \rightarrow X_{l_{\mathfrak{I}}(s_{\text{max}}),\theta}\rightarrow 0,
\end{align}
d\'efinie \`a partir des morphismes, 
\begin{align*}
\phi_{i}:X_{i+1,\theta}\rightarrow X_{i,\theta} 
\end{align*}
o\`u,
\begin{align*}
\phi_{i}:=\oplus_{\gamma\in\Gamma_{i}}\oplus_{\gamma'\in \Gamma_{i+1}}\phi_{\gamma,\gamma'} 
\end{align*}
est un complexe diff\'erentiel.
\end{theo}

\subsection{$\theta$-exactitude}
Prenons, une nouvelle fois, $p,n\in\mathbb{N}^{\times}$ avec $p>n-1$ et 
consid\'erons $\textbf{Speh}(\delta,n)$, o\`u $\delta:=\delta(p/2,-p/2)$.
Dans la section pr\'ec\'edent on a montr\'e que la suite,
\begin{align}\label{eq:suitesectionexactitude}
0\rightarrow \textbf{Speh}(\delta,n)\rightarrow I(\delta,n)\rightarrow\cdots\rightarrow X_{i-1,\theta}\xrightarrow{\phi_{i}} X_{i,\theta}
\rightarrow \cdots \rightarrow X_{l_{\mathfrak{I}}(s_{\text{max}}),\theta}\rightarrow 0,
\end{align}
avec les fleches, $\phi_{i}$, d\'efinies par \textbf{(\ref{eq:flechefinal})} est un complexe diff\'erentiel. 
Cette suite est certainement non exacte, mais on aimerait bien pouvoir montrer 
qu'elle est $\theta$-exacte, c'est-\`a-dire tel que pour 
tout $i\in[1,l_{\mathfrak{I}}(s_{\text{max}})]$ on a l'\'egalit\'e,
\begin{align*}
\text{Tr}_{\theta}(\ker(\phi_{i})/\text{im}(\phi_{i-1}))=0, 
\end{align*}
o\`u la trace tordue est prise par rapport \`a l'action de $\theta_{N}$ normalis\'ee \`a l'Arthur, comme expliqu\'e dans \textbf{\cite{Arthur}} section \textbf{(2.2)}. La $\theta$-exactitude de \textbf{(\ref{eq:suitesectionexactitude})} a comme cons\'equence l'\'egalit\'e,
\begin{align*}
\text{Tr}_{\theta}(\textbf{Speh}(\delta,n))=\sum_{i=0}^{l_{\mathfrak{I}}(s_{\text{max}})}(-1)^{i}\text{Tr}_{\theta}(X_{i}). 
\end{align*}
Cette \'egalit\'e est obtenue diff\'eremment dans \textbf{\cite{AMR}}, voir th\'eor\`emes \textbf{(9.5)} et \textbf{(9.7)} pour une preuve.
La m\'ethode utilis\'ee dans \textbf{\cite{AMR}} se base sur le calcul de l'action de l'automorphisme exterieur $\theta_{N}$ de $\mathbf{GL}(N)$
sur le complexe de Johnson  qui r\'esout $\textbf{Speh}(\delta,n)$.

Dans cet article nous pr\'esentons la preuve de la $\theta$-exactitude de \textbf{(\ref{eq:suitesectionexactitude})} pour 
$\textbf{Speh}(\delta,n)$ avec $n$ plus petit ou \'egal \`a 4.

Par example si $n=2$, alors $\textbf{Speh}(\delta,2)=\delta(p/2,-p/2)$ et il n'y a rien \`a prouver, 
car $\delta(p/2,-p/2)$ est irr\'eductible et $\theta_{4}$-invariante. 

Si maintenant nous nous situons dans $GL(4,\mathbb{R})$. 
Comme $X(s_{\text{max}})$ est l'unique sous-quotient de $X(s_{0})$ 
la suite \textbf{(\ref{eq:suitesectionexactitude})} pour $\textbf{Speh}(\delta,2)$ devient,
\begin{align}\label{eq:suitegl4}
0\rightarrow \textbf{Speh}(\delta,2)\rightarrow X(s_{0})\rightarrow  X(s_{\text{max}})\rightarrow 0. 
\end{align}
Or, \textbf{(\ref{eq:suitegl4})} est aussi la resolution de Johnson pour $\textbf{Speh}(\delta,2)$.
Par cons\'equent \textbf{(\ref{eq:suitegl4})} est exacte et donc $\theta$-exacte.\\

Les deux derni\`eres sections seront consacr\'ees \`a montrer la $\theta$-exactitude
de \textbf{(\ref{eq:suitesectionexactitude})} pour $\textbf{Speh}(\delta,n)$ avec $n=3$ et $n=4$.

Pour simplifier un peut tout ce qui suit nous allons adopter la notation suivant.
Pour tout $i\in[1,l_{\mathfrak{I}}(s_{\mathfrak{I}})]$ soit, 
\begin{align*}
m_{i}:=\frac{p+n-1}{2}-i. 
\end{align*}
Alors pour tout $s\in\mathfrak{S}_{N}$ nous allons noter,
\begin{align*}
X(s)=\{(m_{1},-m_{s_{1}}),\cdots,(m_{n},-m_{s_{n}})\}, 
\end{align*}
au lieu de,
\begin{align*}
X(s)=\delta(m_{1},-m_{s_{1}})\times\cdots\times\delta(m_{n},-m_{s_{n}}). 
\end{align*}
De plus, \`a chaque fois que le contexte le permette, on ne fera pas de diff\'erence 
entre l'\'el\'ement $s$ de $\mathfrak{S}_{N}$ et le module
standard $X(s)$ de $\mathbf{GL}(2n,\mathbb{R})$ associ\'e. 

Finalement pour tout $s\in\mathfrak{S}_{N}$ d\'efinissons,
\begin{align}\label{eq:ultimaequationde4}
K_{s}:=\bigcap_{s<_{B}s',~l_{\mathfrak{S}}(s)=l_{\mathfrak{I}}(s)-1} \ker(\phi_{s',s}^{J}). 
\end{align}

\subsection{$\theta$-exactitude dans $\mathbf{GL}(6,\mathbb{R})$}
Pour $\textbf{Speh}(\delta,3)$ le complexe diff\'erentiel \textbf{(\ref{eq:suitesectionexactitude})} est 
donn\'e par le diagramme ci-dessous. Dans celui-ci on a abusivement not\'e
une fl\`eche pleine au lieu du morphisme défini par l'equation \textbf{(\ref{eq:morphisme})}
 alors qu'une flèche pointilléé est l'opposée du morphisme précédent. 
 \begin{align}\label{eq:suitecasGl6}
 \xymatrix {\relax &0 \ar[d]& \\\relax &\overline{X(s^2)}\ar[d]&\\
  \relax &X(s^2):={\left[\begin{smallmatrix}(m_{1}-m_{3})\\(m_{2}-m_{2})\\(m_{3}-m_{1})\end{smallmatrix}\right]} \ar[dl] \ar[dr]&
  \\ X(s^1_{1}):={\left[\begin{smallmatrix}(m_{1}-m_{1})\\(m_{2}-m_{3})\\(m_{3}-m_{2})\end{smallmatrix}\right]}
  \ar[dr]& \empty&X(s_{2}^1):={\left[\begin{smallmatrix}(m_{1}-m_{2})\\(m_{2}-m_{1})\\(m_{3}-m_{3})\end{smallmatrix}\right]} \ar@{.>}[dl]
  \\&
  X(s_{\text{max}}):={\left[\begin{smallmatrix}(m_{1}-m_{1})\\(m_{2}-m_{2})\\(m_{3}-m_{3})\end{smallmatrix}\right]}\ar[d]&\\
  &0&}
 \end{align} 
Passons maintenant \`a la preuve de la $\theta$-exactitude de \textbf{(\ref{eq:suitecasGl6})}.
Situons nous dans le cran $\mathbf{(2\rightarrow 1)}$,
\begin{align*}
X(s^{2})\rightarrow \oplus_{i=1}^{2}X(s_{i}^{1}), 
\end{align*}
Notons,
\begin{align*}
X(\mu):=\{(m_{1},-m_{2}),(m_{2},-m_{3}),(m_{3},-m_{1})\},\\
X(\theta\mu):=\{(m_{1},-m_{3}),(m_{2},-m_{1}),(m_{3},-m_{2})\}.
\end{align*}
Soit $w\in\ker(\phi_{s_{1}^{1},s^{2}})\cap\ker(\phi_{s_{2}^{1},s^{2}})$. 
On a respectivement, $\phi_{\mu,s^{2}}^{J}(w)\in K_{\mu}$ et 
$\phi_{\theta\mu,s^{2}}^{J}(w)\in K_{\theta\mu}$. L'exactitude de la suite Johnson
nous permet de choisir $w_{1}\in\ker(\phi_{\theta\mu,s^{2}}^{J})$, 
respectivement $w_{2}\in \ker(\phi_{\mu,s^{2}}^{J})$
tel que,
\begin{align*}
\phi_{\mu,s^{2}}^{J}(w_{1})=\phi_{\mu,s^{2}}^{J}(w)~\text{et}~
\phi_{\theta\mu,s^{2}}^{J}(w_{2})=\phi_{\theta\mu,s^{2}}^{J}(w).
\end{align*}
Soit $y=w-(w_{1}+w_{2})$ alors $y\in\ker(\phi_{\mu,s^{2}}^{J})\cap\ker(\phi_{\theta\mu,s^{2}}^{J})$
et on peut \'ecrire, 
\begin{align*}
w=(w_{1}+y)+w_{2}.
\end{align*}
Ce qui d\'emontre le r\'esultat suivant;
\begin{lem}\label{lem:lemmeGL6}
\begin{align*}
\ker(\phi_{1})&=\ker(\phi_{\mu,s^{2}}^{J})+\ker(\phi_{\theta\mu,s^{2}}^{J})
\end{align*}
\end{lem}
Du lemme \textbf{(\ref{lem:lemmeGL6})} ils nous est donc possible conclure,
\begin{align*}
\text{Tr}_{\theta}(\ker(\phi_{1}))&=\text{Tr}_{\theta}(\ker(\phi_{\mu,s^{2}}^{J})+
\ker(\phi_{\theta\mu,s^{2}}^{J}))\\
&=\text{Tr}_{\theta}(\ker(\phi_{\mu,s^{2}}^{J})\cap \ker(\phi_{\theta\mu,s^{2}}^{J}))\\
&=\text{Tr}_{\theta}(\overline{X(s^{2})}).
\end{align*}
et clairement,
\begin{align*}
\text{Tr}_{\theta}(\ker(\phi_{1})/\overline{X(s^{2})})=0. 
\end{align*}
Cran $\mathbf{1\rightarrow 0}$,
\begin{align*}
\oplus_{j=1}^{2}X(s_{j}^{1})\rightarrow X(s_{\text{max}}), 
\end{align*}
où on rappelle que,
\begin{align*}
X(s_{\text{max}}):=\{(m_{1},-m_{1}),(m_{2},-m_{2}),(m_{3},-m_{3})\}. 
\end{align*}
Pour chaque $1\leq j\leq 2$ on a, car la multiplicité de $X(s_{\text{max}})$ en tant que sous-quotient de $X(s_{j}^{1})$ est égal à 1, l'\'egalit\'e,
\begin{align*}
\ker(\phi_{s_{\text{max}},s_{j}^{1}})=\overline{X(s_{j}^{1})}.
\end{align*}
Comme en plus chaque $\overline{X(s_{j}^{1})},~1\leq j\leq 2,$ 
est contenu dans, $\text{im}(\phi_{1})$, on conclut,
\begin{align*}
\ker(\phi_{0})/\text{im}(\phi_{1})=0.
\end{align*}

\subsection{$\theta$-Resolution dans $\textbf{GL}(8,\mathbb{R})$}\label{sectiongl8}
Situons nous \`a pr\'esent dans $\mathbf{GL}(8,\mathbb{R})$ et consid\'erons $\textbf{Speh}(\delta,4)$. 
Pour $\textbf{Speh}(\delta,4)$ la suite \textbf{(\ref{eq:suitesectionexactitude})} est donn\'ee par le diagramme ci-dessous.
Dans celui-ci on a, comme pour  $\mathbf{GL}(6,\mathbb{R})$, abusivement not\'e  
une fl\`eche pleine au lieu du morphisme défini par l'equation \textbf{(\ref{eq:morphismefinal})},
alors qu'une flèche pointillée est l'opposée du morphisme précédent.  
\begin{align}\label{eq:suiteexactcasGl8} 
  \xymatrix {\relax &0 \ar[d]& \\\relax &\overline{X(s^4)}\ar[d]&\\
  \relax &X(s^4):={\left[\begin{smallmatrix}(m_{1}-m_{4})\\(m_{2}-m_{3})\\(m_{3}-m_{2})\\(m_{4}-m_{1})\end{smallmatrix}\right]} \ar[dl] \ar [dr]&
  \\ X(s^3_{1}):={\left[\begin{smallmatrix}(m_{1}-m_{4})\\(m_{2}-m_{2})\\(m_{3}-m_{3})\\(m_{4}-m_{1})\end{smallmatrix}\right]}
  \ar[d]\ar[dr]\ar[rrd]& \empty&X(s_{2}^3):={\left[\begin{smallmatrix}(m_{1}-m_{2})\\(m_{2}-m_{4})\\(m_{3}-m_{1})\\(m_{4}-m_{2})\end{smallmatrix}\right]} \ar@{.>}[lld]\ar@{.>}[dl]\ar@{.>}[d]
  \\
  X(s_{1}^2):={\left[\begin{smallmatrix}(m_{1}-m_{3})\\(m_{2}-m_{2})\\(m_{3}-m_{1})\\(m_{4}-m_{4})\end{smallmatrix}\right]}\ar[d]\ar[dr]&X(s_{2}^2):={\left[\begin{smallmatrix}(m_{1}-m_{2})\\(m_{2}-m_{1})\\(m_{3}-m_{4})\\(m_{4}-m_{3})\end{smallmatrix}\right]} \ar@{.>}[dl]\ar@{.>}[rd]&X(s_{3}^2):={\left[\begin{smallmatrix}(m_{1}-m_{1})\\(m_{2}-m_{4})\\(m_{3}-m_{3})\\(m_{4}-m_{2})\end{smallmatrix}\right]} \ar@{.>}[dl] \ar[d]
  \\
  X(s_{1}^1):={\left[\begin{smallmatrix}(m_{1}-m_{2})\\(m_{2}-m_{1})\\(m_{3}-m_{3})\\(m_{4}-m_{4})\end{smallmatrix}\right]}\ar[dr]&X(s_{2}^1):={\left[\begin{smallmatrix}(m_{1}-m_{1})\\(m_{2}-m_{3})\\(m_{3}-m_{2})\\(m_{4}-m_{4})\end{smallmatrix}\right]}\ar@{.>}[d]&X(s_{3}^1):={\left[\begin{smallmatrix}(m_{1}-m_{1})\\(m_{2}-m_{2})\\(m_{3}-m_{4})\\(m_{4}-m_{3})\end{smallmatrix}\right]}\ar@{.>}[dl]
  \\&
  X(s_{\text{max}}):={\left[\begin{smallmatrix}(m_{1}-m_{1})\\(m_{2}-m_{2})\\(m_{3}-m_{3})\\(m_{4}-m_{4})\end{smallmatrix}\right]}\ar[d]&\\
  &0&
  }.\end{align}
Il nous faut \`a pr\'esent d\'emontrer que la suite \textbf{(\ref{eq:suiteexactcasGl8})} est $\theta$-exacte.

Situons nous dans le cran $\mathbf{(4\rightarrow 3)}$,
\begin{align*}
X(s^{4})\rightarrow \oplus_{i=1}^{2}X(s_{i}^{3}).
\end{align*}
Notons,
\begin{align*}
X(\mu):=\{(m_{1},-m_{3}),(m_{2},-m_{4}),(m_{3},-m_{2}),(m_{4},-m_{1})\},\\
X(\theta\mu):=\{(m_{1},-m_{4}),(m_{2},-m_{3}),(m_{3},-m_{1}),(m_{4},-m_{2})\}.
\end{align*}
On aura besoin du r\'esultat suivant;
\begin{lem}\label{lem:lemcran43}
\begin{align*}
\ker(\phi_{3})=
(\ker(\phi_{\mu,s^{4}}^{J})\cap \ker(\phi_{s_{1}^{3},s^{4}}))+
(\ker(\phi_{\theta\mu,s^{4}}^{J})\cap \ker(\phi_{s_{1}^{3},s^{4}})) 
\end{align*}
\end{lem}
\begin{proof}[\textbf{Preuve.}]
Soit $w\in\ker(\phi_{s_{1}^{3},s^{4}})\cap\ker(\phi_{s_{2}^{3},s^{4}})$. 
Comme la suite de Johnson d\'efinit un complexe diff\'erentiel, $\phi_{\mu,s^{4}}^{J}(w)\in K_{\mu}$ 
(Voir equation \textbf{(\ref{eq:ultimaequationde4})} pour la définition de $K_{\mu}$) et 
$\phi_{\theta\mu,s^{4}}^{J}(w)\in K_{\theta\mu}$. L'exactitude
de la suite de Johnson nous permet de choisir $w_{1}\in\ker(\phi_{\theta\mu,s^{4}}^{J})\cap\ker(\phi_{s_{1}^{3},s^{4}})$, 
respectivement $w_{2}\in \ker(\phi_{\mu,s^{3}}^{J})\cap\ker(\phi_{s_{1}^{3},s^{3}})$
tel que,
\begin{align*}
\phi_{\mu,s^{4}}^{J}(w_{1})=\phi_{\mu,s^{4}}^{J}(w)~\text{et}~
\phi_{\theta\mu,s^{4}}^{J}(w_{2})=\phi_{\theta\mu,s^{4}}^{J}(w).
\end{align*}
Soit $y=w-(w_{1}+w_{2})$ alors $y\in\ker(\phi_{\mu,s^{4}}^{J})\cap\ker(\phi_{\theta\mu,s^{4}}^{J})\cap\ker(\phi_{s_{1}^{3},s^{4}})$
et on peut \'ecrire, 
\begin{align*}
w=(w_{1}+y)+w_{2}.
\end{align*}
Ce qui nous donne l'inclusion,
$\text{ker}(\phi_{3})\subset(\ker(\phi_{\mu,s^{4}}^{J})\cap \ker(\phi_{s_{1}^{3},s^{4}}))+
(\ker(\phi_{\theta\mu,s^{4}}^{J})\cap \ker(\phi_{s_{1}^{3},s^{4}}))$.
L'inclusion oppos\'e est evident du fait que la suite de Johnson est un complexe diff\'erentiel.
\end{proof}
Du lemme \textbf{(\ref{lem:lemcran43})} ils nous est donc possible conclure,
\begin{align*}
\text{Tr}_{\theta}(\ker(\phi_{3,4}))&=\text{Tr}_{\theta}(\ker(\phi_{\mu,s^{4}}^{J})\cap \ker(\phi_{s_{1}^{3},s^{4}})+
(\ker(\phi_{\theta\mu,s^{4}}^{J})\cap \ker(\phi_{s_{1}^{3},s^{4}}))\\
&=\text{Tr}_{\theta}(\ker(\phi_{\mu,s^{4}}^{J})\cap \ker(\phi_{\theta\mu,s^{4}}^{J})\cap \ker(\phi_{s_{1}^{3},s^{4}}))\\
&=\text{Tr}_{\theta}(\overline{X(s^{4})}).
\end{align*}
et clairement,
\begin{align*}
\text{Tr}_{\theta}(\ker(\phi_{3,4})/\text{im}(\phi_{4}))=\text{Tr}_{\theta}(\ker(\phi_{3,4})/\overline{X(s^{4})})=0. 
\end{align*}
Cran $\mathbf{3\rightarrow 2}$,
\begin{align*}
\oplus_{k=1}^{2}X(s_{k}^{3})\rightarrow \oplus_{j=1}^{3}X(s_{j}^{2}), 
\end{align*}
o\`u on rappelle que,
\begin{align*}
X(s_{1}^{2})=\{(m_{1},-m_{3}),(m_{2},-m_{2}),(m_{3},-m_{1}),(m_{4},-m_{4})\},\\
X(s_{2}^{2})=\{(m_{1},-m_{2}),(m_{2},-m_{1}),(m_{3},-m_{4}),(m_{4},-m_{3})\},\\
X(s_{3}^{2})=\{(m_{1},-m_{1}),(m_{2},-m_{4}),(m_{3},-m_{3}),(m_{4},-m_{2})\}.
\end{align*}
Le fait que tout \'el\'ement de $X(s_{1}^{3})$ soit possible d'\^etre \'ecrit comme la somme d'un \'el\'ement
dans $\text{im}(\phi_{3})$ plus un \'el\'ement dans $X(s_{2}^{3})$ nous permet de
ram\`ener le probl\`eme de l'\'etude de 
$\ker(\phi_{2})/\text{im}(\phi_{3})$ \`a l'\'etude du space $\ker(\phi_{2})\cap X(s_{2}^{3})/\text{im}(\phi_{3})\cap X(s_{2}^{3})$.
Notons,
\begin{align*}
X(\nu):=\{(m_{1},-m_{2}),(m_{2},-m_{4}),(m_{3},-m_{1}),(m_{4},-m_{3})\},\\ 
X(\theta\nu):=\{(m_{1},-m_{3}),(m_{2},-m_{1}),(m_{3},-m_{4}),(m_{4},-m_{2})\}.
\end{align*}
Comme pour le cran pr\'ec\'edent on d\'emontre,
\begin{lem}\label{lem:lemcran32}
\begin{align*}
\ker(\phi_{2})\cap X(s_{2}^{3})&=(\ker(\phi_{\nu,s_{2}^{3}}^{J})\cap\ker(\phi_{s_{1}^{2},s_{2}^{3}})\cap 
\ker(\phi_{s_{3}^{2},s_{2}^{3}}))\\ 
&\qquad\qquad+(\ker(\phi_{\theta\nu,s_{2}^{3}}^{J})\cap\ker(\phi_{s_{1}^{2},s_{2}^{3}})\cap \ker(\phi_{s_{3}^{2},s_{2}^{3}})). 
\end{align*}
\end{lem}
\begin{proof}[\textbf{Preuve.}]
Comme,
\begin{align*}
\ker(\phi_{s_{2}^{2},s_{2}^{3}})=&\ker(\phi_{\nu,s_{2}^{3}}^{J})+\ker(\phi_{\theta\nu,s_{2}^{3}}^{J})\\
&+(\phi_{\nu,s_{2}^{3}}^{J})^{-1}(\ker(\phi_{s_{2}^{2},\nu})\setminus\{0\})\cap (\phi_{\theta\nu,s_{2}^{3}}^{J})^{-1}(\ker(\phi_{s_{2}^{2},\theta\nu}^{J})\setminus\{0\}),
\end{align*}
on en déduit que,
\begin{align*}
\ker(\phi_{2})\cap X(s_{2}^{3})&=\ker(\phi_{s_{1}^{2},s_{2}^{3}})\cap\ker(\phi_{s_{3}^{2},s_{2}^{3}})
\cap[\ker(\phi_{\nu,s_{2}^{3}}^{J})+\ker(\phi_{\theta\nu,s_{2}^{3}}^{J})\\
&+(\phi_{\nu,s_{2}^{3}}^{J})^{-1}(\ker(\phi_{s_{2}^{2},\nu})\setminus\{0\})\cap (\phi_{\theta\nu,s_{2}^{3}}^{J})^{-1}(\ker(\phi_{s_{2}^{2},\theta\nu}^{J})\setminus\{0\})].
\end{align*}
Choisissons 
$x\in(\phi_{\nu,s_{2}^{3}}^{J})^{-1}(\ker(\phi_{s_{2}^{2},\nu}^{J})\setminus\{0\})\cap (\phi_{\theta\nu,s_{2}^{3}}^{J})^{-1}(\ker(\phi_{s_{2}^{2},\theta\nu}^{J})\setminus\{0\})$, 
$y\in\ker(\phi_{\nu,s_{2}^{3}}^{J})$ 
et $z\in \ker(\phi_{\theta\nu,s_{2}^{3}}^{J})$, tels que 
$x+y+z\in\ker(\phi_{s_{1}^{2},s_{2}^{3}})\cap \ker(\phi_{s_{3}^{2},s_{2}^{3}})$. 
Soit $\eta\in\mathfrak{S}_{4}$ tel que $\nu>_{B}\eta$, $l_{\mathfrak{S}}(\eta)=l_{\mathfrak{S}}(\nu)-1$ et
$\eta\neq s_{2}^{2}$. Notons aussi par $\eta'$ 
l'unique élémént de $\mathfrak{S}_{4}$ diff\'erent de $\nu$ tel que $s_{2}^{3}>_{B}\eta'>_{B}\eta$, alors
$\eta'=s_{1}^{2}$ ou $\eta'=s_{3}^{2}$. On a,
\begin{align*}
\phi_{\eta,\nu}^{J}\circ\phi_{\nu,s_{2}^{3}}^{J}(x+z)&=
\phi_{\eta,\nu}^{J}\circ\phi_{\nu,s_{2}^{3}}^{J}(x+y+z)\\
&=-\phi_{\eta,\eta'}^{J}\circ\phi_{\eta',s_{2}^{3}}^{J}(x+y+z)\\
&=-\phi_{\eta,\eta'}^{J}(0)=0.
\end{align*}
En plus,
\begin{align*}
\phi_{s_{2}^{2},\nu}^{J}\circ\phi_{\nu,s_{2}^{3}}^{J}(x+z)=
-\phi_{s_{2},\theta\nu}^{J}\circ\phi_{\theta\nu,s_{2}^{3}}^{J}(x)=0.
\end{align*} 
Donc,
\begin{align*}
\phi_{\nu,s_{2}^{3}}^{J}(x)+\phi_{\nu,s_{2}^{3}}^{J}(z)&\in K_{\nu}.
\end{align*}
Le m\^eme calcul est possible d'\^etre fait, cette fois-ci, pour $\phi_{\theta\nu,s_{2}^{3}}^{J}(x+y)$, 
d'o\`u on obtient,
\begin{align*}
\phi_{\theta\nu,s_{2}^{3}}^{J}(x)+\phi_{\theta\nu,s_{2}^{3}}^{J}(y)&\in K_{\theta\nu}.
\end{align*}
Du lemme suivant (dont on donne une démonstration \`a la fin de la présent preuve), 
\begin{lem}\label{lem:dulem}
\begin{align*}
K_{\nu}&= \phi_{\nu,s_{2}^{3}}^{J}(\ker(\phi_{\theta\nu,s_{2}^{3}}^{J})\cap\ker(\phi_{s_{1}^{2},s_{2}^{3}})
\cap \ker(\phi_{s_{3}^{2},s_{2}^{3}})),\\ 
K_{\theta\nu}&=\phi_{\theta\nu,s_{2}^{3}}^{J}(\ker(\phi_{\nu,s_{2}^{3}}^{J})\cap\ker(\phi_{s_{1}^{2},s_{2}^{3}})
\cap \ker(\phi_{s_{3}^{2},s_{2}^{3}})),
\end{align*}
\end{lem}
il existe $w_{1}\in \ker(\phi_{\theta\nu,s_{2}^{3}}^{J})\cap\ker(\phi_{s_{1}^{2},s_{2}^{3}})
\cap \ker(\phi_{s_{3}^{2},s_{2}^{3}})$, 
$w_{2}\in \ker(\phi_{\nu,s_{2}^{3}}^{J})\cap\ker(\phi_{s_{1}^{2},s_{2}^{3}})
\cap \ker(\phi_{s_{3}^{2},s_{2}^{3}})$, 
tel que,
\begin{align*}
\phi_{\nu,s_{2}^{3}}^{J}(w_{1})=\phi_{\nu,s_{2}^{3}}^{J}(x)+\phi_{\nu,s_{2}^{3}}^{J}(z)\qquad 
\phi_{\theta\nu,s_{2}^{3}}^{J}(w_{2})=\phi_{\theta\nu,s_{2}^{3}}^{J}(x)+\phi_{\theta\nu,s_{2}^{3}}^{J}(y). 
\end{align*}
Il nous est d\'esormais possibl\'e d'\'ecrire,
\begin{align*}
x+y+z=\frac{x+y+z-w_{1}+w_{2}}{2}+\frac{x+y+z+w_{1}-w_{2}}{2},
\end{align*}
o\`u le premi\`ere terme \`a droite de l'\'egalit\'e est contenu dans $\ker(\phi_{\nu,s_{2}^{3}}^{J})$ et 
le deuxi\`eme dans $\ker(\phi_{\theta\nu,s_{2}^{3}}^{J})$. Donc,
\begin{align*}
\ker(\phi_{2,3})\cap X(s_{2}^{3})&=\ker(\phi_{s_{1}^{2},s_{2}^{3}})\cap \ker(\phi_{s_{3}^{2},s_{2}^{3}})
\cap(\ker(\phi_{\nu,s_{2}^{3}}^{J})+\ker(\phi_{\theta\nu,s_{2}^{3}}^{J})). 
\end{align*}
Consid\'erons maintenant $x\in\ker(\phi_{\nu,s_{2}^{3}}^{J})$ et $y\in\ker(\phi_{\theta\nu,s_{2}^{3}}^{J})$ tels que,
$x+y\in\ker(\phi_{s_{1}^{2},s_{2}^{3}})\cap \ker(\phi_{s_{3}^{2},s_{2}^{3}})$.
On d\'efinit,
\begin{align*}
z_{1}=\phi_{s_{1}^{2},s_{2}^{3}}(x)=-\phi_{s_{1}^{2},s_{2}^{3}}(y)
\end{align*}
et
\begin{align*}
z_{2}=\phi_{s_{3}^{2},s_{2}^{3}}(x)=-\phi_{s_{3}^{2},s_{2}^{3}}(y).
\end{align*}
Notons,
\begin{align*}
X(\alpha):=\{(m_{1},-m_{2}),(m_{2},-m_{3}),(m_{3},-m_{1}),(m_{4},-m_{4})\},\\
X(\theta\alpha):=\{(m_{1},-m_{3}),(m_{2},-m_{1}),(m_{3},-m_{2}),(m_{4},-m_{4})\}.
\end{align*}
Comme,
\begin{align*}
\phi_{\alpha,s_{1}^{2}}^{J}(z_{1})&=\phi_{\alpha,s_{1}^{2}}^{J}\circ\phi_{s_{1}^{2},s_{2}^{3}}^{J}(x)\\
&=-\phi_{\alpha,\nu}^{J}\circ\phi_{\nu,s_{2}^{3}}^{J}(x)=0
\end{align*}
et
\begin{align*}
\phi_{\theta\alpha,s_{1}^{2}}^{J}(z_{1})&=\phi_{\theta\alpha,s_{1}^{2}}^{J}\circ\phi_{s_{1}^{2},s_{2}^{3}}^{J}(-y)\\
&=\phi_{\alpha,\theta\nu}^{J}\circ\phi_{\theta\nu,s_{2}^{3}}^{J}(-y)=0.
\end{align*}
on conclut $z_{1}\in K_{s_{1}^{2}}$.
Le m\^eme calcul est possible d'\^etre fait, cette fois-ci, pour $z_{2}$, 
alors, $z_{2}\in K_{s_{3}^{2}}$.
Au cours de la preuve de la $\theta$-exactitude pour le cran $\mathbf{(2\rightarrow 1)}$ 
on donne une démonstration des deux \'egalit\'es suivants, 
\begin{align*}
K_{s_{1}^{2}}=\overline{X(s_{1}^{2})},\\
K_{s_{3}^{2}}=\overline{X(s_{3}^{2})}.
\end{align*} 
On peut donc choisir, 
$\bar{z}_{1}\in \ker(\phi_{s_{3}^{2},s_{2}^{3}})\cap\ker(\phi_{\nu,s_{2}^{3}}^{J})\cap \ker(\phi_{\theta\nu,s_{2}^{3}}^{J})$,
$\bar{z}_{2}\in \ker(\phi_{s_{1}^{2},s_{2}^{3}})\cap\ker(\phi_{\nu,s_{2}^{3}}^{J})\cap \ker(\phi_{\theta\nu,s_{2}^{3}}^{J})$
tel que $\phi_{s_{1}^{2},s_{2}^{3}}(\bar{z}_{1})=z_{1}$ et $\phi_{s_{3}^{2},s_{2}^{3}}(\bar{z}_{2})=z_{2}$. 
Ce qui nous permet d'\'ecrire, 
\begin{align*}
x+y=(x-\bar{z}_{1}-\bar{z}_{2})+(y+\bar{z}_{1}+\bar{z}_{2}),
\end{align*}
o\`u 
$x-\bar{z}_{1}-\bar{z}_{2}\in 
\ker(\phi_{\nu,s_{2}^{3}}^{J})\cap\ker(\phi_{s_{1}^{2},s_{2}^{3}})\cap \ker(\phi_{s_{3}^{2},s_{2}^{3}})$ 
et 
$y+\bar{z}_{1}+\bar{z}_{2}\in 
\ker(\phi_{\theta\nu,s_{2}^{3}}^{J})\cap\ker(\phi_{s_{1}^{2},s_{2}^{3}})\cap \ker(\phi_{s_{3}^{2},s_{2}^{3}})$.

Il ne nous reste maintenant qu'\`a d\'emontrer le lemme \textbf{(\ref{lem:dulem})}.
Soit $z\in K_{\nu}$. 
Notons,
\begin{align*}
X(\lambda):=\{(m_{1},-m_{4}),(m_{2},-m_{2}),(m_{3},-m_{1}),(m_{4},-m_{3})\},\\
X(\theta\lambda):=\{(m_{1},-m_{3}),(m_{2},-m_{2}),(m_{3},-m_{4}),(m_{4},-m_{1})\},
\end{align*}
et,
\begin{align*}
X(\zeta):=\{(m_{1},-m_{2}),(m_{2},-m_{4}),(m_{3},-m_{3}),(m_{4},-m_{1})\},\\
X(\theta\zeta):=\{(m_{1},-m_{4}),(m_{2},-m_{1}),(m_{3},-m_{3}),(m_{4},-m_{2})\}.
\end{align*}
Par l'exactitude de la suite de Johnson il existe $a\in X(\lambda)$,  
$\bar{a}\in X(\theta\lambda)$, $b\in X(\zeta)$, $\bar{b}\in X(\theta\zeta)$
et $c\in X(s_{2}^{3})$, tel que,
\begin{align*}
z=\phi_{\nu,\lambda}^{J}(a)+\phi_{\nu,\zeta}^{J}(b)+\phi_{\nu,s_{2}^{3}}^{J}(c), 
\end{align*}
et pour tout $\eta\in\mathfrak{S}_{4}$ avec $\eta\neq\nu$ et $l_{\mathfrak{S}}(\eta)=l_{\mathfrak{S}}(\nu)$,
\begin{align*}
0=\phi_{\eta,\lambda}^{J}(a)+\phi_{\eta,\theta\lambda}^{J}(\bar{a})+\phi_{\eta,\zeta}^{J}(b)+
\phi_{\eta,\theta\zeta}^{J}(\bar{b})+\phi_{\eta,s_{2}^{3}}^{J}(c). 
\end{align*}
Soit $u\in X(\mu)$, respectivement $\bar{u}\in X(\theta\mu)$, 
tel que $\phi_{\theta\lambda,\mu}^{J}(u)=-\bar{a}$, respectivement 
$\phi_{\theta\zeta,\theta\mu}^{J}(\bar{u})=-\bar{b}$. 
On \'ecrit $a_{1}=a+\phi_{\lambda,\theta\mu}^{J}(\bar{u}),$ $b_{1}=b+\phi_{\zeta,\mu}^{J}(u)$
et $c_{1}=c+\phi_{s_{2}^{3},\mu}^{J}(u)+\phi_{s_{2}^{3},\theta\mu}^{J}(\bar{u})$. Alors
\begin{align*}
z&=\phi_{\nu,\lambda}^{J}(a_{1})+\phi_{\nu,\zeta}^{J}(b_{1})+\phi_{\nu,s_{2}^{3}}^{J}(c_{1}),\\ 
0&=\phi_{\theta\nu,s_{2}^{3}}^{J}(c_{1}),
\end{align*}
et pour tout $\eta\in\mathfrak{S}_{4}$ avec $\eta\neq\nu,~\eta\neq\theta\nu$ et $l_{\mathfrak{S}}(\eta)=l_{\mathfrak{S}}(\nu)$,
\begin{align*}
0=\phi_{\eta,\lambda}^{J}(a_{1})+\phi_{\eta,\zeta}^{J}(b_{1})+\phi_{\eta,s_{2}^{3}}^{J}(c_{1}). 
\end{align*}
Notons,
\begin{align*}
X(\alpha):=\{(m_{1},-m_{2}),(m_{2},-m_{3}),(m_{3},-m_{1}),(m_{4},-m_{4})\},\\
X(\theta\alpha):=\{(m_{1},-m_{3}),(m_{2},-m_{1}),(m_{3},-m_{2}),(m_{4},-m_{4})\}.
\end{align*}
Et,
\begin{align*}
X(\tau):=\{(m_{1},-m_{4}),(m_{2},-m_{1}),(m_{3},-m_{2}),(m_{4},-m_{3})\},\\
X(\theta\tau):=\{(m_{1},-m_{2}),(m_{2},-m_{3}),(m_{3},-m_{4}),(m_{4},-m_{1})\},
\end{align*}
On a, car $s_{2}^{3}\ngtr_{B}\tau$ et $\zeta\ngtr_{B}\tau$, 
l'\'egalit\'e $\phi_{\tau,\lambda}^{J}(a_{1})=0$, donc,
\begin{align*}
\phi_{\theta\alpha,s_{1}^{2}}^{J}\circ \phi_{s_{1}^{2},\lambda}^{J}(a_{1})=
-\phi_{\theta\alpha,\tau}^{J}\circ \phi_{\tau,\lambda}^{J}(a_{1})=0.
\end{align*}
et l'\'el\'ement $\phi_{s_{1}^{2},\lambda}^{J}(a_{1})$ appartient \`a 
$\ker(\phi_{\theta\alpha,s_{1}^{2}}^{J})$, par cons\'equent, 
\begin{align*}
\phi_{\alpha,s_{1}^{2}}^{J}\circ\phi_{s_{1}^{2},\lambda}^{J}(a_{1})&\in\ker(\phi_{s_{1}^{1},\alpha}^{J})\cap
\ker(\phi_{s_{2}^{1},\alpha}^{J}).
\end{align*}
Notons par $\phi_{s_{1}^{1},\alpha}^{GL(6)}$, respectivement $\phi_{s_{2}^{1},\alpha}^{GL(6)}$, 
le morphismes de Johnson entre,
\begin{align*}
\{(m_{1},-m_{2}),(m_{2},-m_{3}),(m_{3},-m_{1})\}\text{ et }\{(m_{1},-m_{2}),(m_{2},-m_{1}),(m_{3},-m_{3})\},
\end{align*}
respectivement,
\begin{align*}
\{(m_{1},-m_{2}),(m_{2},-m_{3}),(m_{3},-m_{1})\})\text{ et }\{(m_{1},-m_{1}),(m_{3},-m_{2}),(m_{2},-m_{3})\},
\end{align*} 
qu'intervient dans la résolution de Johnson de $\overline{(m_{1},-m_{3}),(m_{2},-m_{2}),(m_{3},-m_{1})}$.

Alors,
\begin{align*}
\ker(\phi_{s_{1}^{1},\alpha}^{J})\cap\ker(\phi_{s_{2}^{1},\alpha}^{J})
=\ker\phi_{s_{1}^{1},\alpha}^{GL(6)}\cap\ker\phi_{s_{2}^{1},\alpha}^{GL(6)}\times X(m_{4},-m_{4}).
\end{align*}
La multiplicité de $\{(m_{1},-m_{1}),(m_{2},-m_{2}),(m_{3},-m_{3})\}$ 
en tant que sous-quotient de $\{(m_{1},-m_{2}),(m_{2},-m_{3}),(m_{3},-m_{1})\}$ est, 
d'après le théorème \textbf{(6.2)} de \textbf{\cite{Casselman-Shahidi}}, 
égal à 1,  par conséquent,
\begin{align*}
\ker\phi_{s_{1}^{1},\alpha}^{GL(6)}\cap\ker\phi_{s_{2}^{1},\alpha}^{GL(6)}=\overline{X(\{\delta(m_{1},-m_{2}),\delta(m_{2},-m_{3}),\delta(m_{3},-m_{1})\})}, 
\end{align*}
et on peut \'ecrire,
\begin{align*}
\ker(\phi_{s_{1}^{1},\alpha}^{J})\cap\ker(\phi_{s_{2}^{1},\alpha}^{J})
&=\overline{X(\{\delta(m_{1},-m_{2}),\delta(m_{2},-m_{3}),\delta(m_{3},-m_{1})\})}\times X(m_{4},-m_{4})\\
&=\overline{X(\alpha)}.
\end{align*}
En cons\'equence,
\begin{align*}
\phi_{\alpha,s_{1}^{2}}^{J}\circ\phi_{s_{1}^{2},\lambda}^{J}(a_{1})\in\overline{X(\alpha)},
\end{align*}
et on peut choisir un \'el\'ement $\bar{v}\in X(\theta\mu)$ tel que, $\phi_{\lambda,\theta\mu}^{J}(\bar{v})=-a_{1}$
et $\phi_{\theta\zeta,\theta\mu}^{J}(v)=0$. De mani\`ere analogue on choisit 
un \'el\'ement $v\in X(\mu)$ tel que, $\phi_{\zeta,\mu}^{J}(v)=-b_{1}$
et $\phi_{\theta\lambda,\mu}^{J}(v)=0$. On
\'ecrit $c_{2}=c_{1}+\phi_{s_{2}^{3},\mu}^{J}(v)+\phi_{s_{2}^{3},\theta\mu}^{J}(\bar{v})$,
alors $c_{2}\in \ker(\phi_{\theta\nu,s_{2}^{3}}^{J})\cap\ker(\phi_{s_{1}^{2},s_{2}^{3}})
\cap \ker(\phi_{s_{3}^{2},s_{2}^{3}})$
et $\phi_{\nu,s_{2}^{3}}^{J}(c_{2})=z$.
\end{proof}
Du lemme \textbf{(\ref{lem:lemcran32})} ils nous est d\'esormais possible d'\'ecrire,
\begin{align*}
\text{Tr}_{\theta}(\ker(\phi_{2,3})\cap X(s_{2}^{3}))
&=\text{Tr}_{\theta}[\ker(\phi_{\nu,s_{2}^{3}}^{J})\cap\ker(\phi_{s_{1}^{2},s_{2}^{3}})\cap \ker(\phi_{s_{3}^{2},s_{2}^{3}})\\ 
&\qquad\qquad+\ker(\phi_{\theta\nu,s_{2}^{3}}^{J})\cap\ker(\phi_{s_{1}^{2},s_{2}^{3}})\cap \ker(\phi_{s_{3}^{2},s_{2}^{3}})]\\
&=\text{Tr}_{\theta}(K_{s_{2}^{3}})
\end{align*}
Comme en plus,
\begin{align*}
K_{s_{2}^{3}}=\phi_{s_{2}^{3},\mu}^{J}\left(\bigcap_{\nu'\neq s_{2}^{3}}\ker(\phi_{\nu',\mu}^{J})\right)+
\phi_{s_{2}^{3},\theta\mu}^{J}\left(\bigcap_{\nu'\neq s_{2}^{3}}\ker(\phi_{\nu',\theta\mu}^{J})\right) 
\end{align*}
on obtient,
\begin{align*}
\text{Tr}_{\theta}(\ker(\phi_{2,3})\cap X(s_{2}^{3}))=\text{Tr}_{\theta}[\phi_{s_{2}^{3},\mu}(\bigcap_{\nu'\neq s_{2}^{3}}\ker(\phi_{\nu',\mu}^{J}))
\cap\phi_{s_{2}^{3},\theta\mu}^{J}(\bigcap_{\nu'\neq s_{2}^{3}}\ker(\phi_{\nu',\theta\mu}^{J}))].
\end{align*}
Soit $x\in \bigcap_{\nu'\neq s_{2}^{3}}\ker(\phi_{\nu',\mu}^{J})$ et $y\in \bigcap_{\nu'\neq s_{2}^{3}}\ker(\phi_{\nu',\theta\mu}^{J})$ tel que,
\begin{align*}
\phi_{s_{2}^{3},\mu}^{J}(x)=\phi_{s_{2}^{3},\theta\mu}^{J}(y). 
\end{align*}
Par l'exactitude de la suite de Johnson il existe un \'el\'ement $z\in X(s^{4})\cap \ker(\phi_{s_{1}^{3},s^{4}})$ 
tel que, 
\begin{align*}
\phi_{\mu,s^{4}}^{J}(z)=x, \qquad \phi_{\theta\mu,s^{4}}^{J}(z)=-y,
\end{align*}
d'o\`u,
\begin{align*}
\phi_{s_{2}^{3},\mu}^{J}\left(\bigcap_{\nu'\neq s_{2}^{3}}\ker(\phi_{\nu',\mu}^{J})\right)
\cap\phi_{s_{2}^{3},\theta\mu}^{J}\left(\bigcap_{\nu'\neq s_{2}^{3}}\ker(\phi_{\nu',\theta\mu}^{J})\right)=
\phi_{3}(X(s^{4})\cap \ker(\phi_{s_{1}^{3},s^{4}}))
\end{align*}
et on peut clairement conclure que,
\begin{align*}
\text{Tr}_{\theta}(X(s_{2}^{3})\cap\ker(\phi_{2})/X(s_{2}^{3})\cap\text{im}(\phi_{3}))=0. 
\end{align*}
Cran $\mathbf{2\rightarrow 1}$,
\begin{align*}
\oplus_{k=1}^{3}X(s_{k}^{2})\rightarrow \oplus_{j=1}^{3}X(s_{j}^{1}), 
\end{align*}
o\`u on rappelle que,
\begin{align*}
X(s_{1}^{1}):=\{(m_{1},-m_{2}),(m_{2},-m_{1}),(m_{3},-m_{3}),(m_{4},-m_{4})\},\\
X(s_{2}^{1}):=\{(m_{1},-m_{1}),(m_{2},-m_{3}),(m_{3},-m_{2}),(m_{4},-m_{4})\},\\
X(s_{3}^{1}):=\{(m_{1},-m_{1}),(m_{2},-m_{2}),(m_{3},-m_{4}),(m_{4},-m_{3})\}.
\end{align*}
Le fait que tout \'el\'ement dans $X(s_{1}^{2})$ soit possible d'\^etre \'ecrit comme la somme d'un \'el\'ement
dans $\text{im}(\phi_{2,3})$ plus un \'el\'ement dans $X(s_{2}^{2})\oplus X(s_{3}^{2})$ nous permet de 
ram\`ener la preuve à l'\'etude de l'espace $\ker(\phi_{1})\cap (X(s_{2}^{2})\oplus X(s_{3}^{2}))$.
Soit $(v_{s_{2}^{2}},v_{s_{3}^{2}})\in\ker(\phi_{1})\cap (X(s_{2}^{2})\oplus X(s_{3}^{2}))$.
Alors $\phi_{s_{3}^{1},s_{2}^{2}}(v_{s_{2}^{2}})=-\phi_{s_{3}^{1},s_{3}^{2}}(v_{s_{3}^{2}})$. 
Comme la multiplicité de $X(s_{\text{max}})$ en tant que sous-quotient de $X(s_{3}^{1})$ est égal à 1 et la suite de Johnson d\'efinit
un complexe diff\'erentiel, on obtient 
$\phi_{s_{3}^{1},s_{2}^{2}}(v_{s_{2}^{2}})=-\phi_{s_{3}^{1},s_{3}^{2}}(v_{s_{3}^{2}})\in \overline{X(s_{3}^{1})}$. 
Par cons\'equent, il existe $z\in X(s_{2}^{3})\cap \ker(\phi_{s_{1}^{2},s_{2}^{3}})$
tel que $\phi_{s_{2}^{2},s_{2}^{3}}(z)=v_{s_{2}^{2}}$ et 
$v_{s_{3}^{2}}-\phi_{s_{3}^{2},s_{2}^{3}}(z)\in K_{s_{3}^{2}}$. Pour 
$(v_{s_{2}^{2}},v_{s_{3}^{2}})$ on a donc l'\'egalit\'e,
\begin{align*}
(v_{s_{2}^{2}},v_{s_{3}^{2}})=(v_{s_{2}^{2}},\phi_{s_{3}^{2},s_{2}^{3}}(z))+(0,v_{s_{3}^{2}}-
\phi_{s_{3}^{2},s_{2}^{3}}(z)), 
\end{align*}
avec $(v_{s_{2}^{2}},\phi_{s_{3}^{2},s_{2}^{3}}(z))\in\text{im}(\phi_{2})$
et $v_{s_{3}^{2}}-\phi_{s_{3}^{2},s_{2}^{3}}(z)\in K_{s_{3}^{2}}$.
Ce qui nous permet de ram\`ener l'\'etude de l'espace 
$\ker(\phi_{1})\cap X(s_{2}^{2})\oplus X(s_{2}^{2})/\text{im}(\phi_{2,3})\cap X(s_{2}^{2}\oplus X(s_{2}^{2})$
\`a l'\'etude de,
\begin{align*}
\ker(\phi_{1})\cap X(s_{3}^{2})/\text{im}(\phi_{2})\cap X(s_{3}^{2}).
\end{align*}
Notons maintenant,
\begin{align*}
X(\beta):=\{(m_{1},-m_{1}),(m_{2},-m_{3}),(m_{3},-m_{4}),(m_{4},-m_{2})\},\\
X(\theta\beta):=\{(m_{1},-m_{1}),(m_{2},-m_{4}),(m_{3},-m_{2}),(m_{4},-m_{3})\}.
\end{align*}
Soit $\phi_{\beta,s_{3}^{2}}^{GL(6)}$, respectivement $\phi_{\theta\beta,s_{3}^{2}}^{GL(6)}$, le morphismes de Johnson entre,
\begin{align*}
\{(m_{2},-m_{4}),(m_{3},-m_{3}),(m_{4},-m_{2})\}\text{ et } \{(m_{2},-m_{3}),(m_{3},-m_{4}),(m_{4},-m_{2})\},
\end{align*}
respectivement,
\begin{align*}
\{(m_{2},-m_{4}),(m_{3},-m_{3}),(m_{4},-m_{2})\}\text{ et }\{(m_{2},-m_{4}),(m_{3},-m_{2}),(m_{4},-m_{3})\},
\end{align*}
qu'intervient dans la résolution de Johnson de $\overline{\{(m_{2},-m_{4}),(m_{3},-m_{3}),(m_{4},-m_{2})\}}$.

De la d\'efinition du morphisme $\phi_{\beta,s_{3}^{2}}^{J}$, respectivement $\phi_{\theta\beta,s_{3}^{2}}^{J}$, 
on obtient l'\'egalit\'e,
\begin{align*}
\ker(\phi_{\beta,s_{3}^{2}}^{J})=\delta(m_{1},-m_{1})\times \ker\phi_{\beta,s_{3}^{2}}^{GL(6)},
\end{align*}
respectivement,
\begin{align*}
\ker(\phi_{\theta\beta,s_{3}^{2}}^{J})=\delta(m_{1},-m_{1})\times \ker\phi_{\theta\beta,s_{3}^{2}}^{GL(6)}.
\end{align*}
Consid\'erons aussi les morphismes de Johnson suivants,
\begin{align*}
\phi_{s_{2}^{1},s_{3}^{2}}^{GL(6)}:\{(m_{2},-m_{4}),(m_{3},-m_{3}),&(m_{4},-m_{2})\}\rightarrow\\ 
&\{(m_{2},-m_{3}),(m_{3},-m_{2}),(m_{4},-m_{4})\},\\
\phi_{s_{3}^{1},s_{3}^{2}}^{GL(6)}:\{(m_{2},-m_{2}),(m_{3},-m_{4}),&(m_{4},-m_{3})\}\rightarrow\\ 
&\{(m_{2},-m_{4}),(m_{3},-m_{2}),(m_{4},-m_{3})\}.
\end{align*}
Du lemme \textbf{(\ref{lem:lemmeGL6})} on sait que,
\begin{align*}
\ker\phi_{s_{2}^{1},s_{3}^{2}}^{GL(6)}\cap\ker\phi_{s_{3}^{1},s_{3}^{2}}^{GL(6)}=
\ker\phi_{\beta,s_{3}^{2}}^{GL(6)}+\ker\phi_{\theta\beta,s_{3}^{2}}^{GL(6)}.
\end{align*}
Par cons\'equent, on peut \'ecrire,
\begin{align*}
\ker(\phi_{1,2})\cap X(s_{3}^{2})&=\ker(\phi_{s_{2}^{1},s_{2}^{2}})\cap \ker(\phi_{s_{3}^{1},s_{2}^{2}})\\
&=\ker(\phi_{\beta,s_{3}^{2}}^{J})+\ker(\phi_{\theta\beta,s_{3}^{2}}^{J}).  
\end{align*}
D'o\`u,
\begin{align}\label{eq:gauche}
\text{Tr}_{\theta}(\ker(\phi_{1})\cap X(s_{2}^{2}))&=\text{Tr}_{\theta}(\ker(\phi_{\beta,s_{3}^{2}}^{J})\cap\ker(\phi_{\theta\beta,s_{3}^{2}}^{J})).
\end{align}
Comme en plus,
\begin{align*}
\ker(\phi_{\beta,s_{3}^{2}}^{J})\cap\ker(\phi_{\theta\beta,s_{3}^{2}}^{J})&=
\ker\phi_{\beta,s_{3}^{2}}^{GL(6)}\cap\ker\phi_{\theta\beta,s_{3}^{2}}^{GL(6)}\times X(m_{4},-m_{4}),
\end{align*}
et de l'exactitude de la suite de Johnson pour $\overline{\{(m_{2},-m_{4}),(m_{3},-m_{3}),(m_{4},-m_{2})\}}$ on a, 
\begin{align*}
\ker\phi_{\beta,s_{3}^{2}}^{GL(6)}\cap\ker\phi_{\theta\beta,s_{3}^{2}}^{GL(6)}=\overline{X(\delta(m_{1},-m_{3})\otimes\delta(m_{2},-m_{2})\otimes\delta(m_{3},-m_{1}))}. 
\end{align*}
on en déduit que,
\begin{align}\label{eq:droite}
\ker(\phi_{\beta,s_{3}^{2}}^{J})\cap\ker(\phi_{\theta\beta,s_{3}^{2}}^{J})&=\overline{X(\delta(m_{1},-m_{3})\otimes\delta(m_{2},-m_{2})\otimes\delta(m_{3},-m_{1}))} 
\times\overline{X(m_{4},-m_{4})}
\nonumber\\
&=\overline{X(s_{3}^{2})}.
\end{align}
Ainsi, des \'egalit\'es \textbf{(\ref{eq:gauche})} et \textbf{(\ref{eq:droite})}, on obtient,
\begin{align*}
\text{Tr}_{\theta}(\ker(\phi_{1})\cap X(s_{3}^{2}))=\overline{X(s_{3}^{2})}. 
\end{align*}
Ce qui nous permet 
de conclure que,
\begin{align*}
\ker(\phi_{1})/\text{im}(\phi_{2})=0.
\end{align*}
Cran $\mathbf{1\rightarrow 0}$,
\begin{align*}
\oplus_{j=1}^{3}X(s_{j}^{1})\rightarrow X(s_{\text{max}}), 
\end{align*}
o\`u on rappelle que,
\begin{align*}
X(s_{\text{max}}):=\{(m_{1},-m_{1}),(m_{2},-m_{2}),(m_{3},-m_{3}),(m_{4},-m_{4})\}. 
\end{align*}
Pour chaque $1\leq j\leq 3$ on a, car la multiplicité de $X(s_{\text{max}})$ en tant que sous-quotient de $X(s_{j}^{1})$ est égal à 1, 
l'\'egalit\'e,
\begin{align*}
\ker(\phi_{s_{\text{max}},s_{j}^{1}})=\overline{X(s_{j}^{1})}.
\end{align*}
Comme en plus chaque $\overline{X(s_{j}^{1})},~1\leq j\leq 3$ 
est contenu dans $\text{im}(\phi_{1,2})$, on conclut,
\begin{align*}
\text{Tr}_{\theta}(\ker(\phi_{1})\cap X(s_{3}^{2})/\text{im}(\phi_{2})\cap X(s_{3}^{2}))=0.
\end{align*}
\bibliographystyle{alpha-fr}
\bibliography{BibliographieNAR}

\end{document}